\numberwithin{equation}{section} % Gleichungen nach Section nummerieren
\newtheorem{prop}{Proposition}[section]
\newaliascnt{lem}{prop} 
\newtheorem{lem}[lem]{Lemma}%[section]
\Crefname{lem}{Lemma}{Lemmas}
\newaliascnt{defi}{prop} 
 \newtheorem{defi}[defi]{Definition}%[section
\Crefname{defi}{Definition}{Definitions}
\newaliascnt{cor}{prop} 
 \newtheorem{cor}[cor]{Corollary}%[section]
\newaliascnt{remark}{prop} 
 \newtheorem{remark}[remark]{Remark}%[section]
\newaliascnt{thm}{prop} 
 \newtheorem{thm}[thm]{Theorem}%[section]
\newaliascnt{example}{prop} 
\def\equationautorefname~#1\null{%
  %Equation
  (#1)\null
}
\DeclareMathOperator{\diam}{diam} 
\newcommand{\R}{\ensuremath{\mathbb{R}}}
\newcommand{\N}{\ensuremath{\mathbb{N}}}
\renewcommand{\S}{\ensuremath{\mathbb{S}}}
\newcommand*\diff{\mathop{}\!\mathrm{d}}
\newcommand{\defeq}{\vcentcolon=}
\newcommand{\eqdef}{=\vcentcolon}
\newcommand{\CalA}{\ensuremath{\mathcal{A}}}
\newcommand{\CalW}{\ensuremath{\mathcal{W}}}
\newcommand{\CalI}{\ensuremath{\mathcal{I}}}
\newcommand{\abs}[1]{\ensuremath{\lvert #1\rvert}}
\newcommand{\Abs}[1]{\ensuremath{\left\lvert #1\right\rvert}}
\newcommand{\Norm}[2]{\ensuremath{\left\Vert #1 \right\Vert_{#2}}}
\newcommand{\norm}[2]{\ensuremath{\Vert #1 \Vert_{#2}}}
\newcommand{\dtzero}{\left.\frac{\diff}{\diff t}\right\vert_{t=0}}
\DeclareMathOperator{\CalV}{\ensuremath{\mathcal{V}}}
\newcommand{\FFF}{\color{black}}
\newcommand{\EEE}{\color{black}}
\begin{document}
\title{The Willmore flow with prescribed isoperimetric ratio}	
\author[F.~Rupp]{Fabian Rupp}
\address[F.~Rupp]{Faculty of Mathematics, University of Vienna, Oskar-Morgenstern-Platz 1, 1090 Vienna, Austria.}
\email{fabian.rupp@univie.ac.at}

\keywords{Willmore flow, Helfrich energy, isoperimetric ratio, {\L}ojasiewicz--Simon inequality, non-local geometric evolution equation.}
\subjclass[2020]{53E40 (primary), 35B40, 35K41 (secondary)}

\maketitle
%\begin{center}
%	$~^{a}$ Institute of Analysis, Ulm University,\\ Helmholtzstra\ss e 18, 89081 Ulm, Germany.\\ \texttt{fabian.rupp@uni-ulm.de}.
%\end{center}
\begin{abstract}
We introduce a non-local $L^2$-gradient flow for the Willmore energy of immersed surfaces which preserves the isoperimetric ratio. For spherical initial data with energy below an explicit threshold, we show long-time existence and convergence to a Helfrich immersion. This is in sharp contrast to the locally constrained flow, where finite time singularities occur. 
\end{abstract}

\section{Introduction and main results}

Finding the shape which encloses the maximal volume among surfaces of prescribed area is certainly one of the oldest and yet most prominent problems in mathematics and goes back to the legend of the foundation of Carthage. Since then generations of mathematicians have been studying \emph{isoperimetric problems}, \FFF aiming to find \EEE the best possible shape in all kinds of \FFF settings. \EEE It turns out that --- by the \emph{isoperimetric inequality} --- \FFF the optimal configuration in Euclidean space is given by a round sphere. \EEE %the optimal configuration is given by a round sphere.

Likewise, the round spheres are the absolute minimizers for the \emph{Willmore energy}, a functional measuring the bending of an immersed surface with various applications also beyond geometry, for instance in the study of biological membranes \cite{Canham,Helfrich}, general relativity \cite{Hawking}, nonlinear elasticity \cite{FJM02} and image restoration \cite{DroskeRumpf}.

Note that the round spheres describe the optimal shape in both situations. %, it is natural to connect both problems. 
In this article, we will study their relation using a gradient flow approach.

For an immersion $f\colon\Sigma\to\R^3$ of a closed oriented surface $\Sigma$, its Willmore energy is defined by
\begin{align}\label{eq:defWillmore}
	{\CalW}(f) \defeq \frac{1}{4}  \int_{\Sigma} \abs{H}^2\diff \mu.
\end{align}
Here $\mu=\mu_f$ denotes the area measure induced by the pull-back of the Euclidean metric $g_f \defeq f^*\langle \cdot, \cdot\rangle$, and $H  = H_f\defeq \langle \vec{H}_f, \nu_f\rangle$ denotes the (scalar) mean curvature with respect to $\nu=\nu_f \colon\Sigma\to \S^2$, the unique unit normal along $f$ induced by the chosen orientation on $\Sigma$, see \eqref{eq:def normal} below.
A related quantity is the \emph{umbilic Willmore energy}, given by 
\begin{align}
	\CalW_0(f) \defeq \int_{\Sigma}\abs{A^0}^2\diff\mu,
\end{align}
where $A^0$ denotes the trace-free part of the second fundamental form. As a consequence of the Gauss--Bonnet theorem, these two energies are equivalent from a variational point of view, since for a surface with fixed genus $g$, we have
\begin{align}\label{eq:WvstildeW}
	\CalW_0(f) =  2{\CalW}(f)-8\pi + 8\pi g.
\end{align}
%where $g$ denotes the genus of $\Sigma$.
Both energies are not only \emph{geometric}, i.e.\ invariant under diffeomorphisms on $\Sigma$, but --- remarkably --- also \emph{conformally invariant}, i.e.\ invariant with respect to smooth Möbius transformations of $\R^3$. By \FFF \cite[Theorem 7.2.2]{MR1261641}\EEE, we have $\CalW(f)\geq 4\pi$ with equality if and only if $\Sigma=\S^2$ and $f\colon \S^2\to\R^3$ parametrizes a round sphere. 
%Intuitively, the Willmore energy ${\CalW}$ measures the total bending of an immersed surface with various applications also beyond differential geometry, for instance in the study of biological membranes \cite{Canham,Helfrich}, general relativity \cite{Hawking}, nonlinear elasticity \cite{FJM02} and image restoration \cite{DroskeRumpf}.

\smallskip
The \emph{isoperimetric ratio} of an immersion $f\colon\Sigma\to\R^3$ is defined as the quotient
%Another very classical geometric quantity associated to an immersion $f\colon\Sigma\to\R^3$ is its \emph{isoperimetric ratio}. This is defined as the quotient
\begin{align}
	\CalI(f) &\defeq 36\pi\frac{\CalV(f)^2}{\CalA(f)^3},\quad\text{where } \label{eq:defI}\\
	\CalA(f) &\defeq \int_{\Sigma}\diff \mu\quad\text{and}\quad \CalV(f)\defeq  -\frac{1}{3} \int_\Sigma \langle f, \nu\rangle\diff \mu
\end{align}
denote the area and the \FFF algebraic \EEE volume enclosed by $f(\Sigma)$, respectively. Here,
% $\nu\colon\Sigma\to\R^3$ denotes the induced unit normal along $f$ and 
the normalizing constant is chosen such that %if $f$ is an embedding, 
by the isoperimetric inequality we always have $\CalI(f)\eqdef\sigma\in [0,1]$ with $\sigma=1$ if and only if $\Sigma=\S^2$ and $f\colon\S^2\to\R^3$ parametrizes a round sphere. 
Critical points of the isoperimetric ratio --- or equivalently, critical points of the volume functional with prescribed area --- are precisely the \emph{CMC-surfaces}, i.e.\ the surfaces with constant mean curvature, which form an important generalization of minimal surfaces and naturally arise in the modeling of soap bubbles. %However, critical points for the isoperimetric ratio will turn out to be rather inconvenient for our flow, see \Cref{rem:CMC} below.

The problem of minimizing the Willmore energy among all immersions of a genus $g$ surface $\Sigma_g$ with prescribed isoperimetric ratio, i.e.\ the minimization problem \begin{align}\label{eq:defbeta}
	\beta_g(\sigma) \defeq \inf\left\lbrace \CalW(f) \mid f\colon \Sigma_g \to \R^3 \text{ immersion with } \CalI(f)=\sigma\right\rbrace,
\end{align}
naturally arises in mathematical biology in the \emph{Canham--Helfrich model} \cite{Canham,Helfrich} with zero spontaneous curvature and
has already been studied mathematically in  \cite{Schygulla,KellerMondinoRiviere,MondinoScharrer2}.
%Here $\Sigma_g$ denotes the abstract orientable and connected surface with genus $g\in \N_0$.
While the genus zero case was solved in \cite{Schygulla}, the results in \cite{KellerMondinoRiviere,MondinoScharrer2} combined with recent findings in \cite{Scharrer} and \cite{KusnerMcGrath} show that the infimum in \eqref{eq:defbeta} is always attained for any $g\in \N_0$ and $\sigma\in (0,1)$; and satisfies $\beta_g(\sigma)<8\pi$.
%Thanks to \cite[Theorem 1.2]{MondinoScharrer2}, if for some $\sigma\in (0,1)$ we have
%\begin{align}\label{eq:MondinoScharrerAssumption}
%	\beta_g(\sigma)<8\pi,
%\end{align}
%then a minimizer in \eqref{eq:defbeta} exists. For $g=0$, \eqref{eq:MondinoScharrerAssumption} is satisfied by \cite[Lemma 1]{Schygulla}. In the case of higher genus, \eqref{eq:MondinoScharrerAssumption} has been recently proven for $g=1$ in \cite [Corollary 1.2]{Scharrer} and for general $g\in \N_0$ in \cite[Theorem 1.2]{KusnerMcGrath}. Hence the infimum in \eqref{eq:defbeta} is always attained and strictly smaller then $8\pi$.
The energy threshold $8\pi$ also plays an important role in the analysis of the Willmore energy, since by the famous Li--Yau inequality \cite{LiYau}, any immersion $f$ of a compact surface with $\CalW(f)<8\pi$ has to be embedded. %\todo{Referenz Kusner}

A sufficiently smooth minimizer in \eqref{eq:defbeta} is a \emph{Helfrich immersion}, i.e.\ a solution to the Euler--Lagrange equation
\begin{align}\label{eq:Helfrich eq}
	\Delta H + \abs{A^0}^2H - \lambda_1 H -\lambda_2 = 0\quad \text{ for some }\lambda_1,\lambda_2\in \R,
\end{align}
where $\Delta=\Delta_{g_f}$ denotes the Laplace--Beltrami operator on $(\Sigma, g_f)$.
In \cite{McCoyWheelerClassification}, solutions  to \eqref{eq:Helfrich eq} with small umbilic Willmore energy have been classified, depending on the sign of the \emph{Lagrange-multipliers} $\lambda_1$ and $\lambda_2$. 
%Remarkably, their result does not require $\Sigma$ to be compact.
We observe that for $\lambda_1, \lambda_2\in \R$ fixed, \eqref{eq:Helfrich eq} is also the Euler--Lagrange equation of the \emph{Helfrich energy} given by
\begin{align}\label{eq:defHelrich}
	\mathcal{H}_{\lambda_1, \lambda_2}(f)\defeq \CalW_0(f)+\lambda_1\CalA(f)+\lambda_2\CalV(f),
\end{align}
where the energy either penalizes or favors large area or volume, depending on the sign of $\lambda_1$ and $\lambda_2$, respectively.
\smallskip 

The $L^2$-gradient flow of the Willmore energy was introduced and studied by Kuwert and Schätzle in their seminal works \cite{KSGF,KSSI,KSRemovability}.
\FFF 
Their methods are very robust and allow to handle also other situations, such as the  \emph{surface diffusion flow} \cite{McCoyWheelerWilliams,MR2898774} and the Willmore flow of \emph{tori of revolution} \cite{DMSS20}. \EEE
%Adapting their methods, various authors have extended their results to related geometric evolution equations, including the \emph{surface diffusion flow} \cite{McCoyWheelerWilliams,MR2898774} and the Willmore flow of \emph{tori of revolution} \cite{DMSS20}.
The locally constrained \emph{Helfrich flow}, i.e.\ the $L^2$-gradient flow for the energy \eqref{eq:defHelrich}, and its asymptotic behavior have been studied in \cite{McCoyWheeler,BlattHelfrich}, where it was shown that finite time singularities must occur below a certain energy threshold. However, this flow does not preserve the isoperimetric ratio.

The goal of this article is to discuss a dynamic version of the minimization problem \eqref{eq:defbeta}. To this end, we introduce the \emph{Willmore flow with prescribed isoperimetric ratio}, which decreases $\CalW$ as fast as possible while keeping $\CalI(f)\equiv\CalI(f_0)=\sigma$ fixed. This yields the evolution equation
\begin{align}\label{eq:IsoWF}
	\partial_t f &= \left[-\Delta H - \abs{A^{0}}^2H +  \lambda
	\left(\frac{3}{\CalA(f)} H - \frac{2}{\CalV(f)}\right)\right]\nu,
\end{align}
where the \emph{Lagrange multiplier} $\lambda \defeq \lambda(t)\defeq \lambda(f_t)$ depends on $f_t \defeq f(t,\cdot)$ and is given by
\begin{align}\label{eq:deflambda}
	\lambda(f) \defeq \frac{\int \Big(\Delta H +\abs{A^{0}}^2H\Big)\Big(\frac{3}{\CalA(f)} H - \frac{2}{\CalV(f)}\Big)\diff \mu}{\int\abs{ \frac{3}{\CalA(f)} H - \frac{2}{\CalV(f)}}^2\diff \mu}.
\end{align}
	In \eqref{eq:dtI=0} below we will justify the particular choice of $\lambda$, which yields that $\CalI$ is actually preserved along a solution of \eqref{eq:IsoWF}--\eqref{eq:deflambda}.
\begin{defi}\label{def:IsoWF}
	Let $\sigma\in (0,1), T>0$ and let $\Sigma_g$ denote a connected, oriented and closed surface with genus $g\in \N_0$. A smooth family of immersions $f\colon[0,T)\times \Sigma_g\to\R^3$ satisfying \eqref{eq:IsoWF} with $\lambda$ as in \eqref{eq:deflambda} and  $\CalI(f)\equiv \sigma$ is called a \emph{$\sigma$-isoperimetric Willmore flow} with \emph{initial datum} $f_0\defeq f(0,\cdot)$.
\end{defi}
Stationary solutions of the flow \eqref{eq:IsoWF}--\eqref{eq:deflambda} are solutions to the Helfrich equation \eqref{eq:Helfrich eq} for $\lambda_1 = \frac{3}{\CalA(f)}\lambda$ and $ \lambda_2 = - \frac{2}{\CalV(f)}\lambda$. Conversely, any Helfrich immersion is also a stationary solution to \eqref{eq:IsoWF}--\eqref{eq:deflambda}, see \Cref{lem:Helfrich vs stationary} below.

However, as the Lagrange multiplier $\lambda$ defined in \eqref{eq:deflambda} depends on the solution, the isoperimetric flow \eqref{eq:IsoWF} substantially differs from the $L^2$-gradient flow of the Helfrich energy \eqref{eq:defHelrich}, where the parameters $\lambda_1$ and $\lambda_2$ are fixed numbers and chosen \emph{a priori}. On the analytic side, the integral nature of the Lagrange multiplier makes the evolution equation \eqref{eq:IsoWF} a \emph{non-local}, {quasilinear}, {degenerate parabolic} PDE of 4\textsuperscript{th} order. Also geometrically, the constraint $\CalI(f)\equiv\sigma$ causes new difficulties, as we cannot control the area and the volume independently along the flow (as in \cite{BlattHelfrich}, for instance), but only the isoperimetric ratio $\CalI$. 

The Willmore flow with a constraint on either the area or the enclosed volume has been studied in \cite{Jachan} and a recent article by the author \cite{RuppVolumePreserving}. However, the situation here is fundamentally different and several new challenges arise. 

First, if only the area or the volume is prescribed (and nonzero), constrained critical points of the corresponding variational problem are in fact \emph{Willmore immersions,} i.e.\ solutions of \eqref{eq:Helfrich eq} with $\lambda_1=\lambda_2=0$, due to the scaling invariance of the Willmore energy. Although still an active field of research, the classification of these Willmore immersions is much better understood than that of general solutions of (1.4) and a crucial ingredient in classifying the blow-ups in \cite{RuppVolumePreserving}.
Second, in \cite{RuppVolumePreserving} the different scaling of the energy and constraint has been used to represent the Lagrange multiplier in a way that allows for good \emph{a priori estimates.} This neat trick is clearly not available for the flow \eqref{eq:IsoWF}--\eqref{eq:deflambda}.
Third, unlike in \cite{RuppVolumePreserving}, the Lagrange multiplier has a much more complicated algebraic structure and cannot be treated as a lower order term.

These obstructions are the reason for a new energy threshold in the following main result on global existence and convergence.

%In contrast to these works, the constraint functional $\CalI$ in the isoperimetric Willmore flow shares the scaling invariance of the Willmore energy, making it impossible to exploit the scaling properties to reduce the order of the Lagrange multiplier.
%
%Nonetheless, we can prove the following

\begin{thm}\label{thm:convergence main}
Let $f_0\colon \S^2\to\R^3$ be a smooth immersion with $\CalI(f_0)=\sigma\in (0,1)$ and such that $\CalW(f_0)\leq \min\left\{\frac{4\pi}{\sigma},8\pi\right\}$. Then there exists a unique $\sigma$-isoperimetric Willmore flow with initial datum $f_0$. This flow exists for all times and, as $t\to\infty$, it converges smoothly after reparametrization to a Helfrich immersion $f_{\infty}$ with $\CalI(f_\infty)=\sigma$ solving \eqref{eq:Helfrich eq} with $\lambda_1\neq 0$ and $\lambda_2 \neq 0$.
\end{thm}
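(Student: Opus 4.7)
My plan is to combine the Kuwert--Schätzle concentration-compactness strategy for fourth-order curvature flows with a Lojasiewicz--Simon argument adapted to the non-local, scale-invariant constraint $\CalI=\sigma$. For short-time existence, the system \eqref{eq:IsoWF}--\eqref{eq:deflambda} is non-local, quasilinear and degenerately parabolic of order four. After the standard De~Turck reduction to a strictly parabolic system, one can solve the resulting local quasilinear problem by maximal $L^p$-regularity. The non-locality through $\lambda$ causes no trouble, since $\lambda$ depends smoothly on $f$ whenever its denominator is nonzero; and that denominator vanishes precisely on CMC immersions, which---under the embeddedness furnished by Li--Yau once $\CalW<8\pi$---must be round spheres by Alexandrov's theorem, hence have $\CalI=1$. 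A direct computation yields $\frac{d}{dt}\CalI(f)\equiv 0$ and the energy identity $\frac{d}{dt}\CalW(f)=-\|\partial_t f\|_{L^2(d\mu)}^2\leq 0$, so $\CalW(f(t))\leq\CalW(f_0)\leq\min\{4\pi/\sigma,8\pi\}$ and $\CalI(f(t))\equiv\sigma$ for as long as the flow exists.

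\textbf{A priori bound on the Lagrange multiplier.} Since $\sigma<1$, the class of immersions satisfying $\CalW\leq\CalW(f_0)$ and $\CalI=\sigma$ contains no round spheres; invoking a geometric compactness argument (modulo Möbius transformations and reparametrizations) I would extract a uniform positive lower bound on $\|\tfrac{3H}{\CalA(f)}-\tfrac{2}{\CalV(f)}\|_{L^2(d\mu)}$. Combined with Cauchy--Schwarz applied to the numerator of \eqref{eq:deflambda}, this yields an estimate $|\lambda(f(t))|\leq C(\sigma,f_0)\|\partial_t f(t)\|_{L^2(d\mu)}$, which is the crucial quantitative input allowing the Lagrange term to be treated as a lower-order perturbation in the fourth-order estimates.

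\textbf{Long-time existence.} Following \cite{KSGF,KSSI}, finite-time singularities of the Willmore flow are detected by concentration of $\int|A|^2\,d\mu$ on small geodesic balls. The extra term $\lambda(\tfrac{3H}{\CalA(f)}-\tfrac{2}{\CalV(f)})\nu$ produces additional integrands in the evolution of the local curvature integrals; by the previous paragraph these are dominated by $\|\partial_t f\|_{L^2}$, which lies in $L^2(dt)$ via the energy identity, so the propagation of smallness and higher-order interior estimates of Kuwert--Schätzle carry over. The sub-threshold assumption $\CalW<8\pi$ rules out curvature concentration in finite time. To prevent diameter blow-up or collapse along the infinite time interval, I would exploit the scaling invariance of both $\CalW$ and $\CalI$ by rescaling to unit area---a geometric renormalization combined with a time reparametrization---and apply Langer--Simon compactness under the uniform curvature integral bounds.

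\textbf{Convergence and identification of the limit.} The uniform higher-order estimates yield smooth subsequential convergence (after translation and diffeomorphism) to an immersion $f_\infty\colon\S^2\to\R^3$ with $\CalI(f_\infty)=\sigma$, and $\int_0^\infty\|\partial_t f\|_{L^2}^2\,dt<\infty$ together with the Lagrange multiplier bound forces $f_\infty$ to satisfy \eqref{eq:Helfrich eq} with $\lambda_1=3\lambda_\infty/\CalA(f_\infty)$ and $\lambda_2=-2\lambda_\infty/\CalV(f_\infty)$. To upgrade subsequential to full convergence I would prove a Lojasiewicz--Simon inequality for $\CalW$ restricted to the submanifold $\{\CalI=\sigma\}$ near $f_\infty$, following the abstract Chill/Haraux--Jendoubi scheme combined with a Fredholm analysis of the linearized Willmore operator and a careful accounting of the invariance group. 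Finally, $\lambda_\infty=0$ would make $f_\infty$ a genus-zero Willmore immersion with $\CalW<8\pi$, hence a round sphere by Bryant's classification, contradicting $\sigma<1$; therefore $\lambda_1,\lambda_2\neq 0$. The principal analytic obstacle is the long-time existence step: the scale invariance of $\CalI$ prevents any reduction of the order of $\lambda$ by rescaling, so $\lambda$ genuinely involves the highest-order curvature quantities $\Delta H$ and $|A^0|^2H$, and reconciling its non-local character with the Kuwert--Schätzle localization arguments is the most delicate point of the proof.
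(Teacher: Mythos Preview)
The central gap is your control of the Lagrange multiplier. The claimed estimate $|\lambda(f(t))|\leq C(\sigma,f_0)\|\partial_t f(t)\|_{L^2(\diff\mu)}$ is false: $\partial_t f$ is the $L^2$-orthogonal projection of $-\nabla\CalW_0(f)$ onto $(\nabla\CalI(f))^{\perp}$, so the right-hand side vanishes at every Helfrich immersion, whereas $\lambda$ does not---indeed the theorem itself asserts $\lambda_\infty\neq 0$. Cauchy--Schwarz on the numerator of \eqref{eq:deflambda} only bounds $|\lambda|$ by $\|\nabla\CalW_0(f)\|_{L^2}$ divided by the denominator, and $\|\nabla\CalW_0(f)\|_{L^2}$ contains $\Delta H$ at top order; this is \emph{not} dominated by $\|\partial_t f\|_{L^2}$, so the Lagrange term cannot be treated as a lower-order perturbation and your long-time existence step does not close. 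Relatedly, your argument never uses the hypothesis $\CalW(f_0)\leq 4\pi/\sigma$; it is not redundant with the $8\pi$ bound.

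The paper proceeds differently. Testing \eqref{eq:IsoWF} with $\nu$ and integrating kills $\Delta H$ by the divergence theorem and yields
\[
|\lambda|\leq C\sqrt{\CalA(f)}\,\Bigl|\int\langle\partial_t f,\nu\rangle\diff\mu+\int|A^0|^2 H\diff\mu\Bigr|,
\]
where the constant is finite precisely because $\CalW(f_0)<4\pi/\sigma$ gives an explicit lower bound on the denominator of \eqref{eq:deflambda} via the triangle inequality (no soft compactness argument is needed, nor would one modulo M\"obius transformations be available, since $\CalI$ is not M\"obius invariant). The residual $\int|A^0|^2 H$ is then interpolated against the good $\int|A|^6\gamma^4$ term in the localized energy inequality, producing an \emph{integral} bound on $\int_0^\tau\lambda^2/\CalA(f)^2\diff t$ that suffices for a scale-invariant lifespan estimate. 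The subsequent blow-up analysis shows that if the concentration limit $\hat f\colon\hat\Sigma\to\R^3$ is noncompact, the rescaled areas must diverge, which forces the limiting multipliers to zero so that $\hat f$ is genuinely Willmore; only then does Bryant's classification combined with inversion (as in \cite{KSRemovability}) produce the contradiction you anticipate.
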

This shows a fundamentally different behavior of the isoperimetric Willmore flow and the Helfrich flow, where finite time singularities occur, cf.\ \cite{BlattHelfrich,McCoyWheeler}. Consequently, despite its new analytic challenges, the introduction of the non-local Lagrange multiplier has a \emph{regularizing} effect on the gradient flow, see also \cite{Huisken} for a related result for the mean curvature flow.

\FFF The $\frac{4\pi}{\sigma}$-threshold in \Cref{thm:convergence main} is motivated by the following simple application of the triangle inequality in $L^2(\diff\mu)$. With $\mathcal{I}(f)=\sigma$ and \eqref{eq:defI}, %for any immersion $f\colon \Sigma_g\to\R^3$ with $\CalI(f)=\sigma\in (0,1)$ and $\CalW(f)<\frac{4\pi}{\sigma}$ 
we have
\begin{align}\label{eq:triangle estimate}
		\int_{\Sigma}\Abs{\frac{3}{\CalA(f)} H - \frac{2}{\CalV(f)}}^2\diff \mu \geq \frac{36}{\CalA(f)^2}\left(\sqrt{\frac{4\pi}{\sigma}}- \sqrt{\CalW(f_0)}\right)^2.
\end{align}
This estimate bounds the denominator in \eqref{eq:deflambda} from below if $\mathcal{W}(f_0)<\frac{4\pi}{\sigma}$. Moreover, it allows to control the Lagrange multiplier in the crucial estimates by essentially lower order quantities, see \Cref{sec:lambda}.  \EEE

%Note that
%the $8\pi$-threshold in \Cref{thm:convergence main} and the Li--Yau inequality \cite{LiYau} can be used to ensure that the flow is embedded. 
We highlight that the assumption in \Cref{thm:convergence main} is not an implicit smallness of the initial energy, cf.\ \cite{KSSI,MR2898774}, but the threshold is explicitly given, although very little is known about minimizers and critical points of \eqref{eq:defbeta}. Moreover, as $\sigma\nearrow 1$, the interval of admissible initial energies in \Cref{thm:convergence main} becomes arbitrarily small. This seems plausible, since if $\sigma=1$, $f_0$ is a round sphere and the denominator in \eqref{eq:deflambda} vanishes. Thus, it is a priori unclear whether there exists an admissible immersion $f_0$ in \Cref{thm:convergence main} if $\sigma\in (\frac12, 1)$ --- in fact, this is equivalent to the condition $\beta_0(\sigma)\leq \frac{4\pi}{\sigma}$. In \Cref{thm:beta0<4pi/sigma} below, we will prove $\beta_0(\sigma)< \frac{4\pi}{\sigma}$ for $\sigma\in (0,1)$, which is asymptotically sharp as $\sigma\nearrow 1$, and consequently the existence of a suitable $f_0$ follows.
%Note that for $\sigma\nearrow 1$, the interval of admissible initial energies in \Cref{thm:convergence main} becomes arbitrarily small since $\CalW\geq 4\pi$. This seems plausible, since if $\sigma=1$, $f$ has to parametrize a round sphere, hence the denominator in \eqref{eq:deflambda} vanishes, making the flow equation singular. However, since the round spheres are also the absolute minimizers of the Willmore energy, an isoperimetric Willmore flow with $\sigma=1$ is not particularly interesting.
%
%% Interpreting \eqref{eq:IsoWF} as a gradient flow on the submanifold $\CalI^{-1}(\{\sigma\})$ of a certain Hilbert space, cf.\ \cite[Section 2.2]{RuppVolumePreserving}, this effect can also be explained by the fact that $\CalI'(f)=0$ if $\CalI(f)\equiv \sigma=1$, which corresponds exactly to a singular point on the manifold. 
%A priori, it is entirely unclear whether there exists an immersion $f_0\colon\S^2\to\R^3$ satisfying the assumption $\CalW(f_0)\leq \frac{4\pi}{\sigma}$ for $\sigma\in (\frac{1}{2},1)$ --- in fact, this is equivalent to the condition $\beta_0(\sigma)\leq \frac{4\pi}{\sigma}$. In \Cref{thm:beta0<4pi/sigma} below, we will prove $\beta_0(\sigma)< \frac{4\pi}{\sigma}$ for $\sigma\in (0,1)$, which is asymptotically sharp as $\sigma\nearrow 1$, and consequently the existence of a suitable $f_0$ follows. 
We also point out that it is unknown if the energy threshold in \Cref{thm:convergence main} is optimal as it is for the classical Willmore flow \cite{Blatt,DMSS20}.

The proof of \Cref{thm:convergence main} is based on the methods developed by Kuwert--Sch\"atzle for the Willmore flow \cite{KSGF,KSSI,KSRemovability}. Under a non-concentration assumption on the curvature, we use localized energy estimates to control the evolution, see \Cref{sec:energy estimates} below. However, as in \cite{RuppVolumePreserving}, these estimates depend on certain $L^p$-type bounds on $\lambda$. The key ingredient of this paper is that for locally small curvature and if the initial energy is below the threshold of \Cref{thm:convergence main}, the Lagrange multiplier can be absorbed in the estimates, see \Cref{sec:lambda}, in particular \Cref{lem: lambda nenner bound,lem:lambda new}. This is an essential observation, which we can use to prove a lower bound on the lifespan and to construct a blow-up limit in the spirit of \cite{KSSI}, see \Cref{sec:blowup}. Using the control over the Lagrange multiplier in the energy regime of \Cref{thm:convergence main}, we deduce a crucial \emph{rigidity result:} either the blowup is a \emph{compact} Helfrich immersion or a \emph{Willmore immersion,} see \Cref{lem:blowup existence}. In the first case, we conclude global existence and convergence by an argument based on the {\L}ojasiewicz--Simon inequality in the spirit of \cite{CFS09}, combined with recent progress on this inequality in the presence of constraints \cite{Rupp}. Due to the rigidity of the blow-up, we can follow the inversion strategy in \cite{KSRemovability} relying on the classification of compact Willmore spheres \cite{Bryant1984} to exclude the second case.

\FFF 
This last step is also where we crucially make use of the assumption $\Sigma_g=\S^2$. 
In the case of higher genus, a classification result for Willmore surfaces as in [5] is currently lacking. Even if such a classification were available, a precise comprehension of the behavior under inversion would be indispensable to extend the argument beyond the spherical case.
%In the case of higher genus, we do not yet have a classification result for Willmore surfaces as in \cite{Bryant1984}, and even if we did, a precise understanding of the behavior under inversion would be needed to extend the argument from the spherical case.
However, since \EEE the blow-up analysis is also available if $g\geq 1$, we establish the following remarkable dichotomy result.	
\begin{cor}\label{cor}
	Let $\sigma\in (0,1)$, let $\Sigma$ be a closed, oriented and connected surface and suppose that $f\colon [0,T)\times \Sigma\to\R^3$ is a maximal $\sigma$-isoperimetric Willmore flow such that $\CalW(f_0)<\frac{4\pi}{\sigma}$. Then there exist $\hat{c}\in (0,1)$, $(t_j)_{j\in \N}\subset [0,T), t_j\nearrow T, (r_j)_{\in \N}\subset (0,\infty)$ and $(x_j)_{j\in \N}\subset \R^3$ such that the sequence of immersions
	\begin{align}
		\hat{f}_j\defeq r_j^{-1} \left(f(t_j+r_j^4\hat{c}, \cdot)-x_j\right)
	\end{align}
	converges, as $j\to\infty$, smoothly on compact subsets of $\R^3$ after reparametrization to a proper Helfrich immersion $\hat{f}\colon\hat{\Sigma}\to\R^3$ where $\hat{\Sigma}\neq\emptyset$ is a complete surface without boundary. Moreover
	\begin{enumerate}[(a)]
		\item if $\hat{\Sigma}$ is compact, then $T=\infty$ and, as $t\to\infty$, the flow $f$ converges smoothly after reparametrization to a Helfrich immersion $f_\infty$ as $t\to\infty$.
		\item if $\hat{\Sigma}$ is not compact, then $\hat{f}$ is a Willmore immersion.
	\end{enumerate}
\end{cor}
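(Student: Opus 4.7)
I would carry out a Kuwert--Schätzle blow-up analysis \cite{KSSI,KSGF}, adapted to the non-local flow \eqref{eq:IsoWF}. The key ingredients, which should be established earlier in the paper, are: (i) the dissipation identity $\tfrac{\diff}{\diff t}\CalW(f) = -\int\abs{\partial_t f}^2 \diff\mu$ (a consequence of the constraint-preserving choice of $\lambda$); (ii) a uniform bound on $\lambda$ along the flow, guaranteed by the threshold $\CalW(f_0) < \frac{4\pi}{\sigma}$ preventing degeneration of the denominator in \eqref{eq:deflambda}; and (iii) the scale invariance of $\CalW$, $\CalI$ and hence of $\lambda$.

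\textbf{Blow-up construction.} Fix $\eps_0 > 0$ small enough that the $\eps$-regularity theory applies to \eqref{eq:IsoWF}. If the curvature never concentrates at scale $\eps_0$ as $t \nearrow T$, interior parabolic regularity extends $f$ smoothly past $T$, contradicting maximality (or, when $T=\infty$, directly yielding subsequential convergence). Otherwise, select $t_j \nearrow T$, $x_j \in \R^3$, $r_j > 0$ with $\int_{B_{r_j}(x_j)} \abs{A}^2 \diff \mu_{t_j} \geq \eps_0$ and $r_j$ chosen minimally on the parabolic cylinder of time-length $r_j^4$. By scale invariance, the rescaled family
\begin{align*}
\tilde{f}_j(s,p) \defeq r_j^{-1}\bigl(f(t_j + r_j^4 s, p) - x_j\bigr), \quad s \in [0,1],
\end{align*}
solves \eqref{eq:IsoWF} with $\CalA, \CalV$ replaced by $\CalA(\tilde{f}_j), \CalV(\tilde{f}_j)$ and $\lambda$ unchanged. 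The dissipation identity yields
\begin{align*}
\int_0^1 \int_{\Sigma} \abs{\partial_s \tilde{f}_j}^2 \diff\mu_{\tilde{f}_j}\, \diff s = \CalW(f(t_j,\cdot)) - \CalW(f(t_j + r_j^4,\cdot)) \longrightarrow 0,
\end{align*}
so by the mean value theorem there exist $\hat{c}_j \in (0,1)$ with $\int \abs{\partial_s \tilde{f}_j}^2 \diff\mu|_{s=\hat{c}_j} \to 0$. Kuwert--Schätzle-type interior estimates provide uniform bounds on all covariant derivatives of the curvature on compact subsets of $\R^3$; after extracting a subsequence with $\hat{c}_j \to \hat{c} \in (0,1)$ and reparametrizing, $\hat{f}_j \defeq \tilde{f}_j(\hat{c}_j, \cdot)$ converges smoothly on compacts to a proper immersion $\hat{f}\colon \hat{\Sigma} \to \R^3$ of a surface without boundary, with $\hat{\Sigma}\neq\emptyset$ by the curvature concentration. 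Boundedness of $\lambda_j$ together with the mean-value choice $\partial_s \tilde{f}_j|_{s=\hat{c}_j} \to 0$ shows that $\hat{f}$ is stationary, i.e.\ satisfies \eqref{eq:Helfrich eq} with $\hat{\lambda}_1 = \lim \tfrac{3\lambda_j}{\CalA(\tilde{f}_j)}$ and $\hat{\lambda}_2 = -\lim \tfrac{2\lambda_j}{\CalV(\tilde{f}_j)}$.

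\textbf{Dichotomy and main obstacle.} If $\hat{\Sigma}$ is non-compact, then $r_j \to 0$; since $\CalA(f(t_j,\cdot))$ and $\abs{\CalV(f(t_j,\cdot))}$ remain bounded below along the flow (by $\CalI\equiv\sigma$ together with an a priori area lower bound available from the Willmore bound), the rescaled quantities $\CalA(\tilde{f}_j)=r_j^{-2}\CalA(f(t_j,\cdot))$ and $\abs{\CalV(\tilde{f}_j)}$ diverge, forcing $\hat{\lambda}_1 = \hat{\lambda}_2 = 0$; hence $\hat{f}$ is Willmore, proving (b). If $\hat{\Sigma}$ is compact, $\hat{f}$ is a compact Helfrich immersion; the assumption $H_{\hat{f}} \not\equiv \text{const}$ excludes $\hat{f}$ from being a round sphere, the only compact blow-up profile compatible with $r_j \to 0$ in this setting. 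Hence $r_j \geq r_0 > 0$, global curvature bounds extend up to $t = T$, forcing $T = \infty$, and smooth convergence of $f$ (up to reparametrization) as $t \to \infty$ then follows from the Lojasiewicz--Simon inequality as in the proof of \Cref{thm:convergence main}. The principal difficulty throughout is controlling the non-local Lagrange multiplier: uniform bounds on $\lambda$ along the flow and its correct limiting behavior under rescaling depend on both the energy threshold (keeping the denominator of \eqref{eq:deflambda} quantitatively nonzero) and the scale invariance of $\CalI$ (so that $\lambda$ is not magnified by the zoom-in). Additional care is needed in (b) to certify the divergence of $\CalA(\tilde{f}_j), \abs{\CalV(\tilde{f}_j)}$, and in (a) to rule out a sphere blow-up from the assumption $H_{\hat{f}} \not\equiv \text{const}$.
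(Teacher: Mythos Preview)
Your overall architecture is right, and part (a) via the \L ojasiewicz--Simon inequality is essentially what the paper does. But there are two genuine gaps.

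\textbf{Control of $\lambda$.} You assume a uniform pointwise bound on $\lambda$ along the flow, derived from the energy threshold. The paper does not have this, and it is not clear such a bound exists: the threshold $\CalW(f_0)<\tfrac{4\pi}{\sigma}$ only controls the \emph{denominator} of \eqref{eq:deflambda} from below in terms of $\CalA(f)$, not the numerator. What the paper actually proves (\Cref{lem:lambda new}, \Cref{lem:lambdaBound}) is a bound on the time integral $\int_0^\tau \tfrac{\lambda^2}{\CalA(f)^2}\,\diff t$, and this is the input to the interior estimates (\Cref{prop:high ord small conc}). Pointwise bounds on $\tfrac{\lambda}{\CalA}$ and $\tfrac{\lambda}{\CalV}$ only become available \emph{after} the curvature is controlled, see \eqref{eq:lambda/A+lambda/Vestimate}.

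\textbf{The non-compact case (b).} Your argument here breaks down. You claim that if $\hat{\Sigma}$ is non-compact then $r_j\to 0$, and that $\CalA(f(t_j,\cdot))$ is bounded below along the flow. Neither is justified: the Willmore energy is scale invariant and gives no area lower bound, and when $T=\infty$ the concentration limit can be a blow-down ($r_j\to\infty$) or a translation limit. The paper instead argues via \Cref{lem:bounded area compact}: if $\sup_j \CalA(\hat{f}_j)<\infty$, then Simon's diameter estimate $\diam \hat{f}_j(\Sigma)\leq C\sqrt{\CalA(\hat{f}_j)\,\CalW(\hat{f}_j)}$ confines all $\hat{f}_j(\Sigma)$ to a fixed ball (since they all meet $B_1(0)$), forcing $\hat{\Sigma}$ compact. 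Contrapositively, non-compact $\hat{\Sigma}$ forces $\CalA(\hat{f}_j)\to\infty$ along a subsequence. From there, showing $\hat{\lambda}_1=0$ is not automatic either: the paper uses the refined estimate of \Cref{lem:lambda new}, namely $\abs{\lambda}\leq C\sqrt{\CalA(f)}\,\bigl|\int\langle\partial_t f,\nu\rangle\,\diff\mu+\int\abs{A^0}^2 H\,\diff\mu\bigr|$, together with the dissipation and curvature bounds to deduce $\int_\xi^{\hat{c}}\bigl|\tfrac{\lambda_j}{\CalA(f_j)}\bigr|^2\diff t\to 0$, cf.\ \eqref{eq:lambda/A nabla H to zero}. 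Then $\hat{\lambda}_2=0$ follows since $\bigl|\tfrac{\lambda}{\CalV}\bigr|=C(\sigma)\bigl|\tfrac{\lambda}{\CalA}\bigr|\CalA^{-1/2}$ by $\CalI\equiv\sigma$.
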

Hence, under the above assumptions, in the singular case (b) the influence of the (non-local) constraint vanishes after rescaling as $t\to\infty$ and the purely local term in \eqref{eq:IsoWF}, coming from the Willmore functional, dominates.

We now outline the structure of this article. After a brief review of the most relevant analytic and geometric background in \Cref{sec:prelm}, we start our analysis by carefully computing and estimating a localized version of the energy decay in \Cref{sec:energy estimates}. In \Cref{sec:lambda}, we control the Lagrange multiplier in the energy regime of \Cref{thm:convergence main} which then enables us to construct a blow-up limit in \Cref{sec:blowup}. Finally, in \Cref{sec:convergence} we prove our convergence result, \Cref{thm:convergence main}, and \Cref{cor} before we show \Cref{thm:beta0<4pi/sigma} in \Cref{sec:beta}, yielding that the set of admissible initial data in \Cref{thm:convergence main}  is always non-empty.

\section{Preliminaries}\label{sec:prelm}

In this section, we will briefly review the geometric and analytic background and prove some first properties of the flow \eqref{eq:IsoWF}, see also \cite{KSLectureNotes} for a more detailed discussion.

\subsection{Geometric and analytic background}

In the following, $\Sigma_g$ always denotes an abstract compact, connected and oriented surface of genus $g\in \N_0$ without boundary.

An immersion $f\colon\Sigma_g\to\R^3$  induces the pullback metric $g_f=f^{\ast}\langle\cdot, \cdot\rangle$ on $\Sigma_g$, which in local coordinates is given by
\begin{align}
	g_{ij}\defeq \langle\partial_i f, \partial_j f\rangle,
\end{align} 
where $\langle \cdot, \cdot\rangle$ denotes the Euclidean metric. The chosen orientation on $\Sigma_g$ determines a unique smooth unit normal field $\nu\colon\Sigma_g\to\S^2$ along $f$, which in local coordinates in the orientation  is given by
\begin{align}\label{eq:def normal}
	\nu = \frac{\partial_1 f\times \partial_2 f}{\abs{\partial_1 f\times \partial_2 f}}.
\end{align} 
We will always work with this unit normal vector field.

The (scalar) second fundamental form of $f$ is then given by $ A_{ij} \defeq \langle\partial_i\partial_j f, \nu\rangle$ and the mean curvature and the tracefree part of the second fundamental form are defined as
\begin{align}
	H \defeq g^{ij}A_{ij} \text{ and }  A^{0}_{ij} \defeq  A_{ij} - \frac{1}{2} {H}g_{ij},
\end{align}
where $g^{ij}\defeq \left(g_{ij}\right)^{-1}$. 
\FFF  Important relations are \EEE
%An important relation are the identities
\begin{align}\label{eq:AA0H}
	\abs{A}^2 = \abs{A^0}^2+\frac{1}{2}H^2 = 2\abs{A^0}^2+2K,
\end{align}
where $K$ denotes the Gauss curvature.
Consequently, using \eqref{eq:WvstildeW}, we find
\begin{align}\label{eq:A^2GaussBonnet}
	\int_{\Sigma} \abs{A}^2\diff \mu  = {\CalW_0}(f)+2\CalW(f)=4{\CalW}(f)-8\pi + 8\pi g.
\end{align}
The Levi-Civita connection $\nabla=\nabla_f$ induced by the metric $g_f$ extends uniquely to a connection on tensors, which we also denote by $\nabla$. For an orthonormal basis $\{e_1, e_2\}$ of the tangent space, the Codazzi--Mainardi equations yield
\begin{align}
	\nabla_i H &= (\nabla_j A)(e_i, e_j) = 2(\nabla_j A^{0})(e_i, e_j), \label{eq:nabla H A A^0}
\end{align}
cf.\ \cite[(5)]{KSSI}.

Clearly, potential singularities for the flow \eqref{eq:IsoWF} occur if $\CalV(f)$ becomes zero or if the denominator in \eqref{eq:deflambda} vanishes. Note that in  the latter case $H\equiv const$, thus $f$ is a constant mean curvature immersion. 

\begin{lem}\label{lem:AleksandrovHopf}
	Let $\sigma\in (0,1)$ and let $f\colon \Sigma_g\to\R^3$ be an immersion with $\CalI(f)=\sigma$. Then
	\begin{enumerate}[(i)]
		\item $\CalV(f)\neq 0$;
		\item if $g=0$, i.e.\ $\Sigma_g=\S^2$, or if $f$ is an embedding, then $H\not \equiv const$. In particular, the denominator in \eqref{eq:deflambda} is nonzero.
	\end{enumerate}
\end{lem}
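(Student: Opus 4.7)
My plan is to treat the two items separately, with (i) being immediate from the definition and (ii) reducing to classical rigidity results for constant mean curvature surfaces.

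For part (i), I would simply note that by definition $\CalI(f)=36\pi\CalV(f)^2/\CalA(f)^3$. Since $\CalA(f)>0$ and $\CalI(f)=\sigma>0$, we must have $\CalV(f)^2>0$, hence $\CalV(f)\neq 0$. No further work is needed.

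For part (ii), I would argue by contradiction and assume $H\equiv c$ for some constant $c\in\R$, i.e.\ that $f$ is a CMC immersion. In the case $g=0$, so $\Sigma_g=\S^2$, I would invoke Hopf's theorem, which classifies CMC immersions of topological spheres into $\R^3$ as round spheres (up to translation). In the embedded case, I would instead invoke Alexandrov's theorem, which asserts that any closed embedded CMC surface in $\R^3$ is a round sphere. Both conclusions force $f(\Sigma_g)$ to be a round sphere, and hence $\sigma=\CalI(f)=1$, contradicting the assumption $\sigma\in(0,1)$.

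Finally, for the ``In particular'' statement, I would observe that the denominator in \eqref{eq:deflambda} vanishes if and only if
\begin{align*}
\frac{3}{\CalA(f)}H-\frac{2}{\CalV(f)}\equiv 0
\end{align*}
on $\Sigma_g$, which, since $\CalV(f)\neq 0$ by part (i), is equivalent to $H\equiv \frac{2\CalA(f)}{3\CalV(f)}$ being a nonzero constant. This is ruled out by the first assertion of (ii), so the denominator is nonzero. The main (and really only) obstacle here is simply to have the classical Hopf and Alexandrov rigidity theorems at hand; no analytic or PDE subtlety enters the argument.
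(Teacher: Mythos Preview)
Your proposal is correct and follows essentially the same argument as the paper: part (i) is read off directly from the definition of $\CalI$, and part (ii) is proved by contradiction via Hopf's theorem in the genus-zero case and Alexandrov's theorem in the embedded case, both forcing $\sigma=1$. Your explicit verification of the ``In particular'' statement is slightly more detailed than the paper's (which leaves it implicit), but the content is the same.
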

\begin{proof}
%	\begin{enumerate}[(i)]
		The first statement follows immediately from the definition of $\CalI$.
		For (ii), we assume by contradiction that $H\equiv const$, so $f\colon\Sigma_g\to\R^3$ is an immersion with constant mean curvature. If $\Sigma_g=\S^2$, then $f$ has to parametrize a round sphere by a result of Hopf  \cite[Theorem 2.1, Chapter VI]{Hopf}. In the second case, $f$ % is an embedding by the Li--Yau inequality, cf.\ \cite[Theorem 6]{LiYau} and thus 
		has to parametrize a round sphere by the famous theorem of Aleksandrov \cite{Aleksandrov}. In both cases this contradicts $\sigma\neq 1$. 
%	\end{enumerate}
\end{proof}
Despite its geometric degeneracy, \eqref{eq:IsoWF} is still a parabolic equation. Thus, starting with a smooth non-singular initial datum, it is possible to prove the following short-time existence result in similar fashion as it is outlined in \FFF \cite[Chapter 4, Proposition 2.1]{Rupp_2022}, \EEE after observing that we can integrate by parts in \eqref{eq:deflambda} so that the numerator of the Lagrange-multiplier contains no second order derivatives of $A$ any more.
\begin{prop}\label{prop:STE}
	Let $f_0\colon\Sigma_g\to\R^3$ be a smooth immersion with $H_{f_0} \not\equiv const$ and $\CalI(f_0)=\sigma\in (0,1)$. Then there exist $T\in (0,\infty]$ and a unique, non-extendable  $\sigma$-isoperimetric Willmore flow \linebreak $f\colon[0,T)\times \Sigma_g\to\R^3$  with initial datum $f(0)\defeq f(0,\cdot)=f_0$.
\end{prop}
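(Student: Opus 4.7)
As indicated by the reference to \cite[Proposition 2.1]{RuppVolumePreserving}, the task is to rewrite \eqref{eq:IsoWF}--\eqref{eq:deflambda} as a quasilinear, non-local, fourth-order parabolic scalar equation and then apply a standard contraction argument. First, I would reduce to a scalar equation: using a tubular neighborhood of $f_0(\Sigma_g)$, every immersion sufficiently $C^1$-close to $f_0$ is of the form $f = f_0 + \rho\,\nu_{f_0}$ for a scalar function $\rho$ on $\Sigma_g$. Projecting \eqref{eq:IsoWF} onto $\nu_{f_0}$ (combined, if necessary, with a DeTurck-type tangential reparametrization to remove the parametrization freedom) gives an equation for $\rho$ whose principal part is $-\Delta_{f_0}^2 \rho$, strongly elliptic of order four, and whose remaining terms depend smoothly on $\rho$ and its derivatives up to order three.

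The crucial new ingredient is the treatment of the Lagrange multiplier $\lambda$. Using $\int_\Sigma \Delta H\,\diff\mu = 0$ and $\int_\Sigma H\,\Delta H\,\diff\mu = -\int_\Sigma |\nabla H|^2\,\diff\mu$, the numerator of \eqref{eq:deflambda} can be rewritten, yielding
\begin{align*}
    \lambda = \frac{-\frac{3}{\CalA(f)}\int_\Sigma |\nabla H|^2\,\diff\mu + \frac{3}{\CalA(f)}\int_\Sigma |A^0|^2 H^2\,\diff\mu - \frac{2}{\CalV(f)}\int_\Sigma |A^0|^2 H\,\diff\mu}{\int_\Sigma \bigl|\frac{3}{\CalA(f)} H - \frac{2}{\CalV(f)}\bigr|^2\,\diff\mu},
\end{align*}
so that $\lambda[f]$ depends on $f$ only through derivatives of order at most three. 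Consequently, the non-local term $\lambda\bigl(\frac{3H}{\CalA(f)}-\frac{2}{\CalV(f)}\bigr)\nu$ in \eqref{eq:IsoWF} is of strictly lower order than $\Delta H\,\nu$. By \Cref{lem:AleksandrovHopf}, together with the hypotheses $\CalI(f_0)=\sigma\in (0,1)$ and $H_{f_0}\not\equiv const$, both $\CalV(f_0)\neq 0$ and the denominator in \eqref{eq:deflambda} at $f_0$ are non-zero; hence $f\mapsto \lambda[f]$ is locally Lipschitz in a suitable parabolic Hölder class.

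With these preparations in place, a Banach fixed-point iteration in a parabolic Hölder space $C^{4+\alpha,1+\alpha/4}([0,T^*]\times \Sigma_g)$ for $T^*>0$ small produces a unique short-time solution $\rho$: freeze the quasilinear coefficients and the non-local functional $\lambda$ along a given iterate, solve the resulting linear fourth-order parabolic problem for the next iterate, and verify that for $T^*$ sufficiently small this defines a contraction on a small ball around $\rho\equiv 0$. Undoing the graph representation yields a smooth immersion $f$, and the usual maximality/gluing argument extends it to a non-extendable solution on some $[0,T)$. Conservation of $\CalI$ is then a direct chain-rule check (the computation referenced as \eqref{eq:dtI=0}): one verifies
\begin{align*}
    \frac{\diff}{\diff t}\CalI(f) = -\CalI(f)\int_\Sigma V\Bigl(\frac{3}{\CalA(f)}H - \frac{2}{\CalV(f)}\Bigr)\,\diff\mu,
\end{align*}
where $V$ denotes the scalar normal velocity in \eqref{eq:IsoWF}, and the definition of $\lambda$ in \eqref{eq:deflambda} is precisely the one that forces the right-hand side to vanish. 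The principal technical obstacle, circumvented by the integration by parts above, is the apparent fourth-order dependence of $\lambda$ on $f$: without this reduction, $\lambda$ would involve derivatives of the same order as the principal part of the flow, and the perturbative fixed-point argument would fail.
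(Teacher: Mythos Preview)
Your proposal is correct and follows essentially the same approach the paper sketches: the paper explicitly says the result is proved ``in similar fashion as it is outlined in \cite[Proposition 2.1]{RuppVolumePreserving}, after observing that we can integrate by parts in \eqref{eq:deflambda} so that the numerator of the Lagrange-multiplier contains no second order derivatives of $A$ any more,'' which is precisely your key step (your rewritten formula for $\lambda$ agrees with the paper's \eqref{eq:lambda/A explicit}). One minor slip: in your chain-rule computation for $\tfrac{\diff}{\diff t}\CalI(f)$ the sign should be $+\CalI(f)$ rather than $-\CalI(f)$ (compare \eqref{eq:L^2gradI}), but this does not affect the argument since the right-hand side vanishes either way.
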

If $\Sigma_g = \S^2$, assumption $H_{f_0}\not \equiv const$ in \Cref{prop:STE} follows from $\sigma\in (0,1)$ by \Cref{lem:AleksandrovHopf} (ii).

%Fortunately, the conditions on the initial datum in \Cref{prop:STE} can be guaranteed under very mild assumptions.

 \subsection{Evolution of geometric quantities}
	 In this subsection, we will briefly review the variations of the relevant geometric quantities and energies.
	 
	 \begin{lem}[{\cite[Lemma 2.3]{RuppVolumePreserving}}]\label{lem:geometricEvolutions}
	 	Let $f\colon [0,T)\times \Sigma_g\to \R^3$ be a smooth family of {immersions} with normal velocity $\partial_t f =\xi\nu$. For an  orthonormal basis $\{e_1, e_2\}$ of the tangent space, the geometric quantities induced by $f$ satisfy
	 	\begin{align}
%	 		(\partial_tg)(e_i, e_j) &= -2 A_{ij}\xi,\label{eq:dtg}\\
	 		\partial_t (\diff \mu) &= -H\xi \diff \mu,\label{eq:dtdmu}\\
	 		\partial_t  H &= \Delta \xi+ |A|^2\xi,\label{eq:dtH}\\
	 		(\partial_t  A)(e_i,e_j) &= \nabla^2_{ij} \xi - A_{ik}A_{kj}\xi.\label{eq:dtA}\\
%	 		(\partial_t  A^{0})(e_i,e_j) &= \left(\nabla^2_{ij}\xi\right)^0 - g_{ij}|A^{0}|^2 \xi,\label{eq:dtA^0} \\
%	 		\partial_t \nu &= - \grad_{g} \xi \eqdef g^{ij}\partial_i \xi \partial_j f.\label{eq:dtnu}
	 	\end{align}
	 \end{lem}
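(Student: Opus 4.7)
The plan is to derive all three identities by the standard variational calculus for normally deformed immersions, carrying out the computations in local coordinates and only at the end rewriting the result in an orthonormal frame. Throughout, I would use the decomposition $\partial_i\partial_j f = \Gamma_{ij}^k \partial_k f + A_{ij}\nu$, the Weingarten relation $\partial_i \nu = -g^{kl}A_{ik}\partial_l f$, and the fact that $\partial_t$ commutes with the coordinate derivatives $\partial_i$.

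First I would compute $\partial_t g_{ij}$. Since $g_{ij} = \langle \partial_i f, \partial_j f\rangle$ and $\partial_t f = \xi\nu$, the product rule together with $\langle \nu, \partial_i f\rangle = 0$ and Weingarten gives
\begin{align*}
  \partial_t g_{ij} = \langle \partial_i(\xi \nu), \partial_j f\rangle + \langle \partial_i f, \partial_j(\xi\nu)\rangle = -2\xi A_{ij},
\end{align*}
and therefore $\partial_t g^{ij} = 2\xi A^{ij}$. Jacobi's formula for $\partial_t \sqrt{\det g} = \tfrac12 \sqrt{\det g}\, g^{ij}\partial_t g_{ij}$ then yields \eqref{eq:dtdmu}. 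Next I would compute $\partial_t \nu$: from $|\nu|^2 \equiv 1$ it follows that $\partial_t \nu$ is tangential, and differentiating $\langle \nu, \partial_i f\rangle = 0$ in $t$ gives $\langle \partial_t \nu, \partial_i f\rangle = -\partial_i \xi$, so that $\partial_t \nu = -g^{ij}(\partial_i \xi)\partial_j f$, i.e.\ the negative tangential gradient of $\xi$.

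With these two building blocks the second fundamental form is handled by differentiating $A_{ij} = \langle \partial_i \partial_j f, \nu\rangle$. One side contributes
\begin{align*}
  \langle \partial_i\partial_j(\xi\nu), \nu\rangle = \partial_i\partial_j \xi + \xi\langle \partial_i\partial_j \nu, \nu\rangle = \partial_i\partial_j \xi - \xi\, g^{kl}A_{ik}A_{lj},
\end{align*}
using $\langle \partial_i\partial_j\nu,\nu\rangle = -\langle \partial_i\nu,\partial_j\nu\rangle$ and Weingarten; the other side contributes $\langle \partial_i\partial_j f,\partial_t \nu\rangle = -\Gamma_{ij}^k \partial_k \xi$. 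Adding these and recognizing $\nabla_i\nabla_j \xi = \partial_i\partial_j \xi - \Gamma_{ij}^k \partial_k \xi$ gives $\partial_t A_{ij} = \nabla_i\nabla_j \xi - \xi\, A_{ik}g^{kl}A_{lj}$, which in an orthonormal frame is precisely \eqref{eq:dtA}. Finally $\partial_t H = \partial_t(g^{ij}A_{ij}) = 2\xi A^{ij}A_{ij} + g^{ij}\nabla_i\nabla_j \xi - \xi\, g^{ij}A_{ik}A^k{}_j = \Delta \xi + |A|^2\xi$, invoking \eqref{eq:AA0H} only to interpret $g^{ij}A_{ij} = H$ and $A^{ij}A_{ij} = |A|^2$.

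I do not expect a genuine obstacle here: all three formulas are consequences of the four lines above, and the only thing to watch is the sign convention tying $\nu$ to $H$ through \eqref{eq:def normal} so that $A_{ij} = \langle \partial_i\partial_j f, \nu\rangle$ and $H = g^{ij}A_{ij}$ are consistent with the scalar mean curvature used throughout the paper. A mild nuisance is the temptation to write everything immediately in an orthonormal frame, where $\Gamma_{ij}^k$ terms are hidden; keeping coordinates until $\nabla_i\nabla_j \xi$ appears naturally avoids any confusion about whether the frame is parallel at the point of evaluation.
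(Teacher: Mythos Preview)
Your proposal is correct and follows the standard variational computation for normally deformed hypersurfaces; the paper itself does not give a proof but simply cites \cite[Lemma~2.3]{RuppVolumePreserving}, where exactly this argument is carried out. There is nothing to compare: your derivation is the expected one.
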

	 As a consequence, we have the following first variation identities, cf.\ \cite[Lemma 2.4]{RuppVolumePreserving}.
	 
	 \begin{prop}\label{prop:L^2grads}
	 	Let $f\colon\Sigma_g\to\R^3$ be an immersion and let $\varphi\in C^{\infty}(\Sigma_g;\R^3)$. Then %normal along $f$, 
	 	we have
	 	\begin{align}
	 		\CalW_0^{\prime}(f)\FFF\varphi\EEE &=\langle \nabla\CalW_0(f), \varphi\rangle_{L^2(\diff \mu)}=\int \left\langle(\Delta H + |A^0|^2H)\nu, \varphi\right\rangle \diff\mu, \label{eq:1st vari willmore}\\
	 		\CalA^{\prime}(f)\varphi &=\langle \nabla\CalA(f), \varphi\rangle_{L^2(\diff \mu)} = -\int \langle H \nu, \varphi\rangle\diff \mu, \label{eq:1st vari area} \\
	 		\CalV^{\prime}(f)\varphi &= \langle \nabla\CalV(f), \varphi\rangle_{L^2(\diff \mu)} = -\int\langle \nu, \varphi\rangle\diff \mu \label{eq:1st vari vol}.
	 	\end{align}
	 	Moreover, if $\CalI(f)>0$, we have
	 	\begin{align}
	 		\CalI^{\prime}(f)\varphi &=  \langle \nabla\CalI(f), \varphi\rangle_{L^2(\diff \mu)} =
	 		%  &=\frac{2\sigma}{\CalV(f)} \CalV^{\prime}(f)\varphi - \frac{3\sigma}{\CalA(f)}\CalA^{\prime}(f)\varphi \\
	 		\sigma \int\left\langle\frac{3}{\CalA(f)} H\nu - \frac{2}{\CalV(f)}\nu, \varphi\right\rangle\diff \mu. \label{eq:L^2gradI}
	 	\end{align}
	 \end{prop}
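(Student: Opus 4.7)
The plan is as follows. For each functional I take a smooth variation $f_t$ with $f_0=f$ and $\partial_t f|_{t=0}=\varphi$, and compute the derivative at $t=0$. Since $\CalW_0,\CalA,\CalV,\CalI$ are all diffeomorphism-invariant, their first variations vanish on infinitesimal reparametrizations; writing $\varphi=\varphi^\top+\xi\nu$ with $\xi\defeq\langle\varphi,\nu\rangle$, this reduces everything to the case of a normal variation $\varphi=\xi\nu$, which is precisely the form of the stated integrands. \Cref{lem:geometricEvolutions} supplies $\partial_t(\diff\mu)=-H\xi\diff\mu$ and $\partial_t H=\Delta\xi+\abs{A}^2\xi$, and the compatibility conditions $\abs{\nu}\equiv 1$, $\langle\nu,\partial_i f\rangle\equiv 0$ force $\partial_t\nu=-\nabla\xi$, where the tangential gradient is identified with an $\R^3$-vector via the immersion.

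The identity for $\CalA$ is immediate from $\partial_t(\diff\mu)=-H\xi\diff\mu$. For $\CalW_0$, the topological identity \eqref{eq:WvstildeW} reduces the computation to the standard first variation of the Willmore energy: differentiating $4\CalW=\int H^2\diff\mu$, substituting \Cref{lem:geometricEvolutions}, integrating $H\Delta\xi$ by parts (boundaryless since $\Sigma_g$ is closed), and using \eqref{eq:AA0H} to replace $\abs{A}^2-\tfrac12 H^2$ by $\abs{A^0}^2$ yields \eqref{eq:1st vari willmore}.

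The main obstacle is $\CalV=-\tfrac13\int\langle f,\nu\rangle\diff\mu$, where $\nu$ and $\diff\mu$ both depend on $t$. Direct differentiation gives
\[
-3\,\CalV'(f)(\xi\nu)=\int\bigl[\xi-\langle f,\nabla\xi\rangle-H\xi\langle f,\nu\rangle\bigr]\diff\mu.
\]
Since $\langle f,\nabla\xi\rangle=\langle\nabla(\abs{f}^2/2),\nabla\xi\rangle$, integration by parts converts the middle term to $\int\xi\Delta(\abs{f}^2/2)\diff\mu$; a short computation based on $\divergence_\Sigma f=g^{ij}g_{ij}=2$ together with $\divergence_\Sigma(\phi\nu)=-\phi H$ for any scalar $\phi$ (a consequence of Weingarten's equation) evaluates $\Delta(\abs{f}^2/2)=2+H\langle f,\nu\rangle$. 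The $H\xi\langle f,\nu\rangle$-terms then cancel, leaving $-3\,\CalV'(f)(\xi\nu)=3\int\xi\diff\mu$, which is \eqref{eq:1st vari vol}.

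Finally, for $\CalI=36\pi\CalV^2/\CalA^3$ with $\CalI(f)=\sigma>0$, the quotient rule combined with the already-established formulas for $\CalV'(f)$ and $\CalA'(f)$ yields
\[
\CalI'(f)\varphi=\sigma\!\left[\frac{2\,\CalV'(f)\varphi}{\CalV(f)}-\frac{3\,\CalA'(f)\varphi}{\CalA(f)}\right],
\]
which simplifies directly to \eqref{eq:L^2gradI}. The only genuinely nontrivial step is the cancellation in the $\CalV$-computation; everything else reduces to \Cref{lem:geometricEvolutions} and integration by parts on the closed surface $\Sigma_g$.
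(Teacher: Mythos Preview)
Your argument is correct and follows essentially the same route as the paper: reduce to normal variations via diffeomorphism invariance, read off $\CalA'$ from \eqref{eq:dtdmu}, and then derive $\CalI'$ by the quotient rule. Where the paper simply cites \cite[Lemma 2.4]{RuppVolumePreserving} for $\CalW_0'$ and $\CalV'$, you supply the explicit computations (the $\CalW_0$ step via \eqref{eq:WvstildeW} and \eqref{eq:AA0H}, and the $\CalV$ step via $\Delta(\abs{f}^2/2)=2+H\langle f,\nu\rangle$), which is a harmless and correct elaboration rather than a different approach.
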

 
	 \begin{proof}%[Proof of \Cref{prop:L^2grads}]
	 	Since $\CalW_0, \CalA$ and $\CalV$ are invariant under orientation-preserving diffeomorphisms of $\Sigma_g$, we only need to consider normal variations, as any tangential variation corresponds to a suitable orientation-preserving family of  reparametrizations (see for instance \cite[Theorem 17.8]{Lee}), which leaves the quantities unchanged.
	 	
	 	The variation of $\CalA$ then follows immediately from \eqref{eq:dtdmu}. For $\CalW_0$ and $\CalV$ consider \cite[Lemma 2.4]{RuppVolumePreserving}, for instance. The variation of $\CalI$ then follows. %by the standard product rule.% using $\sigma>0$ and \Cref{lem:AleksandrovHopf}.
	 \end{proof}
 	
	The scaling behavior of the energies yields the following important identities.
 	\begin{lem}\label{rem:scaling}
 		Let $f\colon \Sigma_g\to\R^3$ be an immersion. Then we have
 		\begin{align}
 			\int \langle (\Delta H + \abs{A^0}^2H)\nu, f\rangle\diff\mu = 0,\; 
 			-\int \langle H\nu, f\rangle\diff\mu = 2\CalA(f), \;
 			-\int \langle \nu, f\rangle\diff\mu = 3\CalV(f).
 		\end{align}
 	\end{lem}
 	\begin{proof}
 		By the scaling invariance of the Willmore energy, we find
 		\begin{align}
 			\langle \nabla \CalW_0(f), f\rangle_{L^2(\diff\mu)}= \left.\frac{\diff}{\diff \alpha}\right\vert_{\alpha=1}\CalW(\alpha f) = 0,
 		\end{align}
 		so \Cref{prop:L^2grads} yields the claim. For $\CalA$ and $\CalV$ we may proceed similarly, using the scaling behavior $\CalA(\alpha f)=\alpha^2\CalA(f), \CalV(\alpha f)=\alpha^3\CalV(f)$ for all $f\colon \Sigma_g\to\R^3, \alpha>0$.
 	\end{proof}
 	
%	\begin{remark}\label{rem:CMC}
%		If $f\colon\Sigma_g\to\R^3$ is an immersion with $\CalI(f)=\sigma>0$ and constant mean curvature, then \Cref{lem:scaling} yields
%%		\begin{align}
%%			2 \CalA(f) = \left.\frac{\diff}{\diff \alpha}\right\vert_{\alpha=1} \CalA(\alpha f)= -\int \langle H \nu, f\rangle\diff \mu= -H \int\langle\nu, f\rangle \diff \mu = 3 H \CalV(f),
%%		\end{align}
%%		since $\CalA(\alpha f) = \alpha^2 \CalA(f)$ for $\alpha>0$. Using \Cref{lem:AleksandrovHopf} (i), we find $\CalV(f)\neq 0$, so
% $\frac{3}{\CalA(f)}H=\frac{2}{\CalV(f)}$ by \Cref{lem:AleksandrovHopf} (i). Consequently, the denominator in \eqref{eq:deflambda} vanishes \changeFab{if and only if} $H\equiv const$.
%	\end{remark}

 	This yields that Helfrich immersions are precisely the stationary solutions of  \eqref{eq:IsoWF}--\eqref{eq:deflambda}.
 	\begin{lem}\label{lem:Helfrich vs stationary}
 		Let $f\colon \Sigma_g\to\R^3$ be an immersion with $\CalI(f)=\sigma\in (0,1)$ and $H_f\not\equiv const$. Then $f$ is a Helfrich immersion
 		if and only if it is a stationary solution to the $\sigma$-isoperimetric Willmore flow.
 	\end{lem}
 	\begin{proof}
 		The ``if'' part of the statement is immediate. Suppose $f$ is a Helfrich immersion.
 		We multiply \eqref{eq:Helfrich eq} with $\langle f, \nu\rangle$, integrate and use \FFF\Cref{rem:scaling}\EEE~to conclude
 		\begin{align}\label{eq:2lA+3lV=0}
 			2\lambda_1 \CalA(f) + 3\lambda_2\CalV(f) = 0.
 		\end{align}
 		By \Cref{lem:AleksandrovHopf}(i) we have  $\CalV(f)\neq 0$.
 		Hence, with $\lambda \defeq -\frac{\lambda_2 \CalV(f)}{2}$, Equation \eqref{eq:Helfrich eq} reads
 		\begin{align}\label{eq:stationary}
 			\Delta H + \abs{A^0}^2 H - \lambda\left(\frac{3}{\CalA(f)}H-\frac{2}{\CalV(f)}\right)=0.
 		\end{align}
 		We have $\nabla\CalI(f)\neq 0$ by \Cref{prop:L^2grads}, so by testing \eqref{eq:stationary} with $\nabla\CalI(f)\nu$ and integrating it follows that $\lambda$ is given as in \eqref{eq:deflambda}, so $f$ is indeed stationary.
 	\end{proof}
% 	In particular, under the assumptions of \Cref{lem:Helfrich vs stationary} any Helfrich immersion is a critical point of the Willmore functional under fixed isoperimetric ratio.
	 
	 It is not difficult to see that along a solution of \eqref{eq:IsoWF} with $\CalI(f)>0$, the isoperimetric ratio is indeed preserved, since by \Cref{prop:L^2grads}, \eqref{eq:IsoWF} and \eqref{eq:deflambda} we have
	 \begin{align}
	 	\frac{\diff}{\diff t} \CalI(f) &=  \int\left\langle \nabla\CalI(f), \partial_t f\right\rangle\diff \mu \\
	 	&= \int \left\langle \nabla\CalI(f),- \nabla\CalW_0(f) + \frac{\lambda}{\CalI(f)} \nabla\CalI(f)\right\rangle\diff \mu\\
	 	& = 
	 	-\int \left\langle\nabla\CalI(f), \nabla\CalW_0(f)\right\rangle\diff \mu + \frac{\lambda}{\CalI(f)} \int \abs{\nabla\CalI(f)}^2\diff \mu = 0.\label{eq:dtI=0}
	 \end{align}
	 On the other hand, the Willmore energy decreases since by \eqref{eq:dtI=0}
	 \begin{align}
	 	\frac{\diff}{\diff t} \CalW_0(f) &= \int \left\langle \left(\Delta H + \abs{A^0}^2 H \right) \nu,\partial_t f\right\rangle\diff \mu = -\int\abs{\partial_t f}^2\diff\mu\leq 0. \label{eq:dtW<=0}
	 \end{align}
 	Equations \eqref{eq:dtI=0} and \eqref{eq:dtW<=0} are the key features in studying the flow \eqref{eq:IsoWF} and of vital importance for our further analysis. We highlight two immediate consequences.
 
	 \begin{remark}\label{rem:strictLyapunov}
	 	\begin{enumerate}[(i)]
	 		\item The computation in \eqref{eq:dtW<=0} implies that $\CalW_0$ is a \emph{strict Lyapunov function} along the flow \eqref{eq:IsoWF}, i.e.\ $\CalW_0$ is strictly decreasing unless $\partial_t f = 0$, so $f$ is stationary (by uniqueness of the solution). By \eqref{eq:WvstildeW}, this also holds for $\CalW$.
	 		\item Since $\CalW$ is monotone, the limit $\lim_{t\nearrow T}\CalW(f(t, \cdot)) \in [\beta_g(\sigma), \CalW(f_0)]$ exists.
	 	\end{enumerate}
	 \end{remark}
 	
 	As \eqref{eq:IsoWF} is a (degenerate) parabolic equation, the scaling behavior in time and space is central in understanding the problem. Therefore, we gather the scaling behavior of some important quantities in the following lemma. 
 	\FFF The powers appearing in the time integrals below will naturally appear later in our energy estimates, see \Cref{prop:curvatureIntegralsEstimate}\EEE.
 	 
 	%The involved time integrals will naturally appear later in our energy estimates, see \Cref{prop:curvatureIntegralsEstimate} below.
 
%	Furthermore, the following parabolic scaling behavior of the evolution equation \eqref{eq:IsoWF} will play a crucial role, see also \cite[Remark 2.2]{RuppVolumePreserving}.
%	
%	Wie immer rescaling central to understand the problem. Time integrals will naturally appear later.
	\begin{lem}\label{lem:scaling}
		Let $\sigma\in (0,1)$, $f\colon [0,T)\times \Sigma_g\to\R^3$ be a $\sigma$-isoperimetric Willmore flow and let $r>0$. Let $\tilde{f}\colon [0,r^{-4}T)\times \Sigma_g \to\R^3$, $\tilde{f}(t,p) \defeq r^{-1}f(r^4t,p)$. Then
		\begin{enumerate}[(i)]
			\item $\tilde{f}$ is a  $\sigma$-isoperimetric Willmore flow;
			\item the Lagrange multiplier $\tilde{\lambda}$ of $\tilde{f}$ satisfies $\tilde{\lambda}(t) = \lambda (r^4t)$;
			\item $\int_0^{T} \frac{\lambda^2}{\CalA(f)^{2}}\diff t = \int_0^{r^{-4}T} \frac{\tilde{\lambda}^{2}}{\CalA(\tilde{f})^{2}}\diff t$ and $\int_0^T \frac{\abs{\lambda}^{\frac{4}{3}}}{\abs{\CalV(f)}^{\frac{4}{3}}}\diff t=\int_0^{r^{-4}T} \frac{\abs{\tilde{\lambda}}^{\frac{4}{3}}}{\abs{\CalV(\tilde{f})}^{\frac{4}{3}}}\diff t$.
		\end{enumerate}
	\end{lem}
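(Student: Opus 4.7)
The plan is to reduce everything to the basic scaling laws for the pullback metric, mean curvature, trace-free second fundamental form, area and volume, and then to verify each item by direct substitution.

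\textbf{Step 1 (Basic scalings).} With $\tilde f(t,p) = r^{-1} f(r^4 t, p)$, the pullback metric satisfies $\tilde g_{ij} = r^{-2} g_{ij}$, hence $\tilde g^{ij} = r^2 g^{ij}$ and $\diff \tilde\mu = r^{-2}\diff\mu$. The normal is scaling-invariant, $\tilde\nu = \nu$, while $\tilde A_{ij} = r^{-1} A_{ij}$, so $\tilde H = r H$ and $\tilde A^0_{ij} = r^{-1} A^0_{ij}$. Because Christoffel symbols are invariant under constant conformal rescalings, $\Delta_{\tilde g} = r^2 \Delta_g$, giving $\Delta_{\tilde f} H_{\tilde f} = r^3 \Delta_f H_f$ and similarly $|A^0_{\tilde f}|^2 H_{\tilde f} = r^3 |A^0_f|^2 H_f$. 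Integration of $|f|^a$ against $\diff \mu$ yields the well-known $\CalA(\tilde f) = r^{-2}\CalA(f)$ and $\CalV(\tilde f) = r^{-3}\CalV(f)$, from which $\CalI(\tilde f) = \CalI(f) = \sigma$ follows at once.

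\textbf{Step 2 (Item (ii)).} Using the scalings from Step 1, the numerator of the Lagrange multiplier \eqref{eq:deflambda} for $\tilde f$ at time $t$ equals
\begin{align*}
\int \bigl(r^3(\Delta H + |A^0|^2 H)\bigr)\Bigl(r^3\bigl(\tfrac{3}{\CalA(f)}H - \tfrac{2}{\CalV(f)}\bigr)\Bigr) r^{-2}\diff\mu = r^4 \cdot (\text{num.\ for } f \text{ at } r^4 t),
\end{align*}
and the denominator scales by the same factor $r^4$. The factors cancel, so $\tilde\lambda(t) = \lambda(r^4 t)$.

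\textbf{Step 3 (Item (i)).} From $\partial_t \tilde f(t,p) = r^3 \partial_t f(r^4 t, p)$ and the scalings of each summand on the right-hand side of \eqref{eq:IsoWF}, together with Step 2 and $\frac{3}{\CalA(\tilde f)} \tilde H - \frac{2}{\CalV(\tilde f)} = r^3\bigl(\frac{3}{\CalA(f)} H - \frac{2}{\CalV(f)}\bigr)$, both sides of \eqref{eq:IsoWF} for $\tilde f$ match after factoring out the common $r^3$. Preservation of $\CalI$ is automatic from Step 1. Uniqueness and smoothness transfer trivially.

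\textbf{Step 4 (Item (iii)).} Combining $\tilde\lambda(t) = \lambda(r^4 t)$ with $\CalA(\tilde f)^{-2} = r^4 \CalA(f)^{-2}$ and $|\CalV(\tilde f)|^{-4/3} = r^4 |\CalV(f)|^{-4/3}$, and then applying the substitution $s = r^4 t$, $\diff s = r^4 \diff t$, both integrands pick up a factor $r^4$ which is exactly cancelled by the time-substitution factor $r^{-4}$, yielding the claimed equalities.

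There is no real obstacle here; the lemma is a bookkeeping exercise. The only point requiring minimal care is verifying that the particular powers $4/3$ and $2$ appearing in (iii), paired with the $\CalV$- and $\CalA$-denominators, are precisely what makes the respective integrals scaling-invariant — a check that is immediate from the scalings $\tilde\lambda = \lambda$, $\CalA(\tilde f) = r^{-2}\CalA(f)$, and $\CalV(\tilde f) = r^{-3}\CalV(f)$ computed in Step 1.
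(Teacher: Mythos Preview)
Your proof is correct and is precisely the direct calculation the paper alludes to in its one-line proof (``Follows from the scaling behavior of the geometric quantities and a direct calculation''). The only cosmetic blemish is the phrase ``Integration of $|f|^a$ against $\diff\mu$'' in Step~1, which is a bit opaque; the scalings $\CalA(\tilde f)=r^{-2}\CalA(f)$ and $\CalV(\tilde f)=r^{-3}\CalV(f)$ follow more transparently from $\diff\tilde\mu=r^{-2}\diff\mu$, $\tilde f=r^{-1}f$, and $\tilde\nu=\nu$ directly.
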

	\begin{proof}
		Follows from the scaling behavior of the geometric quantities and a direct calculation.
	\end{proof}
 
\section{Localized energy estimates}\label{sec:energy estimates}
As in \cite[Section 3]{KSSI} and \cite[Section 2.3 and Section 3]{RuppVolumePreserving}, we will start our analysis by localizing the energy decay \eqref{eq:dtW<=0}. The main goal of this section is to show that all derivatives of $A$ can be bounded along the flow, if the \emph{energy concentration} and a suitable time integral involving the Lagrange multiplier are controlled.
Note that at this stage, we do not yet need to assume $\Sigma_g=\S^2$ or any restriction on the initial energy.

%Note that the computations below are very similar to \cite[Lemma 2.8]{RuppVolumePreserving}, if one replaces $\lambda\nu$ with $\left(\frac{3\lambda}{\CalA(f)}H-\frac{2\lambda}{\CalV(f)}\right)\nu$.

\begin{lem}\label{lem:EvolutionOfCurvatureIntegrals}
	Let $\sigma\in (0,1)$ and let $f\colon[0,T)\times\Sigma_g\to\R^3$ be a $\sigma$-isoperimetric Willmore flow. Let $\tilde{\eta} \in C_c^{\infty}(\R^3)$ and define $\eta\defeq  \tilde{\eta}\circ f$. Then we have
	\begin{align}\label{eq:dtIntH^2Equation}
		\begin{split}
			&\partial_t \int\frac{1}{2}H^2\eta\diff \mu + \int \abs{\nabla\CalW_0(f)}^2\eta\diff\mu \\
			&\quad = \frac{3\lambda}{\CalA(f)} \int \Delta H H\eta\diff \mu + \int\left(\frac{3\lambda}{\CalA(f)} H -\frac{2\lambda}{\CalV(f)}\right) \abs{A^0}^2 H \eta \diff \mu\\
			&\qquad  -2 \int \langle\nabla \CalW_0(f), \nu\rangle \langle\nabla H, \nabla \eta\rangle_{g_f} \diff \mu - \int\langle \nabla\CalW_0(f), \nu\rangle H \Delta\eta \diff \mu + \int\frac{1}{2}H^2\partial_t \eta \diff\mu.
		\end{split}
	\end{align}
	and
	\begin{align}
		&\partial_t \int |A^0|^2\eta\diff \mu + \int \abs{\nabla\CalW_0(f)}^2\eta\diff \mu \nonumber\\
		&\qquad= 
		\frac{6\lambda}{\CalA(f)} \int \langle \nabla^{2}H, A^0\rangle_{g_f}\eta\diff \mu + \int \left(\frac{3\lambda}{\CalA(f)}H-\frac{2\lambda}{\CalV(f)}\right) \abs{A^{0}}^{2}H\eta\diff \mu\\
%		\frac{3\lambda}{\CalA(f)} \left[2\int \langle\nabla^2 H, A^{0}\rangle_{g_f} \eta\diff \mu +\int |A^0|^2H^2\eta\diff \mu\right] -\frac{2\lambda}{\CalV(f)} \int |A^0|^2 H \eta\diff \mu \label{eq:dtIntA^0^2Eq} \\
		&\qquad\quad-2\int \langle \nabla\CalW_0(f),\nu\rangle \left( \langle\nabla H,\nabla \eta\rangle_{g_f}+\langle A^{0},\nabla^2 \eta\rangle_{g_f}\right)\diff \mu + \int|A^{0}|^2\partial_t \eta\diff\mu.\nonumber
	\end{align}
\end{lem}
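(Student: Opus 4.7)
Write the normal velocity as $\xi \defeq -(\Delta H + \abs{A^0}^2 H) + \lambda\psi$ with $\psi \defeq \frac{3}{\CalA(f)}H - \frac{2}{\CalV(f)}$, so that by \eqref{eq:1st vari willmore} one has $\xi = -\langle \nabla\CalW_0(f),\nu\rangle + \lambda\psi$ and $\langle\nabla\CalW_0(f),\nu\rangle^2 = \abs{\nabla\CalW_0(f)}^2$, as $\nabla\CalW_0(f)$ is normal. The crucial structural observation is that at each fixed $t$ the scalars $\lambda,\CalA(f),\CalV(f)$ are constant on $\Sigma_g$, so we may write $\lambda\psi = aH + c$ with $a \defeq 3\lambda/\CalA(f)$ and $c \defeq -2\lambda/\CalV(f)$ both independent of $p \in \Sigma_g$.

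\textbf{The $H^2$ identity.} Differentiating under the integral sign using \Cref{lem:geometricEvolutions} yields
$$\partial_t \int \tfrac12 H^2\eta\,\diff\mu = \int H(\Delta \xi + \abs{A}^2\xi)\eta\,\diff\mu + \int \tfrac12 H^2\partial_t\eta\,\diff\mu - \int \tfrac12 H^3\xi\eta\,\diff\mu.$$
Using \eqref{eq:AA0H} to collapse $H\abs{A}^2\xi - \tfrac12 H^3\xi = H\abs{A^0}^2\xi$, followed by two integrations by parts
$$\int H\Delta\xi\,\eta\,\diff\mu = \int \xi\Delta H\,\eta\,\diff\mu + 2\int \xi\langle\nabla H,\nabla\eta\rangle_{g_f}\diff\mu + \int \xi H\Delta\eta\,\diff\mu,$$
and inserting $\xi = -\langle\nabla\CalW_0(f),\nu\rangle + \lambda\psi$, the leading piece produces $-\int\abs{\nabla\CalW_0(f)}^2\eta + \int\lambda\psi\langle\nabla\CalW_0(f),\nu\rangle\eta$; the two cutoff pieces produce the advertised terms $-2\int\langle\nabla\CalW_0,\nu\rangle\langle\nabla H,\nabla\eta\rangle_{g_f} - \int\langle\nabla\CalW_0,\nu\rangle H\Delta\eta$, together with a residual Lagrange contribution
$$R \defeq \int\lambda\psi\bigl(\Delta H + \abs{A^0}^2 H\bigr)\eta\,\diff\mu + 2\int\lambda\psi\langle\nabla H,\nabla\eta\rangle_{g_f}\diff\mu + \int\lambda\psi H\Delta\eta\,\diff\mu.$$

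\textbf{Collapse of the residual.} Splitting $R = R_a + R_c$ according to $\lambda\psi = aH + c$, the $c$-part equals $c\bigl(\int\Delta H\,\eta + 2\int\langle\nabla H,\nabla\eta\rangle + \int H\Delta\eta\bigr)$, which vanishes on the closed surface $\Sigma_g$ because $\int\Delta H\,\eta = -\int\langle\nabla H,\nabla\eta\rangle = \int H\Delta\eta$ telescopes the bracket to $0$. For $R_a$, the derivative piece $a\int[H\Delta H\,\eta + 2H\langle\nabla H,\nabla\eta\rangle + H^2\Delta\eta]$ collapses to $a\int H\Delta H\,\eta = \frac{3\lambda}{\CalA(f)}\int\Delta H\cdot H\eta$ by writing $2H\langle\nabla H,\nabla\eta\rangle = \langle\nabla H^2,\nabla\eta\rangle$ and integrating by parts to cancel the $\int H^2\Delta\eta$ contribution. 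What is left of the $\abs{A^0}^2 H$ piece, $\int\lambda\psi\abs{A^0}^2 H\eta = \int(\frac{3\lambda}{\CalA}H-\frac{2\lambda}{\CalV})\abs{A^0}^2 H\eta$, yields exactly the second term of the stated identity.

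\textbf{The $\abs{A^0}^2$ identity and main obstacle.} Write $\abs{A^0}^2 = \abs{A}^2 - \tfrac12 H^2$ and differentiate, using \Cref{lem:geometricEvolutions} together with $\partial_t g^{ij} = 2\xi A^{ij}$ to obtain $\partial_t\abs{A}^2 = 2\langle A,\nabla^2\xi\rangle_{g_f} + 2\xi\tr(A^3)$. Subtracting the already established first identity handles all $H^2$-contributions; the new leading term $2\int\langle A,\nabla^2\xi\rangle_{g_f}\eta$ is reshaped via the Codazzi--Mainardi equation \eqref{eq:nabla H A A^0} $\nabla_i H = 2\nabla^j A^0_{ij}$. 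This conversion between $\nabla H$ and $\nabla A^0$, combined with the usual substitution $\xi = -\langle\nabla\CalW_0,\nu\rangle + \lambda\psi$ and integrations by parts, produces the factor $2\cdot 3 = 6$ in front of $\int\langle\nabla^2 H, A^0\rangle_{g_f}\eta$ and the cutoff-boundary term $\int\langle A^0,\nabla^2\eta\rangle_{g_f}$ paired with $\langle\nabla\CalW_0,\nu\rangle$. The $c$-contribution again telescopes to zero by the same closed-surface IBP as before, leaving only the stated terms. The main obstacle is the disciplined bookkeeping of the cutoff terms $\langle\nabla H,\nabla\eta\rangle,\Delta\eta,\langle A^0,\nabla^2\eta\rangle$ emerging from repeated IBP and the correct deployment of Codazzi--Mainardi so that the second-order cutoff term is contracted with $A^0$ (rather than the full $A$) as required.
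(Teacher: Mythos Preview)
Your argument is correct and, for the first identity, proceeds exactly as the paper does: you integrate by parts twice to move $\Delta$ off $\xi$, split $\xi=-\langle\nabla\CalW_0,\nu\rangle+\lambda\psi$, and then observe that the constant part $c=-2\lambda/\CalV(f)$ of $\lambda\psi$ telescopes to zero while the $aH$-part collapses to $\frac{3\lambda}{\CalA}\int H\Delta H\,\eta$. The paper phrases this as ``the terms involving derivatives of $H$ and the factor $\frac{2\lambda}{\CalV(f)}$ cancel,'' which is your $R_c=0$.

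For the second identity you take a slightly different organizational route: you compute $\partial_t\abs{A}^2$ directly and subtract the already proven $H^2$-identity, whereas the paper invokes the divergence formula $\partial_t(\abs{A^0}^2\diff\mu)=2\nabla_i(\nabla_j\xi\,A^0_{ij})\diff\mu-\nabla_j(\xi\nabla_jH)\diff\mu+\langle\nabla\CalW_0,\xi\nu\rangle\diff\mu$ from \cite[Lemma~2.8]{RuppVolumePreserving}. Both routes land on the same residual $\lambda$-expression $\int\lambda\psi\bigl(\Delta H\,\eta+2\langle\nabla H,\nabla\eta\rangle+2\langle A^0,\nabla^2\eta\rangle\bigr)$, and both reduce it via Codazzi--Mainardi to $\frac{6\lambda}{\CalA}\int\langle\nabla^2H,A^0\rangle\eta$ (the paper records this as \eqref{eq:square}). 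Your approach is marginally more self-contained since it avoids citing the divergence formula; the paper's is a touch shorter because that formula already packages the Codazzi step.
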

\begin{proof}
	This computation is very similar to \FFF \cite[Chapter 4, Lemma 2.8]{Rupp_2022} (see also \cite[Section 3]{KSSI})\EEE~if one replaces $\lambda\nu$ with $\left(\frac{3\lambda}{\CalA(f)}-\frac{2\lambda}{\CalV(f)}\right)\nu$,
	so we will focus on the differences. We will use a local orthonormal frame $\{e_i(t)\}_{i=1,2}$ for our computations and find
%	Using \eqref{eq:dtg}, \eqref{eq:dtH} and \eqref{eq:AA0H} we find
%	\begin{align}
%		\partial_t \left(\frac{1}{2}H^2\diff  \mu\right)  
%		&= - \abs{\nabla\CalW_0(f)}^2\diff \mu + \lambda (\Delta H+ |A^0|^2H)\left(\frac{3}{\CalA(f)} H - \frac{2}{\CalV(f)}\right)\\
%		&\quad + \nabla_i\left(H\nabla_i \xi - \xi\nabla_i H\right)\diff \mu
%	\end{align}
%	Similar as in \cite[Lemma 2.8]{RuppVolumePreserving}, we find
%	We thus find
	\begin{align}
		&\partial_t \int\frac{1}{2}H^2\eta\diff \mu + \int \abs{\nabla\CalW_0(f)}^2\eta\diff\mu \\
		&\quad = \int \left(\frac{3\lambda}{\CalA(f)} H - \frac{2\lambda}{\CalV(f)}\right) \Delta H \eta \diff \mu + \int \left(\frac{3\lambda}{\CalA(f)} H - \frac{2\lambda}{\CalV(f)}\right) \abs{A^0}^{2} H \FFF \eta \EEE \diff \mu\\		
%		\frac{3\lambda}{\CalA(f)}\left[\int\Delta H H \eta\diff \mu + \int|A^0|^2H^2\eta\right]  -\frac{2\lambda}{\CalV(f)}\left[\int\Delta H \eta\diff \mu +\int|A^0|^2 H \eta\diff \mu\right] \\
		&\quad \quad + \int (2\xi \nabla_i H \nabla_i\eta + H\xi\Delta\eta)\diff \mu  +\int\frac{1}{2}H^2\partial_t \eta \diff\mu, \label{eq:L 3.1 1}
		%		&\qquad = \lambda \int (\Delta H + |A^0|^2H)\eta  \diff \mu  + \int\frac{1}{2}H^2\partial_t \eta \diff\mu \\
		%		&\qquad = \lambda \int \langle\nabla\CalW_0(f), \nu\rangle\eta  \diff \mu - 2 \int \langle \nabla\CalW_0(f), \nu\rangle \nabla_i H \nabla_i \eta\diff \mu + 2\lambda \int \nabla_i H\nabla_i \eta\diff \mu \\
		%		&\qquad\quad - \int \langle \nabla\CalW_0(f), \nu\rangle H\Delta\eta + \lambda\int H\Delta\eta\diff \mu +\frac{1}{2}\int H^2\partial_t \eta\diff \mu
	\end{align}
	writing $\partial_t f = \xi\nu$. Moreover, we have
	\begin{align}
		&\int \left(2\xi\nabla_i H\nabla_i \eta + H\xi\Delta\eta\right)\diff \mu \\
		& = -2 \int \langle\nabla \CalW_0(f), \nu\rangle \nabla_i H \nabla_i \eta \diff \mu + \frac{6 \lambda}{\CalA(f)} \int H \nabla_i H \nabla_i \eta\diff \mu - \frac{4\lambda}{\CalV(f)} \int\nabla_i H \nabla_i \eta\diff \mu \\
		&\quad - \int\langle \nabla\CalW_0(f), \nu\rangle H \Delta\eta \diff \mu + \frac{3 \lambda}{\CalA(f)} \int H^2\Delta \eta \diff \mu - \frac{2 \lambda}{\CalV(f)} \int H\Delta \eta\diff\mu. \label{eq:L 3.1 2}
	\end{align}
%	Now, we recall the product rule $\Delta(H\eta) = \Delta H \eta + 2 \nabla_i H \nabla_i \eta + H\Delta\eta$. 
	If we carefully combine the terms with $\lambda$ in \eqref{eq:L 3.1 1} and \eqref{eq:L 3.1 2}, the claim follows after integrating by parts, where the terms involving derivatives of $H$ and the factor $\frac{2\lambda}{\CalV(f)}$ cancel.
%	we conclude
%	\begin{align}
%		&\partial_t \int\frac{1}{2}H^2\eta\diff \mu + \int \abs{\nabla\CalW_0(f)}^2\eta\diff\mu \\
%		&\quad = \frac{3\lambda}{\CalA(f)} \left[ \int H \Delta(H\eta)\diff\mu
%		+ \int |A^0|^2H^2\eta\diff\mu\right]- \frac{2\lambda}{\CalV(f)}\int |A^0|^2H\eta\diff\mu \\
%		&\qquad  -2 \int \langle\nabla \CalW(f), \nu\rangle \nabla_i H \nabla_i \eta \diff \mu - \int\langle \nabla\CalW_0(f), \nu\rangle H \Delta\eta \diff \mu + \int\frac{1}{2}H^2\partial_t \eta \diff\mu.
%	\end{align}
	
	For the second identity, arguing similarly as in \FFF\cite[Chapter 4, Lemma 2.8]{Rupp_2022}\EEE~we have
%	\begin{align}
%		\partial_t\left(|A^{0}|^2\diff \mu\right) = 2 \nabla_i(\nabla_j \xi A^{0}(e_i, e_j)) \diff \mu - \nabla_j\xi \nabla_j H\diff\mu + |A^{0}|^2H\xi\diff \mu,
%	\end{align}
%	and hence we have
	\begin{align}
		\partial_t\left(|A^{0}|^2\diff \mu\right) %&= 2\nabla_i(\nabla_j \xi A^{0}(e_i, e_j)) \diff \mu- \nabla_j(\xi \nabla_j H) \diff \mu + \xi \Delta H \diff \mu + |A^{0}|^2 H\xi\diff \mu  \\
		&= 2\nabla_i(\nabla_j \xi A^{0}(e_i, e_j)) \diff \mu- \nabla_j(\xi \nabla_j H) \diff \mu + \langle\nabla\CalW_0(f), \xi\nu\rangle \diff \mu \\	
		&= 2\nabla_i(\nabla_j \xi A^{0}(e_i, e_j)) \diff \mu- \nabla_j(\xi \nabla_j H) \diff \mu - \abs{\nabla\CalW_0(f)}^2\diff \mu \\
		&\quad + \lambda (\Delta H + |A^0|^2H)\left( \frac{3}{\CalA(f)}H-\frac{2}{\CalV(f)}\right) \diff \mu.
	\end{align}
	Integrating by parts and using \eqref{eq:nabla H A A^0} we conclude
	\begin{align}\label{eq:evolution intA^0eta1}
			&\partial_t \int |A^{0}|^2\eta\diff \mu + \int \abs{\nabla\CalW_0(f)}^2\eta \diff \mu \\
			&\quad = \int \left[ -2 \nabla_j \xi A^{0}_{ij}\nabla_i \eta + \xi \nabla_j H\nabla_j\eta + \lambda (\Delta H + |A^0|^2H)\left( \frac{3}{\CalA(f)}H-\frac{2}{\CalV(f)}\right)\eta \right] \diff \mu  +\int |A^{0}|^2 \partial_t \eta \diff \mu\\
%			&\qquad = \int 2 \xi (\nabla_j A^{0}){(e_i, e_j)}\nabla_i \eta \diff \mu + 2 \xi A^{0}(e_i, e_j)\nabla^2_{ji} \eta \diff \mu + \int \xi \nabla_j H\nabla_j\eta \diff \mu \\
%			&\qquad \quad  + \lambda \int(\Delta H + |A^0|^2H)\left( \frac{3}{\CalA(f)}H-\frac{2}{\CalV(f)}\right)\eta  \diff \mu + \int|A^{0}|^2\partial_t \eta\diff\mu\\
			&\quad = - 2 \int \langle \nabla\CalW_0(f),\nu\rangle A^{0}_{ij}\nabla^2_{ji} \eta \diff \mu + 2\lambda \int \left(\frac{3}{\CalA(f)}H-\frac{2}{\CalV(f)}\right)A^{0}_{ij}\nabla^2_{ij}\eta\diff \mu \\
			& \quad\quad -2\int \langle \nabla\CalW_0(f),\nu\rangle \nabla_i H\nabla_i \eta\diff \mu + 2\lambda\int\left(\frac{3}{\CalA(f)}H-\frac{2}{\CalV(f)}\right) \nabla_i H\nabla_i \eta\diff \mu \\
			&\quad \quad  + \lambda \int(\Delta H + |A^0|^2H)\left( \frac{3}{\CalA(f)}H-\frac{2}{\CalV(f)}\right)\eta  \diff \mu + \int|A^{0}|^2\partial_t \eta\diff\mu.
%		\\	&\qquad = -2\int \langle \nabla\CalW_0(f),\nu\rangle \nabla_i H\nabla_i \eta\diff \mu - 2 \int \langle \nabla\CalW_0(f),\nu\rangle A^{0}_{ij}\nabla^2_{ji} \eta \diff \mu + \int|A^{0}|^2\partial_t \eta\diff\mu\\
%			&\qquad \quad + \frac{3\lambda}{\CalA(f)} \left[2\int H \nabla_i H\nabla_i \eta \diff \mu+2 \int H A^{0}_{ij}\nabla^2_{ij}\eta + \int (\Delta H + |A^0|^2H)H\eta\diff \mu\right] \\
%			&\qquad \quad - \frac{2\lambda}{\CalV(f)} \left[2\int \nabla_i H \nabla_i \eta\diff \mu + 2\int A^{0}_{ij}\nabla^2_{ij}\eta\diff \mu + \int (\Delta H +|A^0|^2H)\eta\diff \mu\right].%\\
			%	&\qquad = -2\int \langle \nabla\CalW_0(f),\nu\rangle \nabla_i H\nabla_i \eta\diff \mu - 2 \int \langle \nabla\CalW_0(f),\nu\rangle A^{0}_{ij}\nabla^2_{ji} \eta \diff \mu \\
			%	&\qquad\quad + \frac{3\lambda}{\CalA(f)} \left[- \int \nabla_i H \nabla_j(2 A^{0}_{ij} \eta)\diff \mu +\int |A^0|^2H^2\eta\diff \mu\right]\\
			%	&\qquad\quad-\frac{2\lambda}{\CalV(f)} \int |A^0|^2 H \eta\diff \mu+ \int|A^{0}|^2\partial_t \eta\diff\mu. \\
			%	&\qquad =
	\end{align}
	Now, using integration by parts and \eqref{eq:nabla H A A^0} once again, we have
	\begin{align}\label{eq:square}
		\int \left(\frac{3\lambda}{\CalA(f)} H -\frac{2\lambda}{\CalV(f)}\right)\left(2\nabla_i H \nabla_i \eta + 2 A^0_{ij}\nabla_{ij}^{2}\eta + \Delta H \eta\right)\diff \mu= \frac{6\lambda}{\CalA(f)} \int \langle \nabla^{2} H , A^{0}\rangle_{g_f} \FFF \eta\EEE\diff \mu.
	\end{align}
	The claim follows.
%
%	The claim follows from integration by parts in the the terms involving $\lambda$.
%	We now observe, that by \eqref{eq:nabla H A A^0} and integration by parts we have
%	\begin{align}\label{eq:evolution intA^0eta2}
%		\begin{split}
%			&2\int H \nabla_i H\nabla_i \eta \diff \mu+2 \int H A^{0}_{ij}\nabla^2_{ij}\eta + \int \Delta H H\eta\diff \mu \\
%			&\quad = -2 \int \nabla_i H A^{0}_{ij} \nabla_j \eta\diff \mu - \int \nabla_i H \nabla_i H \eta\diff \mu \\
%			&\quad = -\int \nabla_i H \nabla_j(2 A^{0}_{ij}\eta)\diff \mu = 2\int \nabla^{2}_{ij} H A^{0}_{ij}\eta\diff\mu.
%		\end{split}
%	\end{align}
%	Thus, combining \eqref{eq:evolution intA^0eta1} and \eqref{eq:evolution intA^0eta2} we have
%	\begin{align}
%		&\partial_t \int |A^0|^2\eta\diff \mu + \int \abs{\nabla\CalW_0(f)}^2\eta\diff \mu \\
%		&\qquad= -2\int \langle \nabla\CalW_0(f),\nu\rangle \nabla_i H\nabla_i \eta\diff \mu - 2 \int \langle \nabla\CalW_0(f),\nu\rangle A^{0}_{ij}\nabla^2_{ji} \eta \diff \mu \\
%		&\qquad\quad + \frac{3\lambda}{\CalA(f)} \left[2\int \nabla^2_{ij} H A^{0}_{ij} \eta\diff \mu +\int |A^0|^2H^2\eta\diff \mu\right]\\
%		&\qquad\quad-\frac{2\lambda}{\CalV(f)} \int |A^0|^2 H \eta\diff \mu+ \int|A^{0}|^2\partial_t \eta\diff\mu. \qedhere
%	\end{align}
\end{proof}

We will now carefully estimate the integrals in \Cref{lem:EvolutionOfCurvatureIntegrals}. To this end, we choose a particular class of test functions. Let $\tilde{\gamma}\in C^{\infty}_c(\R^3)$ with $0\leq \tilde{\gamma}\leq 1$ and assume $\norm{D\tilde{\gamma}}{\infty}\leq \Lambda$, $\norm{D^2\tilde{\gamma}}{\infty}\leq \Lambda^2$ for some $\Lambda>0$. Then setting
\begin{align}
	\gamma&\defeq \tilde{\gamma}\circ f \colon[0,T)\times \Sigma_g\to\R\text{ we find }\\
	\abs{\nabla\gamma}&\leq \Lambda \text{ and }|\nabla^2\gamma| \leq \Lambda^2 + |A|\Lambda \label{eq:gamma},
\end{align}
and note that $\gamma(t, \cdot)$ has compact support in \FFF $\Sigma_g$, which is compact\EEE, for all $0\leq t<T$, see also \cite[(3.1)]{RuppVolumePreserving}.

\emph{For the rest of this article, we denote by $C$ a universal constant with $0<C<\infty$ which may change from line to line.}

\begin{lem}\label{lem:curvatureIntegralsEstimate1} Let $\sigma\in (0,1)$, let $f\colon[0,T)\times \Sigma_g\to\R^3$ be a $\sigma$-isoperimetric Willmore flow and let $\gamma$ be as in \eqref{eq:gamma}. Then we have
	\begin{align}
		&\partial_t \int \abs{A}^2\gamma^4 \diff \mu +\frac{3}{2}\int \abs{\nabla \CalW_0(f)}^2\gamma^4\diff \mu \\
		&\quad \leq \frac{C\abs{\lambda}}{\CalA(f)} \left( \int\langle\nabla^2 H, A\rangle_{g_f}\gamma^4\diff \mu + \int\abs{A}^4\gamma^4\diff \mu + \Lambda \int\abs{A}^3\gamma^3\diff \mu\right) \\
		&\quad \quad + \frac{C\abs{\lambda}}{\abs{\CalV(f)}}\left(\int\abs{A}^3\gamma^4\diff \mu + \Lambda\int\abs{A}^2\gamma^3\diff \mu\right) + C\Lambda^4\int_{[\gamma>0]}\abs{A}^2\diff \mu + C\Lambda^2 \int\abs{A}^4\gamma^2\diff \mu.
	\end{align}
\end{lem}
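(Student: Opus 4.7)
The strategy is to \emph{add} the two evolution identities provided by \Cref{lem:EvolutionOfCurvatureIntegrals} with the test function $\eta=\gamma^4$, exploit the relation $\abs{A}^2=\abs{A^0}^2+\tfrac12 H^2$ on the left, and carefully absorb the error terms on the right. Adding the two identities yields
\begin{align*}
\partial_t\int\abs{A}^2\gamma^4\diff\mu+2\int\abs{\nabla\CalW_0(f)}^2\gamma^4\diff\mu = (\mathrm{I})+(\mathrm{II})+(\mathrm{III})+(\mathrm{IV}),
\end{align*}
where (I) groups the two $\lambda/\CalA(f)$-terms containing $\Delta H$ and $\nabla^2 H$, (II) collects the $|A^0|^2 H$-terms, (III) contains all the error terms carrying $\nabla\eta,\nabla^2\eta$ or $\Delta\eta$, and (IV) contains the two $\partial_t\eta$ terms. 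The target is to absorb $\tfrac12\int\abs{\nabla\CalW_0(f)}^2\gamma^4$ from (III)+(IV) into the left-hand side, leaving the factor $\tfrac32$ in the statement.

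\textbf{Step (I) -- the algebraic cancellation.} The two contributions $\tfrac{3\lambda}{\CalA(f)}\int H\Delta H\,\gamma^4\diff\mu$ and $\tfrac{6\lambda}{\CalA(f)}\int\langle\nabla^2 H,A^0\rangle_{g_f}\gamma^4\diff\mu$ combine via the pointwise identity $H\Delta H+2\langle\nabla^2 H,A^0\rangle_{g_f}=2\langle\nabla^2 H,A\rangle_{g_f}$ (coming from $A=A^0+\tfrac{H}{2}g_f$). This produces precisely the term $\tfrac{6\lambda}{\CalA(f)}\int\langle\nabla^2 H,A\rangle_{g_f}\gamma^4\diff\mu$, which, after passing to $\abs\lambda$, matches the first summand in the claimed bound.

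\textbf{Step (II) -- the cubic/quartic curvature terms.} Adding the two copies of $\int\bigl(\tfrac{3\lambda}{\CalA(f)}H-\tfrac{2\lambda}{\CalV(f)}\bigr)\abs{A^0}^2 H\gamma^4\diff\mu$ and using $\abs{A^0}^2 H^2\le\abs{A}^4$ and $\abs{A^0}^2\abs{H}\le\abs{A}^3$ yields the remaining $\abs\lambda/\CalA(f)$- and $\abs\lambda/\abs{\CalV(f)}$-terms with test function $\gamma^4$.

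\textbf{Step (III) -- absorbing the $\nabla\CalW_0$ error terms.} Using $\eta=\gamma^4$ together with $|\nabla\eta|\le 4\Lambda\gamma^3$ and $\abs{\nabla^2\eta}+\abs{\Delta\eta}\le C(\Lambda^2\gamma^2+\Lambda\abs{A}\gamma^3)$ from \eqref{eq:gamma}, the error terms $-\int\langle\nabla\CalW_0(f),\nu\rangle H\Delta\eta$ and $-2\int\langle\nabla\CalW_0(f),\nu\rangle\langle A^0,\nabla^2\eta\rangle_{g_f}$ are split by Young's inequality into a small multiple of $\int\abs{\nabla\CalW_0(f)}^2\gamma^4$ plus $C\Lambda^4\int_{[\gamma>0]}\abs{A}^2$ (from the $\Lambda^2\gamma^2$ piece) and $C\Lambda^2\int\abs{A}^4\gamma^2$ (from the $\Lambda\abs A\gamma^3$ piece). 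The delicate term is $-4\int\langle\nabla\CalW_0(f),\nu\rangle\langle\nabla H,\nabla\eta\rangle_{g_f}$, which after Young gives an auxiliary term $C\Lambda^2\int\abs{\nabla H}^2\gamma^2$. The main obstacle of the proof is to re-absorb this gradient term: integrate by parts to obtain
\begin{align*}
\int\abs{\nabla H}^2\gamma^2\diff\mu = -\int H\Delta H\gamma^2\diff\mu-2\int H\gamma\langle\nabla H,\nabla\gamma\rangle_{g_f}\diff\mu,
\end{align*}
bound the second summand again by Young (absorbing $\tfrac12\int|\nabla H|^2\gamma^2$), and bound $\abs{\Delta H}\le\abs{\nabla\CalW_0(f)}+\abs{A}^3$ to control the first. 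With $H^2\le 2\abs{A}^2$ and a careful choice of the Young parameters ($\epsilon\sim\Lambda^{-2}$), the resulting bound is $\int\abs{\nabla H}^2\gamma^2\lesssim\Lambda^{-2}\int\abs{\nabla\CalW_0(f)}^2\gamma^4+\Lambda^2\int_{[\gamma>0]}\abs{A}^2+\int\abs{A}^4\gamma^2$, which after multiplication with $C\Lambda^2$ produces exactly the desired $C\Lambda^4\int_{[\gamma>0]}\abs{A}^2$ and $C\Lambda^2\int\abs{A}^4\gamma^2$ contributions while leaving a \emph{dimensionless} coefficient in front of $\int\abs{\nabla\CalW_0(f)}^2\gamma^4$ that can be absorbed on the left.

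\textbf{Step (IV) -- the $\partial_t\eta$ terms.} Writing $\partial_t\eta=4\gamma^3\langle D\tilde\gamma\circ f,\partial_t f\rangle$ and using the flow equation to decompose $\partial_t f=-\langle\nabla\CalW_0(f),\nu\rangle\nu+\lambda\bigl(\tfrac{3}{\CalA(f)}H-\tfrac{2}{\CalV(f)}\bigr)\nu$, we obtain from $\tfrac12 H^2+\abs{A^0}^2=\abs{A}^2$ the estimate
\begin{align*}
\left|\int\tfrac12 H^2\partial_t\eta+\int\abs{A^0}^2\partial_t\eta\right|\le 4\Lambda\int\abs{A}^2\gamma^3\Bigl(\abs{\nabla\CalW_0(f)}+\tfrac{3\abs\lambda}{\CalA(f)}\abs{H}+\tfrac{2\abs\lambda}{\abs{\CalV(f)}}\Bigr)\diff\mu.
\end{align*}
A final application of Young's inequality to the $\nabla\CalW_0$-factor (producing $C\Lambda^2\int\abs{A}^4\gamma^2$) together with $\abs{H}\le C\abs{A}$ yields precisely the remaining terms $\tfrac{C\Lambda\abs\lambda}{\CalA(f)}\int\abs{A}^3\gamma^3$ and $\tfrac{C\Lambda\abs\lambda}{\abs{\CalV(f)}}\int\abs{A}^2\gamma^3$ in the claimed estimate. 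Collecting (I)--(IV) and moving the absorbed $\tfrac12\int\abs{\nabla\CalW_0(f)}^2\gamma^4$ to the left concludes the proof.
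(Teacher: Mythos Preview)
Your proof is correct and follows essentially the same route as the paper: add the two identities from \Cref{lem:EvolutionOfCurvatureIntegrals} with $\eta=\gamma^4$, combine the $\Delta H$ and $\nabla^2 H$ terms via $A=A^0+\tfrac{H}{2}g_f$, handle $\partial_t\gamma^4$ through the flow equation, and absorb the $\nabla\CalW_0$-error terms by Young's inequality with $\varepsilon=\tfrac12$. The only difference is that where the paper simply cites \cite[Lemma~3.2]{KSSI} for the terms $\int\langle\nabla\CalW_0(f),\nu\rangle\langle\nabla H,\nabla\gamma^4\rangle_{g_f}\diff\mu$ and $\int\langle\nabla\CalW_0(f),\nu\rangle\langle\nabla^2\gamma^4,A\rangle_{g_f}\diff\mu$, you have written out that argument explicitly in Step~(III), including the integration-by-parts reduction of $\int\abs{\nabla H}^2\gamma^2\diff\mu$; this is the same estimate, just unpacked.
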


\begin{proof}
	We have $2 \langle \nabla^2 \varphi, A\rangle_{g_f} = 2 \langle \nabla^2\varphi, A^0\rangle_{g_f} + H \Delta \varphi$ for any $\varphi\in C^{\infty}([0,T)\times \Sigma_g)$ by a direct computation in a local orthonormal frame. Hence, using \Cref{lem:EvolutionOfCurvatureIntegrals} and $\abs{A}^2 = \abs{A^0}^2+\frac{1}{2}H^2$, cf.\ \eqref{eq:AA0H}, we find
	\begin{align}
		&\partial_t \int\abs{A}^2\gamma^4\diff \mu + 2 \int\abs{\nabla\CalW_0(f)}^2\gamma^4 \diff \mu\\
		&=\frac{6 \lambda}{\CalA(f)}\left(\int\langle \nabla^2H, A\rangle_{g_f} \gamma^4\diff \mu + \int \abs{A^0}^2H^2\gamma^4\diff \mu\right)  - \frac{4\lambda}{\CalV(f)} \int\abs{A^0}^2H\gamma^4\diff \mu \\
		&\quad - 4 \int \langle \nabla\CalW_0(f),\nu\rangle \langle \nabla H, \nabla\gamma^4 \rangle_{g_f}\diff \mu  - 2\int\langle \nabla\CalW_0(f),\nu \rangle\langle\nabla^2\gamma^4, A\rangle_{g_f}\diff \mu + \int\abs{A}^2  \partial_t \gamma^4\diff \mu.\label{eq:dt A^2 local1}
	\end{align}
%	where we also used $2 \langle \nabla^2 \varphi, A\rangle_{g_f} = 2 \langle \nabla^2\varphi, A^0\rangle_{g_f} + H \Delta \varphi$ for $\varphi\in\{ H, \gamma^4\}$.

	The terms $\int \langle\nabla\CalW_0(f),\nu \rangle \langle \nabla H, \nabla\gamma^4 \rangle_{g_f}\diff \mu$ and $\int \langle \nabla\CalW_0(f),\nu\rangle \langle\nabla^2\gamma^4, A\rangle_{g_f}\diff \mu$ can be estimated as in \cite[Lemma 3.2]{KSSI}.
	Since $\partial_t \gamma^4 = 4\gamma^3 \nabla\tilde{\gamma}\circ f \partial_t f$  we have by \eqref{eq:gamma}
	\begin{align}
		\abs{\partial_t \gamma^4}\leq C \Lambda\gamma^3\left(\abs{\nabla\CalW_0(f)}+\frac{\abs{\lambda}}{\CalA(f)}\abs{A}+\frac{\abs{\lambda}}{\abs{\CalV(f)}}\right).
	\end{align}
	Consequently, we find 
	\begin{align}
		\int\abs{A}^2\partial_t \gamma^4 \diff \mu &\leq \varepsilon \int\abs{\nabla\CalW_0(f)}^2\gamma^{4}\diff \mu + C(\varepsilon)\Lambda^{2} \int \abs{A}^{4}\gamma^{2}\diff \mu \\
		&\quad  +C\Lambda \frac{\abs{\lambda}}{\CalA(f)}\int\abs{A}^{3}\gamma^{3}\diff \mu +C \Lambda\frac{\abs{\lambda}}{\abs{\CalV(f)}}\int \abs{A}^{2}\gamma^{3}\diff\mu.
	\end{align}
	Choosing \FFF $\varepsilon>0$ small enough\EEE, the claim follows from the estimates above.
\end{proof}
Note that on the right hand side of \Cref{lem:curvatureIntegralsEstimate1}, terms involving the Lagrange multiplier multiplied with powers of $A$ up to $4$-th order and even second derivatives of $H$ appear. With the energy, we can only control the $L^2$-norms of $H$ and $A$. In the following \Cref{prop:curvatureIntegralsEstimate} we will close this gap by using higher powers of the Lagrange multiplier, the area and the volume, see also \cite[Proposition 3.3]{RuppVolumePreserving}; these powers behave correctly under rescaling, cf.\ \Cref{lem:scaling}.
We will combine this with the interpolation techniques from \cite{KSGF}, \cite{KSSI} to get control on the local $W^{2,2}$-norm of $A$, in terms of the (localized) Willmore gradient, at least if the $L^2$-norm of $A$ is locally small.

\begin{prop}\label{prop:curvatureIntegralsEstimate} There exist universal constants $\varepsilon_0,c_0, C\in (0,\infty)$ with the following property: Let $\sigma\in(0,1)$, let $f\colon[0,T)\times \Sigma_g\to\R^3$ be a $\sigma$-isoperimetric Willmore flow and let $\gamma$ be as in \eqref{eq:gamma}. If we have
	\begin{align}\label{eq:prop 33 assumption}
		\int_{[\gamma>0]}\abs{A}^2\diff \mu <\varepsilon_0 \quad\text{for some time }t\in[0,T),
	\end{align}
	then at time $t$ we can estimate
	\begin{align}
		&\partial_t \int\abs{A}^2\gamma^4 \diff \mu+c_0\int \left(\abs{\nabla^2 A}^2+\abs{A}^2\abs{\nabla A}^2 + \abs{A}^6\right)\gamma^4\diff \mu\\
		&\leq  C \Lambda^4 \int_{[\gamma>0]}\abs{A}^2\diff \mu  + C   \left( \frac{{\lambda}^2}{\CalA(f)^2} +	\frac{\abs{\lambda}^{\frac{4}{3}}}{\abs{\CalV(f)}^{\frac{4}{3}}} \right) \int \abs{A}^2\gamma^4\diff \mu.
	\end{align}
  Here $\int_{[\gamma_0>0]}\abs{A_0}^2\diff \mu_0 \defeq \int_{[\gamma>0]}\abs{A}^2\diff \mu \vert_{t=0}$.
\end{prop}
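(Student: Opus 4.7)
The plan is to start from the differential inequality of \Cref{lem:curvatureIntegralsEstimate1} and bound each $\lambda$-dependent term on its right-hand side by Young's inequality, choosing conjugate exponents so that the only scale-invariant coefficients we generate are precisely $\lambda^2/\CalA(f)^2$ and $|\lambda|^{4/3}/|\CalV(f)|^{4/3}$. The bookkeeping will then be closed by invoking the Kuwert--Schätzle interpolation inequality: under the small-concentration assumption \eqref{eq:prop 33 assumption} with $\varepsilon_0$ sufficiently small,
\begin{align*}
\int\bigl(|\nabla^2 A|^2 + |A|^2|\nabla A|^2 + |A|^6\bigr)\gamma^4 \diff\mu \leq C\int |\nabla \CalW_0(f)|^2 \gamma^4 \diff\mu + C\Lambda^4 \int_{[\gamma>0]}|A|^2\diff\mu,
\end{align*}
essentially as in \cite[Proposition 4.5]{KSGF}, \cite[Section 3]{KSSI}, and \cite[Proposition 3.3]{RuppVolumePreserving}.

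Next, each of the five $\lambda$-terms in \Cref{lem:curvatureIntegralsEstimate1} will be handled separately. For the contributions with prefactor $|\lambda|/\CalA(f)$, Cauchy--Schwarz together with Young's inequality at conjugate exponents $(2,2)$ naturally produces the coefficient $\lambda^2/\CalA(f)^2$; this turns $\frac{|\lambda|}{\CalA(f)}\int\langle\nabla^2 H,A\rangle_{g_f}\gamma^4\diff\mu$ into an $\epsilon$-small $\int|\nabla^2 A|^2\gamma^4\diff\mu$ residual, and --- after the splitting $\frac{|\lambda|}{\CalA(f)}|A|^4\gamma^4 = \bigl(\frac{|\lambda|}{\CalA(f)}|A|\bigr)\cdot |A|^3\gamma^4$ --- turns $\frac{|\lambda|}{\CalA(f)}\int|A|^4\gamma^4\diff\mu$ into an $\epsilon$-small $\int|A|^6\gamma^4\diff\mu$ residual. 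For the contributions with prefactor $|\lambda|/|\CalV(f)|$ we instead use the conjugate exponents $(4/3,4)$ and the splitting $\frac{|\lambda|}{|\CalV(f)|}|A|^3\gamma^4 = \bigl(\frac{|\lambda|}{|\CalV(f)|}|A|^{3/2}\gamma^3\bigr)\cdot|A|^{3/2}\gamma$, which yields $\bigl(\frac{|\lambda|}{|\CalV(f)|}\bigr)^{4/3}|A|^2\gamma^4$ plus a small $|A|^6\gamma^4$ piece; the analogous $(4/3,4)$-splitting of $\frac{|\lambda|}{|\CalV(f)|}\Lambda|A|^2\gamma^3$ peels off a factor $\Lambda|A|^{1/2}$ whose fourth power is the admissible $\Lambda^4|A|^2$. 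The remaining two genuinely localisation-type terms $\frac{|\lambda|}{\CalA(f)}\Lambda\int|A|^3\gamma^3\diff\mu$ and $\Lambda^2\int|A|^4\gamma^2\diff\mu$ are treated by the elementary AM--GM pairings $\frac{|\lambda|}{\CalA(f)}\Lambda|A|^3\gamma^3 \leq \tfrac{1}{2}\frac{\lambda^2}{\CalA(f)^2}|A|^2\gamma^4 + \tfrac{1}{2}\Lambda^2|A|^4\gamma^2$ and $\Lambda^2|A|^4\gamma^2 \leq \epsilon|A|^6\gamma^4 + C\Lambda^4|A|^2$.

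Summing all these estimates and using the interpolation inequality above to re-express the small multiples of $\int(|\nabla^2 A|^2 + |A|^6)\gamma^4\diff\mu$ by $\int|\nabla\CalW_0(f)|^2\gamma^4\diff\mu$ (plus admissible $\Lambda^4\int_{[\gamma>0]}|A|^2\diff\mu$ contributions), the net small factor in front of $\int|\nabla\CalW_0(f)|^2\gamma^4\diff\mu$ can be absorbed into the $\tfrac{3}{2}\int|\nabla\CalW_0(f)|^2\gamma^4\diff\mu$ already present on the left of \Cref{lem:curvatureIntegralsEstimate1}; a final application of the interpolation inequality converts the surviving positive $|\nabla\CalW_0|^2\gamma^4$-term into the claimed $c_0\int(|\nabla^2 A|^2 + |A|^2|\nabla A|^2 + |A|^6)\gamma^4\diff\mu$. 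The main subtlety --- and in fact the forcing constraint on the whole argument --- is the choice of exponents in the Young decompositions: only the powers $2$ and $4/3$ yield scale-invariant coefficients matching the time-integrals in \Cref{lem:scaling}(iii), so any other pairing would produce terms that cannot be controlled along a blow-up sequence.
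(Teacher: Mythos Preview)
Your proposal is correct and follows essentially the same route as the paper: start from \Cref{lem:curvatureIntegralsEstimate1}, use Young's inequality with exponents $(2,2)$ on the $\tfrac{|\lambda|}{\CalA(f)}$-terms and $(4,\tfrac{4}{3})$ on the $\tfrac{|\lambda|}{|\CalV(f)|}$-terms to produce exactly the coefficients $\tfrac{\lambda^2}{\CalA(f)^2}$ and $\tfrac{|\lambda|^{4/3}}{|\CalV(f)|^{4/3}}$, and close via the Kuwert--Sch\"atzle interpolation inequality under the smallness assumption. The only cosmetic difference is the order of operations: the paper applies the interpolation inequality once at the outset to replace $\tfrac{3}{2}\int|\nabla\CalW_0(f)|^2\gamma^4\diff\mu$ on the left by $2c_0\int(|\nabla^2 A|^2+|A|^2|\nabla A|^2+|A|^6)\gamma^4\diff\mu$ and then absorbs the $\varepsilon$-small residuals directly, whereas you apply interpolation twice (first to re-express the residuals, then again to convert the surviving good term). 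Both orderings are valid and yield the same estimate.
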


\begin{proof}
	Using the assumption and the interpolation inequality in \cite[Proposition 2.6]{KSSI} (see also \cite[Proposition 3.2]{RuppVolumePreserving}), we have at time $t\in[0,T)$
	\begin{align}
		\int \left(\abs{\nabla^2 A}^2+\abs{A}^2\abs{\nabla A}^2 + \abs{A}^6\right)\gamma^4\diff \mu \leq C \int \abs{\nabla \CalW_0(f)}^2\gamma^4\diff \mu + C\Lambda^4 \int_{[\gamma>0]}\abs{A}^2\diff \mu.
	\end{align}
	Consequently, from \Cref{lem:curvatureIntegralsEstimate1}, we find for some $c_0\in (0,\infty)$
	\begin{align}
		&\partial_t \int \abs{A}^2\gamma^4 \diff \mu +2c_0	\int \left(\abs{\nabla^2 A}^2+\abs{A}^2\abs{\nabla A}^2 + \abs{A}^6\right)\gamma^4\diff \mu  \\
		&\quad \leq \frac{C\abs{\lambda}}{\CalA(f)} \left( \int\langle\nabla^2 H, A\rangle_{g_f}\gamma^4\diff \mu + \int\abs{A}^4\gamma^4\diff \mu + \Lambda \int\abs{A}^3\gamma^3\diff \mu\right) \\
		&\quad \quad + \frac{C\abs{\lambda}}{\abs{\CalV(f)}}\left(\int\abs{A}^3\gamma^4\diff \mu + \Lambda\int\abs{A}^2\gamma^3\diff \mu\right)+ C\Lambda^2 \int\abs{A}^4 \gamma^2\diff \mu  + C\Lambda^4\int_{[\gamma>0]}\abs{A}^2\diff \mu.\label{eq: dt A^2 local 2}
	\end{align}
	For the first term on the right hand side of \eqref{eq: dt A^2 local 2}, we infer using Young's inequality
	\begin{align}
		&\frac{\abs{\lambda}}{\CalA(f)} \left( \int\langle\nabla^2 H, A\rangle_{g_f}\gamma^4\diff \mu + \int\abs{A}^4\gamma^4\diff \mu + \Lambda \int\abs{A}^3\gamma^3\diff \mu\right) \\
		&\quad \leq \varepsilon \int\abs{\nabla^2 A}^2\gamma^4\diff \mu + \varepsilon \int\abs{A}^6\gamma^4\diff \mu + \Lambda^2 \int\abs{A}^4\gamma^2\diff \mu  +C(\varepsilon)\frac{\abs{\lambda}^2}{\CalA(f)^2} \int\abs{A}^2\gamma^4\diff \mu.\label{eq:dt A^2 local 3}
	\end{align}
	The second term on the right hand side of \eqref{eq: dt A^2 local 2} can be estimated by using Young's inequality with $p=4$ and $q=\frac{4}{3}$ and $\gamma\leq 1$ to obtain
	\begin{align}
		&\frac{C\abs{\lambda}}{\abs{\CalV(f)}}\left(\int\abs{A}^3\gamma^4\diff \mu + \Lambda\int\abs{A}^2\gamma^3\diff \mu\right) \\
		&\quad \leq \varepsilon \int\abs{A}^6\gamma^4\diff\mu + C(\varepsilon) \frac{\abs{\lambda}^{\frac{4}{3}}}{\abs{\CalV(f)}^{\frac{4}{3}}} \int\abs{A}^2\gamma^4\diff \mu + C\Lambda^{4}\int_{[\gamma>0]}\abs{A}^2\diff \mu.\label{eq:dt A^2 local 4}
%		&\quad \leq \varepsilon \int\abs{A}^6\gamma^4\diff\mu + C(\varepsilon)  \frac{\lambda^2 +  \sigma^{-2}}{\CalA(f)^2} \int\abs{A}^2\gamma^4\diff \mu + C\Lambda^{4}\int_{[\gamma>0]}\abs{A}^2\diff \mu, 		
	\end{align}
%	where we used \eqref{eq:dtI=0} and Young's inequality (with $p=\frac{3}{2}$ and $q=3$) to estimate
%	\begin{align}\label{eq:lambda^4/3 vs lambda^2}
%		\frac{\abs{\lambda}^{\frac{4}{3}}}{\abs{\CalV(f)}^{\frac{4}{3}}}= \frac{\left(6\sqrt{\frac{\pi}{\sigma}}\right)^{\frac{4}{3}}}{\CalA(f)^2}\abs{\lambda}^{\frac{4}{3}}\sigma^{-\frac{2}{3}}\leq \frac{C}{\CalA(f)^2} (\lambda^2+ \sigma^{-2}).
%	\end{align}
	Moreover, we have the estimate
	$C\Lambda^2 \int\abs{A}^4 \gamma^2\diff \mu \leq \varepsilon \int\abs{A}^6\gamma^4\diff \mu + C(\varepsilon)\Lambda^4 \int_{[\gamma>0]}\abs{A}^2\diff \mu$ using Young's inequality. Combining this with \eqref{eq: dt A^2 local 2},\eqref{eq:dt A^2 local 3} and \eqref{eq:dt A^2 local 4} and choosing $\varepsilon>0$ sufficiently small, the claim follows.
\end{proof}

Assumption \eqref{eq:prop 33 assumption} means that the second fundamental form is small on the support of $\gamma$. Note that this will only be satisfied locally, since by \eqref{eq:A^2GaussBonnet} we always have $\int\abs{A}^2\diff \mu \in [8\pi, 4 \CalW(f)-8\pi+ 8\pi g]$. We will now study the situation, where \eqref{eq:prop 33 assumption} is satisfied on all balls with a certain radius, yielding a control over the \emph{concentration of the Willmore energy} in $\R^3$. %We will now study this concentration on balls in $\R^3$. 
 Following \cite{KSRemovability} we introduce the following notation.

\begin{defi}\label{def:curvature concentration function}
	For a smooth family of immersions $f \colon [0,T)\times \Sigma_g\to\R^3$, $t\in [0,T)$, $r>0$, we define the \emph{curvature concentration function} 
	\begin{align}\label{eq:DefConcentrationFunction}
		\kappa(t,r)\defeq \sup_{x\in \R^3} \int_{B_r(x)}	|A|^2\diff \mu.
	\end{align}
\end{defi}
Here and in the rest of this article, we follow the notation of \cite{KSGF}, i.e.\ the integrals over balls $B_r(x)\subset \R^3$ have to be understood over the preimages under $f_t$.

If $\Gamma>1$ denotes the minimal number of balls of radius $1/2$ necessary to cover $B_1(0)\subset \R^3$, then
\begin{align}\label{eq:GammaEstimate1/2}
	\kappa(t, \rho)\leq \Gamma\cdot \kappa(t, \rho/2)\quad \text{for all }0\leq t<T.
\end{align}

We now prove an integrated form of \Cref{prop:curvatureIntegralsEstimate}.

\begin{prop}\label{prop:t IntegratedIntEstimate}
	Let $\varepsilon_0>0$ be as in \Cref{prop:curvatureIntegralsEstimate}.	
	There exist universal constants $\varepsilon_1 \in (0,\varepsilon_0), c_0, C>0$ with the following property: Let $\sigma\in (0,1)$, let $f\colon[0,T)\times \Sigma_g\to\R^3$ be a $\sigma$-isoperimetric Willmore flow and let $\rho>0$ be such that
	\begin{align}\label{eq:prop 43 assumption}
		\kappa(t, \rho) <\varepsilon_1 \quad\text{for all }t\in[0,T).
	\end{align}
	Then for all $x\in \R^3$ and $t\in [0,T)$ we have
	\begin{align}
		&\left.\int_{B_{\rho/2}(x)}\abs{A}^2 \diff \mu\right\vert_{t}+c_0\int_0^t\int_{B_{\rho/2}(x)} \left(\abs{\nabla^2 A}^2+\abs{A}^2\abs{\nabla A}^2 + \abs{A}^6\right)\diff \mu\diff \tau\\
		&\leq \int_{B_{\rho}(x)}\abs{A_0}^2\diff \mu_0+ \frac{C(1+\sigma^{-2})}{\rho^4}\int_0^t  \int_{B_{\rho}(x)}\abs{A}^2\diff \mu \diff \tau  + C   \int_0^t  \frac{{\lambda}^2}{\CalA(f)^2}  \int_{B_{\rho}(x)} \abs{A}^2\diff \mu\diff \tau.
	\end{align}
\end{prop}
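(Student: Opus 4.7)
The plan is to apply \Cref{prop:curvatureIntegralsEstimate} with a spatial cutoff localized to $B_\rho(x)$, integrate in time, and then absorb the non-local Lagrange-multiplier term $|\lambda|^{4/3}/|\CalV(f)|^{4/3}$ using the isoperimetric identity together with Simon's monotonicity formula.

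I would first choose $\tilde\gamma \in C_c^\infty(\R^3)$ with $0\leq \tilde\gamma \leq 1$, $\tilde\gamma \equiv 1$ on $B_{\rho/2}(x)$, $\supp\tilde\gamma \subset B_\rho(x)$, and $\norm{D^k\tilde\gamma}{\infty}\leq C\rho^{-k}$ for $k=1,2$, so that $\gamma\defeq \tilde\gamma\circ f$ satisfies \eqref{eq:gamma} with $\Lambda=C/\rho$. Picking $\varepsilon_1\leq \min(\varepsilon_0,4\pi)$, the hypothesis \eqref{eq:prop 43 assumption} guarantees $\int_{[\gamma(t,\cdot)>0]}|A|^2\diff\mu \leq \kappa(t,\rho) < \varepsilon_1\leq \varepsilon_0$ at every $t\in[0,T)$, so \Cref{prop:curvatureIntegralsEstimate} applies pointwise in $t$. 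Integrating the resulting inequality over $[0,t]$, the LHS (using $\gamma\equiv 1$ on $B_{\rho/2}(x)$) produces the desired quantities on $B_{\rho/2}(x)$; the initial term is controlled by $\int_{B_\rho(x)}|A_0|^2\diff\mu_0$ and the $\Lambda^4$-piece by $C\rho^{-4}\int_0^t\int_{B_\rho(x)}|A|^2\diff\mu\diff\tau$.

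Next, to eliminate the factor $|\lambda|^{4/3}/|\CalV(f)|^{4/3}$, I would use that $\CalI(f)\equiv\sigma$ gives the identity $|\CalV(f)|^{4/3}=(\sigma/36\pi)^{2/3}\CalA(f)^2$, so Young's inequality with exponents $3/2, 3$ applied to the product $|\lambda|^{4/3}\cdot\sigma^{-2/3}$ yields
\begin{align*}
\frac{|\lambda|^{4/3}}{|\CalV(f)|^{4/3}} \leq \frac{C\lambda^2}{\CalA(f)^2}+\frac{C\sigma^{-2}}{\CalA(f)^2}.
\end{align*}
The first summand is absorbed directly into the $\int \lambda^2/\CalA^2$ contribution of the target estimate; the second requires turning $\CalA(f)^{-2}$ into $\rho^{-4}$, which reduces to showing $\CalA(f(\tau,\cdot))\geq c\rho^2$ at every time $\tau\in[0,t]$ at which the local curvature integral $\int_{B_\rho(x)}|A(\tau,\cdot)|^2\diff\mu_\tau$ is non-zero.

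This final ingredient is provided by Simon's monotonicity formula: picking any $x_0\in f_\tau(\Sigma)\cap B_\rho(x)$, it yields $\mu_\tau(B_\rho(x_0))\geq \rho^2(\pi-\tfrac{1}{16}\int_{B_\rho(x_0)}|H|^2\diff\mu_\tau)$. Since $B_\rho(x_0)\subset B_{2\rho}(x)$, a covering by finitely many balls of radius $\rho$ together with \eqref{eq:GammaEstimate1/2} and $|H|^2\leq 2|A|^2$ gives $\int_{B_\rho(x_0)}|H|^2\diff\mu_\tau\leq C\varepsilon_1$, so after a further universal shrinking of $\varepsilon_1$ one obtains $\CalA(f(\tau,\cdot))\geq \mu_\tau(B_\rho(x_0))\geq c\rho^2$. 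The main obstacle lies exactly in this step: extracting the $\rho^{-4}$-scaling from $\CalA(f)^{-2}$ while keeping all constants genuinely universal, with no hidden dependence on $\CalW(f_0)$ or the genus. A careless choice of weights in Young's inequality would pollute the good $\lambda^2/\CalA^2$ coefficient with factors of $\sigma^{-2/3}$; isolating $\sigma^{-2/3}$ as the weight \emph{before} splitting is what confines all $\sigma$-dependence to the constant remainder, which the monotonicity-based area lower bound then matches to $\rho^{-4}$.
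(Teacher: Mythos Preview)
Your argument is correct and follows essentially the same route as the paper: localize with a standard cutoff, integrate \Cref{prop:curvatureIntegralsEstimate} in time, convert the $|\lambda|^{4/3}/|\CalV|^{4/3}$ term via the isoperimetric identity and Young's inequality with exponents $3/2,3$, and then invoke Simon's monotonicity to obtain $\CalA(f)\geq c\rho^2$. The only available simplification is that your covering step through $B_{2\rho}(x)$ is unnecessary: since $\kappa(t,\rho)$ is a supremum over \emph{all} balls of radius $\rho$, the bound $\int_{B_\rho(x_0)}|H|^2\diff\mu_\tau \leq 2\kappa(t,\rho)<2\varepsilon_1$ holds directly for any $x_0\in f_\tau(\Sigma)$.
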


\begin{proof}%[Proof of \Cref{prop:t IntegratedIntEstimate}]
	Fix $x\in \R^3$. Let $\tilde{\gamma}\in C^{\infty}_c(\R^3)$ be a cutoff function with $\chi_{B_{\rho/2}(x)}\leq \tilde{\gamma} \leq \chi_{B_\rho(x)}$, $\norm{D\tilde{\gamma}}{\infty}\leq \frac{C}{\rho}$ and $\norm{D^2\tilde{\gamma}}{\infty}\leq \frac{C}{\rho^2}$. Therefore, $\gamma\defeq\tilde{\gamma}\circ f$ is as in \eqref{eq:gamma} with $\Lambda=\frac{C}{\rho}$.	
	 Moreover, if we take $\varepsilon_1>0$ small enough, we have the estimate 
	 \begin{align}\label{eq:rho^2<= A}
	 	\rho^2\leq C \CalA(f)
	 \end{align}
	 as a consequence of Simon's monotonicity formula \cite{SimonWillmore}, see also \cite[Lemma 4.1]{RuppVolumePreserving}. Now, since we have $ \abs{\CalV(f)}^{\frac{4}{3}} = \left(\frac{\sigma}{36\pi}\right)^{\frac{2}{3}}\CalA(f)^2$ by \eqref{eq:dtI=0} and \eqref{eq:defI}, we observe 
	\begin{align}	
		\frac{\abs{\lambda}^{\frac{4}{3}}}{\abs{\CalV(f)}^{\frac{4}{3}}}= \frac{\left(36\pi\right)^{\frac{2}{3}}}{\CalA(f)^2}\abs{\lambda}^{\frac{4}{3}}\sigma^{-\frac{2}{3}}\leq \frac{C(\lambda^2+ \sigma^{-2})}{\CalA(f)^2} \leq C\left(\frac{\lambda^2}{\CalA(f)^2}+\frac{\sigma^{-2}}{\rho^{4}}\right), \label{eq:estimate L^4/3 by L^2 0}
	\end{align}
	where we used Young's inequality (with $p=\frac{3}{2}$ and $q=3$). The statement then immediately follows by integrating \Cref{prop:curvatureIntegralsEstimate} in time.	
\end{proof}

\begin{remark}
	If we directly integrate \Cref{prop:curvatureIntegralsEstimate}, we have to deal with two terms involving $\lambda$, both of whose time integrals behave correctly under parabolic rescaling, cf.\ \Cref{lem:scaling}.
	The estimate \eqref{eq:estimate L^4/3 by L^2 0} above reveals that if \eqref{eq:prop 43 assumption} is satisfied, then it suffices to control merely the $\frac{\lambda^2}{\CalA(f)^2}$-term, since
		\begin{align}	
		\frac{\abs{\lambda}^{\frac{4}{3}}}{\abs{\CalV(f)}^{\frac{4}{3}}}\leq C\left(\frac{\lambda^2}{\CalA(f)^2}+\frac{\sigma^{-2}}{\rho^{4}}\right). \label{eq:estimate L^4/3 by L^2}
	\end{align}
\end{remark}

%Equation \eqref{eq:estimate L^4/3 by L^2} yields that under the assumption of small curvature concentration around \emph{all points} $x\in \R^3$, the $\frac{\lambda^2}{\CalA(f)^2}$-term dominates the term $	\frac{\abs{\lambda}^{\frac{4}{3}}}{\abs{\CalV(f)}^{\frac{4}{3}}}$ in \Cref{prop:curvatureIntegralsEstimate}.

%and as in \cite[p. 348]{KSRemovability} we have the estimates
%\begin{align}\label{eq:kappa continuity}
%	\lim_{r\nearrow\rho}\kappa(t, r) = \kappa(t, \rho)\leq 	\liminf_{r\searrow \rho}\kappa(t,r) \leq \limsup_{r\searrow \rho}\kappa(t,r)\leq \Gamma \kappa(t, \rho).
%\end{align}

For the blow-up construction in \Cref{sec:blowup}, we will need the following higher order estimates for the flow in the case of non-concentrated curvature, cf.\ \cite[Theorem 3.5]{KSSI}, \FFF\cite[Proposition 3.5]{RuppVolumePreserving}.\EEE

\begin{prop}\label{prop:high ord small conc}
	Let $\sigma\in(0,1)$ and let $f\colon[0,T)\times \Sigma_g\to\R^3$ be a $\sigma$-isoperimetric Willmore flow. Suppose $\rho>0$ is chosen such that $T\leq T^*\rho^4$ for some $0<T^*<\infty$ and
	\begin{align}
		\kappa(t, \rho)\leq \varepsilon<\varepsilon_1 \quad \text{for all }0\leq t<T,\label{eq:assumed small curv conc}
	\end{align}
	where $\varepsilon_1>0$ is as in \Cref{prop:t IntegratedIntEstimate}. Moreover, assume
	\begin{align}
		\int_0^T \frac{\lambda^2}{\CalA(f)^2}\diff t \leq \bar{L}<\infty. \label{eq:assumed bounds lambda int}
	\end{align}
	 Then for all $t\in (0,T)$, $x\in\R^3$ and $m\in \N_0$ we have the local estimates
	 \begin{align}
	 	\norm{\nabla^m A}{L^2(B_{\rho/8}(x))}&\leq C(m, T^*,\bar{L}, \sigma) \sqrt{\varepsilon}t^{-\frac{m}{4}},\\
	 	\norm{\nabla^m A}{L^{\infty}}&\leq C(m, T^*,\bar{L}, \sigma)\sqrt{\varepsilon}t^{-\frac{m+1}{4}}, \label{eq:highorder local in time estimate}
	 \end{align}
 	and the global bounds
 	\begin{align}\label{eq:small conc global bound D^2A}
 		\norm{\nabla^{m} A}{L^2(\diff \mu)} \leq C(m,T^*,\bar{L}, \sigma) t^{-\frac{m}{4}} \left(\int \abs{A_0}^2\diff \mu_0\right)^{\frac{1}{2}}.
 	\end{align}
\end{prop}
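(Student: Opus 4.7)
The plan is to mimic the blueprint of \cite[Theorem 3.5]{KSSI} and \cite[Proposition 3.6]{RuppVolumePreserving}, proceeding by induction on $m$ with time-weighted Gronwall arguments, and to handle the new Lagrange-multiplier contributions using the assumption \eqref{eq:assumed bounds lambda int} together with the auxiliary estimate \eqref{eq:estimate L^4/3 by L^2}.

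The base case $m=0$ of the local $L^2$-bound follows directly from \Cref{prop:t IntegratedIntEstimate}: indeed, \eqref{eq:assumed small curv conc} gives $\int_{B_\rho(x)}|A|^2 \diff \mu\leq \varepsilon$ pointwise in time, so bounding the two time integrals on the right-hand side of \Cref{prop:t IntegratedIntEstimate} by $T^\ast \rho^4\cdot\varepsilon$ and $\bar{L}\cdot\varepsilon$ respectively yields
\begin{align}
    \int_{B_{\rho/2}(x)}|A|^2\diff \mu\Big|_t + c_0\int_0^t\!\int_{B_{\rho/2}(x)}\!\!\bigl(|\nabla^2 A|^2+|A|^2|\nabla A|^2+|A|^6\bigr)\diff \mu\diff \tau\leq C(T^\ast,\bar{L},\sigma)\,\varepsilon.
\end{align}

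For $m\geq 1$, I would derive an evolution identity of the schematic form
\begin{align}
    \partial_t\!\int|\nabla^m A|^2\gamma^4\diff \mu + \int|\nabla^{m+2}A|^2\gamma^4\diff \mu \leq (\mathrm{lower\ order})
\end{align}
analogous to \cite[Proposition 3.4]{KSSI}, by applying \Cref{lem:geometricEvolutions} with $\xi = -\langle\nabla\CalW_0(f),\nu\rangle + \lambda\bigl(\frac{3H}{\CalA(f)}-\frac{2}{\CalV(f)}\bigr)$ and expanding $\partial_t|\nabla^m A|^2$ as in \cite{KSSI}. The unconstrained Willmore contributions are absorbed by the standard interpolation inequalities of \cite[Proposition 2.6]{KSSI}. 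The additional terms generated by the Lagrange multiplier are of the form $\frac{\lambda}{\CalA(f)}P_k^{m+2-k}(A)\ast\nabla^m A$ and $\frac{\lambda}{\CalV(f)}P_k^{m-k}(A)\ast\nabla^m A$ (in the notation of Kuwert--Schätzle). These are treated by Young's inequality, absorbing derivatives into the good $|\nabla^{m+2}A|^2$-term and collecting factors $\frac{\lambda^2}{\CalA(f)^2}$. For the volume-weighted terms I use the key conversion \eqref{eq:estimate L^4/3 by L^2}, so that Young's inequality with exponents $(4/3,4)$ produces $\frac{\lambda^{4/3}}{|\CalV(f)|^{4/3}}\leq C\bigl(\frac{\lambda^2}{\CalA(f)^2}+\frac{\sigma^{-2}}{\rho^4}\bigr)$ and powers of $A$ that are controlled by interpolation. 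I would then multiply by a power $t^{m/2}$ (or a suitable polynomial in $t$) and apply Gronwall, using the inductive bound to absorb the term $m\,t^{m/2-1}\int|\nabla^m A|^2\gamma^4\diff \mu$ via interpolation at order $m-1$. The time-integrability of the coefficient multiplying the Gronwall exponential is ensured precisely by \eqref{eq:assumed bounds lambda int} together with $T\leq T^\ast\rho^4$, which yields a constant depending only on $m,T^\ast,\bar{L},\sigma$.

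The main obstacle is bookkeeping: verifying that \emph{every} new term arising from the non-local velocity can be placed into one of the above two shapes and absorbed. This requires integrating by parts carefully so that no more than $m+2$ derivatives ever land on $A$, and using \eqref{eq:estimate L^4/3 by L^2} rather than a direct bound on $\lambda/|\CalV(f)|$ (which is not controlled by the assumptions). From the weighted local $L^2$-estimates, the $L^\infty$-bounds \eqref{eq:highorder local in time estimate} follow by a Michael--Simon/Sobolev embedding applied to $\nabla^m A$ on surfaces of bounded area, losing one order in the time weight. Finally, the global bound \eqref{eq:small conc global bound D^2A} comes from summing the local $L^2$-estimates over a covering of $f_t(\Sigma_g)$ by balls of radius $\rho/8$; the number of such balls is controlled by $\int|A_0|^2\diff \mu_0$ through Simon's monotonicity formula (together with \eqref{eq:rho^2<= A} from the proof of \Cref{prop:t IntegratedIntEstimate}), and the initial data on each ball add up to $\int|A_0|^2\diff \mu_0$.
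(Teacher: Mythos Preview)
Your overall strategy --- induction on $m$ with time-weighted Gronwall, handling the Lagrange-multiplier terms via Young's inequality and the conversion \eqref{eq:estimate L^4/3 by L^2} --- is essentially the paper's approach, and the local $L^2$- and $L^\infty$-estimates go through as you outline (the paper uses piecewise linear time cutoffs $\xi_j$ instead of $t^{m/2}$, and works in even orders first, but this is cosmetic). One minor point: the paper also needs the time-integrability of $\|A\|_{L^\infty([\gamma>0])}^4$, which enters the higher-order evolution inequality (see \Cref{prop:higher order estimates}); this comes for free from the $m=0$ step via interpolation, but you should make it explicit, as it contributes to the Gronwall exponential alongside $\lambda^2/\CalA^2$.

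There is, however, a genuine gap in your argument for the global bound \eqref{eq:small conc global bound D^2A}. Your claim that ``the number of such balls is controlled by $\int|A_0|^2\diff\mu_0$ through Simon's monotonicity formula'' is false: the number of balls of radius $\rho/8$ needed to cover $f_t(\Sigma_g)$ is comparable to $\CalA(f_t)/\rho^2$, and the area is \emph{not} controlled along the flow --- in fact it is expected to diverge in the blow-up application, which is precisely why \eqref{eq:small conc global bound D^2A} is stated separately rather than deduced from the $L^\infty$-bound and an area bound. What actually makes the summation work is your second remark, ``the initial data on each ball add up to $\int|A_0|^2\diff\mu_0$'': the local inductive estimate must be kept in a form that is \emph{linear} in the local quantity $K_x(t)=\int_{B_\rho(x)}|A|^2\diff\mu$ (not just bounded by $\varepsilon$), so that when you sum over a covering with bounded overlap, the $K_x$'s sum to the global $\int|A|^2\diff\mu$, which in turn is bounded by $\int|A_0|^2\diff\mu_0$ via \eqref{eq:A^2GaussBonnet} and the energy decay \eqref{eq:dtW<=0}. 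The paper records this carefully as the inductive claim $e_j(t)+\frac{1}{2}\int_0^t\xi_j E_{j+1}\leq C\,T^{-j}\bigl(K(0)+K(t)+\int_0^t(1+L+a)K\bigr)$, observes that both sides are linear in $e_j$ and $K$, and only then sums. If you bound the right-hand side by $C\varepsilon$ before summing (as your local estimate $C\sqrt{\varepsilon}\,t^{-m/4}$ does), you lose exactly the structure needed for the global bound.
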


In contrast to \cite[Theorem 3.5]{KSSI} and \FFF \cite[Proposition 3.5]{RuppVolumePreserving}\EEE, we do not only prove local bounds, but also the global $L^2$-control \eqref{eq:small conc global bound D^2A}. Note that the global $L^2$-norms could also be estimated by the $L^{\infty}$-bounds and the area. However, this is disadvantageous since \FFF the \EEE area cannot be controlled along the flow, and in fact is always expected to diverge in the blow-up process, cf.\ \Cref{lem:bounded area compact} below. The necessity for the finer estimates leading to \eqref{eq:small conc global bound D^2A} is why we give full details on the proof here, even though the argument is very similar to \cite[Theorem 3.5]{KSSI}.

\begin{proof}[{Proof of \Cref{prop:high ord small conc}}]
	After parabolic rescaling, cf.\ \Cref{lem:scaling}, we may assume $\rho=1$. Let $x\in \R^3$ and define $K(t)\defeq \int_{B_1(x)} \abs{A}^2\diff \mu$ and $L(t)\defeq \frac{\lambda^2}{\CalA(f)^2}$. 
	 Then, for all $t\in [0, T)$ using that $K\leq \varepsilon<\varepsilon_1$ by \eqref{eq:assumed small curv conc}, we deduce from \Cref{prop:t IntegratedIntEstimate}
	\begin{align}
		&\int_0^t \int_{B_{1/2}(x)} \abs{\nabla^2 A}^2\diff \mu \diff \tau\\
		&\quad \leq C \int_{B_1(x)} \abs{A_0}^2\diff \mu_0 + C(1+\sigma^{-2}) \int_0^t \int_{B_1(x)}\abs{A}^2 \diff \mu \diff \tau + C \int_0^t L \int_{B_1(x)}\abs{A}^2\diff \mu \diff \tau \\
		& \quad \leq C(\sigma) \left(K(0)+\int_0^t (1+L)K\diff \tau\right).\label{eq:53a}
	\end{align}
	Moreover, as $K(t)\leq \varepsilon<\varepsilon_1$ by \eqref{eq:assumed small curv conc} we can interpolate \FFF by combining \cite[Lemma 2.8]{KSSI} and \cite[Lemma 4.2]{KSGF} \EEE %\cite[(D.2)]{RuppVolumePreserving} 
	to find
	\begin{align}\label{eq:54}
		\int_0^t \norm{A}{L^{\infty}(B_{1/4}(x))}^4 \diff \tau\leq C(\sigma) \varepsilon_1\left(K(0)+\int_0^t (1+L)K\diff \tau\right) \leq C(T^*,\bar{L}, \sigma),
	\end{align}
	where we used the assumptions \eqref{eq:assumed small curv conc}, \eqref{eq:assumed bounds lambda int} and $T\leq T^{*}$ in the last step.
	Thus, defining $a(t) \defeq \norm{A}{L^{\infty}(B_{1/4}(x))}^4$ and using $T\leq T^*$ and \eqref{eq:assumed bounds lambda int}, we have the estimate
	\begin{align}
		\int_0^t (1+L+a)\diff \tau %&\leq C(T^*, \bar{L},\sigma)\left(1+K(0)+\int_0^t(1+L)K\diff \tau \right) 
		\leq C(T^*, \bar{L}, \sigma). \label{eq:house}
	\end{align}
	Now, we pick $\tilde{\gamma}\in C^{\infty}_c(\R^3)$ with  $\chi_{B_{1/8}(x)}\leq \tilde{\gamma} \leq \chi_{B_{1/4}(x)}$ and $\gamma\defeq \tilde{\gamma}\circ f$. Note that \eqref{eq:gamma} is satisfied with a universal $\Lambda>0$, which we do not keep track of. As in \cite[Theorem 3.5]{KSSI}, we define Lipschitz cutoff functions in time via
	\begin{align}
		\xi_j(t) \defeq \left\lbrace\begin{array}{ll}
			0& \text{for } t\leq (j-1)\frac{T}{m}, \\
			\frac{m}{T}\left(t-(j-1)\frac{T}{m}\right)& \text{for } (j-1)\frac{T}{m}\leq t\leq j\frac{T}{m}, \\
			1 &\text{for } t\geq j\frac{T}{m},
		\end{array}\right.
	\end{align}
	where $m\in \N$ and $0\leq j \leq m$. We also define $\xi_{-1}(t)\defeq 0$ and  $\xi_0(t)\defeq 
	1$ for all $t\in \R$ if $m=0$. We note that $\xi_m(T) =1$ and 
	\begin{align}\label{eq:xi'bound}
		0\leq  \frac{\diff}{\diff t}{\xi}_j \leq \frac{m}{T}\xi_{j-1}\quad \text{for all }j\in \N_0.
	\end{align}
	Furthermore, for $0\leq j\leq m$ we define $E_j(t) \defeq \int |\nabla^{2j} A|^2\gamma^{4j+4}\diff \mu$.
	Then, by \Cref{prop:higher order estimates}, %(with $m=2j$ and $s=4j+4$), 
	using $\gamma\leq 1$ and \eqref{eq:estimate L^4/3 by L^2} we have
	\begin{align}
		\frac{\diff}{\diff t} E_j(t) + \frac{1}{2} E_{j+1}(t) &\leq C(j,m,\sigma)\Big[\left(1+L(t)+ a(t)\right) E_j(t) + \left(1+L(t) + a(t)\right) K(t)\Big].
	\end{align}
	Therefore, if we define $e_j \defeq \xi_j E_j$ this implies using \eqref{eq:xi'bound} and $\xi_j \leq 1$
	\begin{align}
		\frac{\diff}{\diff t} e_j(t) &\leq \frac{m}{T}\xi_{j-1}(t)E_j(t) + C(j,m,\sigma)\left(1+L(t)+ a(t)\right) e_j(t) \\
		&\quad + C(j,m,\sigma)\left(1+L(t)+ a(t)\right) K(t)	-\frac{1}{2} \xi_j(t) E_{j+1}(t).\label{eq:56}
	\end{align}
	We now claim that for all $0\leq j\leq m$ and $t\in (0,T)$ we have
	\begin{align}\label{eq:HigherOrderLocalizedInduction}
		e_j(t) + \frac{1}{2} \int_0^t \xi_j E_{j+1}\diff s \leq \frac{C(j,m,T^*,\bar{L},\sigma)}{T^j}\left(K(0)+K(t)+\int_0^t(1+L+a)K\diff\tau\right).
	\end{align}	
	We proceed by induction on $j$. For $j=0$ we have $\xi_0 \equiv 1$ on $(0,T)$. Therefore, we clearly have $e_0 =\int |A|^2\gamma^4\diff \mu\leq  K$. Moreover, by \eqref{eq:53a} we find  $$\int_0^tE_{1}(s)\diff s = \int_0^t\int |\nabla^2 A|^2 \gamma^{8}\diff \mu \diff s \leq C(\sigma)\left(K(0)+\int_0^t (1+L)K\diff \tau\right).$$
	
	For $j\geq 1$, integrating \eqref{eq:56} on  $[0,t]$ and using $e_j(0)=0$, we find
	\begin{align}
		&e_j(t) + \frac{1}{2} \int_0^t \xi_jE_{j+1}\diff \tau\\
		&\leq C(j,m,\sigma) \int_0^t \left(1+L + a\right)e_j\diff \tau + C(j,m,\sigma)  \int_0^t\left(1+L + a\right) K\diff \tau + \frac{m}{T} \int_0^t \xi_{j-1} E_j \diff \tau \\
%		&\quad \leq  C(j,m,\sigma) \int_0^t \left(1+L+ a\right)e_j\diff s + C(j,m,\sigma) \int_0^t (1+L+a)K\diff \tau \\
%		&\qquad  + \frac{C(j,m,T^*,\bar{L}, \sigma)}{T^{j-1}}~\frac{m}{T} \left(K(0)+K(t)+\int_0^t(1+L+a)K\diff \tau\right)\\
		&\leq C(j,m,T^*,\bar{L},\sigma) \int_0^t \left(1
		+L + a\right)e_j\diff \tau  +\frac{C(j,m,T^*, \bar{L}, \sigma)}{T^{j}}  \left(K(0)+K(t)+\int_0^t(1+L+a)K\diff \tau\right),
	\end{align}
	by the induction hypothesis and since $T\leq T^*$. Using Gronwall's inequality and estimating the exponential term by \eqref{eq:house}, we find
	\begin{align}
		e_j(t)   &\leq  -\frac{1}{2} \int_0^t\xi_jE_{j+1}\diff s + \frac{C(j,m,T^*,\bar{L}, \sigma)}{T^j} \left(K(0)+K(t)+\int_0^t(1+L+a)K\diff \tau\right)\\
		& +  \frac{C(j,m,T^*,\bar{L},\sigma)}{T^j} \int_0^t \left(K(0)+K(\tau)+\int_0^\tau(1+L+a)K\diff s\right)(1+L(\tau)+a(\tau)) \diff \tau.% \\
%		&\leq -\frac{1}{2} \int_0^t\xi_jE_{j+1}\diff s + \frac{C(j,m,T^*,\bar{L}, \sigma)}{T^j} \left(K(0)+K(t)+\int_0^t(1+L+a)K\diff \tau\right)\\
%		&\quad +  \frac{C(j,m,T^*,\bar{L},\sigma)}{T^j} \left(K(0)+ \int_0^t (1+L+a)K\diff \tau\right),
		\label{eq:Gronwall estimate induction}
	\end{align}
	The estimate \eqref{eq:HigherOrderLocalizedInduction} then follows by using \eqref{eq:house} and estimating the double integral via
	\begin{align}
		&\int_0^t \left(\int_0^\tau (1+L+a)K\diff s\right) (1+L(\tau)+a(\tau))\diff \tau \leq C(T^*, \bar{L}, \sigma) \int_0^t (1+L+a)K\diff s,
	\end{align}
	where we also used \eqref{eq:house} once again. Now, for the local estimates, we evaluate \eqref{eq:HigherOrderLocalizedInduction} at $t=T, j=m$ and use \eqref{eq:assumed small curv conc}, $T\leq T^*$ and \eqref{eq:house}. Recalling $K\leq \varepsilon$ by \eqref{eq:assumed small curv conc}, this yields
	 \begin{align}
	 	\left.\int\abs{\nabla^{2m} A}^2\gamma^{4m+4}\diff \mu\right\vert_{t=T} \leq \frac{C(m,T^{*},\bar{L}, \sigma)}{T^{m}} \varepsilon.\label{eq:57}
	 \end{align}
 	For the global $L^2$-estimate, we observe that \eqref{eq:HigherOrderLocalizedInduction} is linear in $e_j$ and $K$. Hence, as in \cite[Proposition 4.2]{RuppVolumePreserving}, we can sum up the local bounds to get
 	\begin{align}
 		&\int \abs{\nabla^{2m}A}^2\diff \mu \\
 		&\leq \frac{C(m,T^{*},\bar{L}, \sigma)}{T^m} \left(\int\abs{A_0}^2\diff\mu_0 + \int\abs{A}^2\diff \mu + \int_0^T \left(1+L+a\right)\int\abs{A}^2\diff \mu \diff \tau\right)\\
 		&\leq \frac{C(m,T^{*},\bar{L}, \sigma)}{T^m} \int\abs{A_0}^2\diff\mu_0,\label{eq:58}
 	\end{align}
 	where we used \eqref{eq:A^2GaussBonnet}, \eqref{eq:dtW<=0} and \eqref{eq:house}.
 	After renaming $T$ into $t$, \eqref{eq:57} and \eqref{eq:58} are precisely the desired $L^2$-estimate for even orders of derivatives.
 	Exactly as in \cite[Theorem 3.5]{KSSI}, the local and global $L^2$-estimates for $\nabla^{2m+1}A$ follow by interpolation. The $L^{\infty}$-estimate can then be deduced as in \cite[Theorem 3.5]{KSSI} as well.
\end{proof}

\section{Controlling the Lagrange multiplier}\label{sec:lambda}

In this section, we will provide some important estimates for the Lagrange multiplier under the assumption that the initial energy is not too large. In contrast to \cite{RuppVolumePreserving}, the crucial power of $\lambda$ is not of lower order when compared to the left hand side of \Cref{prop:curvatureIntegralsEstimate}. %Moreover, since $\CalW$ and $\CalI$ share the invariance with respect to scaling, we cannot proceed as in \cite[Lemma 4.6]{RuppVolumePreserving} to obtain a better representation of the Lagrange multiplier.
Nevertheless, a first immediate feature of the energy regime from \Cref{thm:convergence main} is that we can uniformly bound the denominator of $\lambda$ from below.

\begin{lem}\label{lem: lambda nenner bound}
	Let $\sigma\in (0,1)$ and let $f\colon [0,T)\times \Sigma_g \to\R^3$ be a $\sigma$-isoperimetric Willmore flow with $\CalW(f_0) < \frac{4\pi}{\sigma}$. Then 
	\begin{align}
		\int\Abs{\frac{3}{\CalA(f)} H - \frac{2}{\CalV(f)}}^2\diff \mu \geq \frac{36}{\CalA(f)^2}\left(\sqrt{\frac{4\pi}{\sigma}}- \sqrt{\CalW(f_0)}\right)^2.
	\end{align}
\end{lem}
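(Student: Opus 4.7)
Set $u \defeq \tfrac{3H}{\CalA(f)}-\tfrac{2}{\CalV(f)}$. The idea is that $\int u^2 \diff\mu$ is bounded below by $\CalA(f)^{-1}\,(\int u\,\diff\mu)^2$ via Cauchy--Schwarz, and then $\int u\,\diff\mu$ can be estimated from below using (i) the isoperimetric constraint to control the constant term $\tfrac{2\CalA(f)}{\CalV(f)}$ and (ii) Cauchy--Schwarz again together with $\int H^2\diff\mu=4\CalW(f)$ to bound $|\int H\,\diff\mu|$ from above.

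First, by Cauchy--Schwarz,
\begin{align*}
\int u^2 \diff\mu \;\geq\; \frac{1}{\CalA(f)}\left(\int u \diff\mu\right)^{\!2}.
\end{align*}
Next, by the reverse triangle inequality,
\begin{align*}
\left|\int u\diff\mu\right|
\;=\;\left|\frac{3}{\CalA(f)}\int H\diff\mu-\frac{2\CalA(f)}{\CalV(f)}\right|
\;\geq\;\frac{2\CalA(f)}{\abs{\CalV(f)}}-\frac{3}{\CalA(f)}\left|\int H\diff\mu\right|.
\end{align*}
Cauchy--Schwarz and the definition \eqref{eq:defWillmore} give $\bigl|\int H\diff\mu\bigr|\leq\sqrt{\CalA(f)\int H^2\diff\mu}=2\sqrt{\CalA(f)\CalW(f)}$, and $\CalI(f)=\sigma$ is equivalent to $\abs{\CalV(f)}=\tfrac{\sqrt{\sigma}}{6\sqrt{\pi}}\CalA(f)^{3/2}$, hence
\begin{align*}
\frac{2\CalA(f)}{\abs{\CalV(f)}}=\frac{12\sqrt{\pi/\sigma}}{\sqrt{\CalA(f)}}=\frac{6}{\sqrt{\CalA(f)}}\sqrt{\frac{4\pi}{\sigma}},
\qquad
\frac{3}{\CalA(f)}\left|\int H\diff\mu\right|\leq\frac{6\sqrt{\CalW(f)}}{\sqrt{\CalA(f)}}.
\end{align*}
Combining and using that $\CalW_0$ (and hence $\CalW$ by \eqref{eq:WvstildeW}) is non-increasing along the flow by \eqref{eq:dtW<=0}, so $\CalW(f(t,\cdot))\leq\CalW(f_0)<\tfrac{4\pi}{\sigma}$, yields
\begin{align*}
\left|\int u\diff\mu\right|\;\geq\;\frac{6}{\sqrt{\CalA(f)}}\left(\sqrt{\frac{4\pi}{\sigma}}-\sqrt{\CalW(f_0)}\right)\;>\;0.
\end{align*}
Squaring and inserting into the Cauchy--Schwarz bound above gives the claimed inequality.

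There is no real obstacle here; the only subtle point is ensuring that the right-hand side of the triangle inequality is non-negative, which is precisely guaranteed by the hypothesis $\CalW(f_0)<4\pi/\sigma$ together with the monotonicity of the Willmore energy along the flow (\Cref{rem:strictLyapunov}). This also explains why the threshold $4\pi/\sigma$ naturally appears throughout \Cref{thm:convergence main}.
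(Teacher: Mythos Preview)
Your proof is correct and follows essentially the same approach as the paper: the reverse triangle inequality, the relation $\CalI(f)=\sigma$, and the energy decay \eqref{eq:dtW<=0}. The only cosmetic difference is that the paper applies the triangle inequality directly in $L^2(\diff\mu)$ to the function $\tfrac{3H}{\CalA(f)}-\tfrac{2}{\CalV(f)}$, whereas you first use Cauchy--Schwarz to pass to the mean and then the scalar triangle inequality; since $\tfrac{2}{\CalV(f)}$ is constant and your Cauchy--Schwarz bound on $\lvert\int H\,\diff\mu\rvert$ recovers exactly $\|H\|_{L^2}$, the two routes yield the identical constant.
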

\begin{proof}
	This follows from the reverse triangle inequality in $L^2(\diff\mu)$, \eqref{eq:dtI=0} and \eqref{eq:dtW<=0}.
\end{proof}

While the scaling techniques from \FFF \cite[Lemma 4.3]{RuppVolumePreserving}\EEE~are not available here, we still get the following key estimate, which gives a control over $\lambda$ by quantities which will be suitably integrable.
\begin{lem}\label{lem:lambda new}
	Let $\sigma\in (0,1)$ and let $f\colon[0,T)\times \Sigma_g\to\R^3$ be a $\sigma$-isoperimetric Willmore flow with $\CalW(f_0)<\frac{4\pi}{\sigma}$. Then, we have
	\begin{align}
		\abs{\lambda} \leq \frac{\sqrt{\CalA(f)}}{6\left(\sqrt{\frac{4\pi}{\sigma}}-\sqrt{\CalW(f)}\right)}\Abs{\int \langle\partial_t f, \nu\rangle\diff \mu +\int\abs{A^0}^2H\diff \mu}.
	\end{align}
\end{lem}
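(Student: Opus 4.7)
Rather than trying to estimate $\lambda$ directly from the quotient in \eqref{eq:deflambda}, the plan is to integrate the evolution equation \eqref{eq:IsoWF} against the constant $1$ over $\Sigma_g$ and solve algebraically for $\lambda$. The term $\int\Delta H\diff\mu$ vanishes on the closed surface, producing a clean identity whose denominator can be bounded from below using only the isoperimetric constraint and the Willmore energy.

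\textbf{Step 1 (algebraic identity).} Testing \eqref{eq:IsoWF} against $\nu$ gives $\langle\partial_t f,\nu\rangle = -\Delta H - |A^0|^2H + \lambda\bigl(\tfrac{3}{\CalA(f)}H - \tfrac{2}{\CalV(f)}\bigr)$. Integrating over $\Sigma_g$ and using $\int\Delta H\diff\mu=0$ yields
\begin{align*}
\int\langle\partial_t f,\nu\rangle\diff\mu + \int |A^0|^2 H\diff\mu = \lambda\,D(f),\qquad D(f)\defeq \frac{3}{\CalA(f)}\int H\diff\mu - \frac{2\CalA(f)}{\CalV(f)}.
\end{align*}

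\textbf{Step 2 (lower bound on $|D(f)|$).} Since $\CalI(f)=\sigma$, the relation $36\pi\CalV(f)^2=\sigma\CalA(f)^3$ gives
\begin{align*}
\left|\frac{2\CalA(f)}{\CalV(f)}\right| = \frac{12\sqrt{\pi}}{\sqrt{\sigma\,\CalA(f)}} = \frac{6\sqrt{4\pi/\sigma}}{\sqrt{\CalA(f)}}.
\end{align*}
By Cauchy--Schwarz and the definition \eqref{eq:defWillmore} of $\CalW$,
\begin{align*}
\left|\frac{3}{\CalA(f)}\int H\diff\mu\right| \leq \frac{3}{\sqrt{\CalA(f)}}\left(\int H^2\diff\mu\right)^{1/2} = \frac{6\sqrt{\CalW(f)}}{\sqrt{\CalA(f)}}.
\end{align*}
The energy decay \eqref{eq:dtW<=0} combined with $\CalW(f_0)<4\pi/\sigma$ ensures $\sqrt{4\pi/\sigma}-\sqrt{\CalW(f)}>0$, so the reverse triangle inequality gives
\begin{align*}
|D(f)| \geq \frac{6\bigl(\sqrt{4\pi/\sigma}-\sqrt{\CalW(f)}\bigr)}{\sqrt{\CalA(f)}} > 0.
\end{align*}

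\textbf{Step 3 (conclusion).} Combining the identity from Step 1 with the lower bound from Step 2 gives the claimed estimate. No obstacle of substance is expected here: the argument rests on the identity $\int\Delta H\diff\mu=0$ together with the isoperimetric identity, both of which are direct. The only point requiring mild care is the sign of $\CalV(f)$, which is handled by working with absolute values throughout and invoking $|\CalV(f)|=\sqrt{\sigma}\,\CalA(f)^{3/2}/(6\sqrt{\pi})$. The positivity of $|D(f)|$ under the energy threshold $\CalW(f_0)<4\pi/\sigma$ is exactly the reason this threshold appears, paralleling the lower bound for the $L^2$-denominator already established in \Cref{lem: lambda nenner bound}.
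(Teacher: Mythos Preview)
Your proposal is correct and follows essentially the same route as the paper: integrate the flow equation against $\nu$ to obtain the identity $\int\langle\partial_t f,\nu\rangle\diff\mu + \int|A^0|^2H\diff\mu = \lambda\,D(f)$, then bound $|D(f)|$ from below via the reverse triangle inequality using $\CalI(f)\equiv\sigma$ and Cauchy--Schwarz on $\int H\diff\mu$. The paper's proof is the same argument, just slightly more compressed.
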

\begin{proof}
	We test the evolution equation \eqref{eq:IsoWF} with the normal $\nu$ and integrate to obtain
	\begin{align}%\label{eq:lambda new}
		\int \langle\partial_t f, \nu\rangle\diff \mu = -\int\abs{A^0}^2H\diff \mu +  \left(\frac{3}{\CalA(f)}\int H\diff \mu - \frac{2}{\CalV(f)}\CalA(f)\right)\lambda,
	\end{align}
	where we used the divergence theorem. We now estimate the prefactor of $\lambda$ by
	\begin{align}
		\Abs{\frac{3}{\CalA(f)}\int H\diff \mu - \frac{2}{\CalV(f)}\CalA(f)} &\geq \frac{2}{\abs{\CalV(f)}}\CalA(f)-\frac{3}{\CalA(f)}\Abs{\int H \diff \mu} \geq  \frac{6}{\sqrt{\CalA(f)}}\left(\sqrt{\frac{4\pi}{\sigma}}-\sqrt{\CalW(f)}\right),
	\end{align}
	using the fact that $\CalI(f)\equiv \sigma$ by \eqref{eq:dtI=0}. By the assumption and \eqref{eq:dtW<=0} this is strictly positive and the claim follows.
\end{proof}

We remark that the existence of  $f_0\colon\Sigma_g\to\R^3$ with $\CalI(f_0)=\sigma\in (0,1)$ satisfying the assumption $\CalW(f_0)<\frac{4\pi}{\sigma}$ is not yet known and --- in general --- not true. For the case $g=0$, this will follow from \Cref{thm:beta0<4pi/sigma} below.  However, for tori we have $\CalW(f_0)\geq 2\pi^2$ by \cite{MarquesNeves}, and hence $2\pi^2 \leq \CalW(f_0)<\frac{4\pi}{\sigma}$ can only hold for $\sigma<\frac{2}{\pi}<1$. On the other hand, for $\sigma\in (0,\frac{1}{2}]$ and arbitrary genus, there exists $f_0$ with $\CalW(f_0)<\frac{4\pi}{\sigma}$ since $\beta_g(\sigma)<8\pi$ by \cite[Theorem 1.2]{KusnerMcGrath}.
We now use \Cref{lem:lambda new} to deduce the time integrability \eqref{eq:assumed bounds lambda int} for $\lambda$ in the case of small curvature concentration, which enables us to bound all derivatives of the second fundamental form by \Cref{prop:high ord small conc}. %Note that this is possible since the order of the Lagrange multiplier has reduced due to the new estimate.

\begin{lem}\label{lem:lambdaBound}
	Let $\sigma\in (0,1)$ and let $f\colon [0,T)\times \Sigma_g\to\R^3$ be a $\sigma$-isoperimetric Willmore flow. Let ${\CalW}(f_0)\leq K<\frac{4\pi}{\sigma}$ and let $\rho>0$ be such that
	\begin{align}
		\kappa(t,\rho)\leq \varepsilon<\varepsilon_1\quad \text{ for all } t\in [0,T),
	\end{align}
	where $\varepsilon_1>0$ is as in \Cref{prop:t IntegratedIntEstimate}. Then for all $\tau \in [0,T)$ we have
	\begin{align}
		&\int_{0}^\tau \frac{\lambda^2}{\CalA(f)^2}\diff t\leq \frac{C}{\left(\sqrt{\frac{4\pi}{\sigma}}-\sqrt{K}\right)^{4}} \left(\CalW(f_0)-\CalW(f(\tau)) +C(\sigma,g) \left(\frac{\tau^{\frac{1}{2}}}{\rho^2}+\frac{\tau}{\rho^4}\right)\right). \label{eq:lambda^2+lambda^4/3 estimate}
	\end{align} 
\end{lem}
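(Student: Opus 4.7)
By \Cref{lem:lambda new} and the monotonicity $\CalW(f(t))\leq K$ along the flow \eqref{eq:dtW<=0}, with $\alpha\defeq\sqrt{4\pi/\sigma}-\sqrt K>0$ we obtain the pointwise bound
\begin{align*}
|\lambda|\leq\frac{\sqrt{\CalA(f)}}{6\alpha}\Bigl|\int\langle\partial_t f,\nu\rangle\diff\mu+\int|A^0|^2H\diff\mu\Bigr|.
\end{align*}
Squaring, dividing by $\CalA^2$, and applying $(a+b)^2\leq 2a^2+2b^2$ together with the Cauchy--Schwarz inequality $\bigl(\int\langle\partial_t f,\nu\rangle\diff\mu\bigr)^2\leq\CalA\int|\partial_t f|^2\diff\mu$ yields
\begin{align*}
\frac{\lambda^2}{\CalA^2}\leq\frac{1}{18\alpha^2}\int|\partial_t f|^2\diff\mu+\frac{1}{18\alpha^2}\cdot\frac{(\int|A^0|^2H\diff\mu)^2}{\CalA}.
\end{align*}
After integration in $t\in[0,\tau]$, the first summand contributes $\tfrac{1}{9\alpha^2}(\CalW(f_0)-\CalW(f(\tau)))$ via \eqref{eq:dtW<=0} and \eqref{eq:WvstildeW}, producing the principal term of \eqref{eq:lambda^2+lambda^4/3 estimate}.

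The key task is then to bound $J(\tau)\defeq\int_0^\tau\frac{(\int|A^0|^2H\diff\mu)^2}{\CalA}\diff t$ by $C(\sigma,g)(\tau^{1/2}/\rho^2+\tau/\rho^4)$ plus a multiple of the energy drop. Since $\int\Delta H\diff\mu=0$ on the closed surface and $\nabla\CalW_0(f)=(\Delta H+|A^0|^2H)\nu$ by \eqref{eq:1st vari willmore}, we have $\int|A^0|^2H\diff\mu=\int\langle\nabla\CalW_0(f),\nu\rangle\diff\mu$; Cauchy--Schwarz then gives $(\int|A^0|^2H)^2\leq\CalA\int|\nabla\CalW_0|^2\diff\mu$, hence $J(\tau)\leq\int_0^\tau\int|\nabla\CalW_0|^2\diff\mu\diff t$. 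Using Codazzi \eqref{eq:nabla H A A^0} we estimate $|\nabla\CalW_0|^2\leq C(|\nabla^2A|^2+|A|^6)$, apply \Cref{prop:t IntegratedIntEstimate} on a Vitali-type cover of $\R^3$ by balls of radius $\rho$ with bounded overlap, and sum. Together with the global bound $\int|A|^2\diff\mu\leq 4K+8\pi g$ from \eqref{eq:A^2GaussBonnet}, the assumption $\kappa(t,\rho)<\varepsilon_1$, and the reduction \eqref{eq:estimate L^4/3 by L^2} between the $|\lambda|^{4/3}/|\CalV|^{4/3}$ and $\lambda^2/\CalA^2$ contributions, this produces
\begin{align*}
\int_0^\tau\int|\nabla\CalW_0|^2\diff\mu\diff t\leq C(\sigma,g)\Bigl(1+\frac{\tau}{\rho^4}\Bigr)+C(\sigma,g)\int_0^\tau\frac{\lambda^2}{\CalA^2}\diff t.
\end{align*}
The $\tau^{1/2}/\rho^2$ correction then emerges when the factor $1/\CalA$ in $J(\tau)$ is processed via a Cauchy--Schwarz step in time, together with Simon's lower bound $\rho^2\leq C\CalA$ from \eqref{eq:rho^2<= A}.

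The principal obstacle is closing this cascading inequality: the residual $\int_0^\tau\lambda^2/\CalA^2\diff t$ appearing on the right-hand side must be absorbed into the $\tfrac{1}{18\alpha^2}$ factor on the left, which is feasible thanks to the universal smallness of $\varepsilon_1$ chosen in \Cref{prop:t IntegratedIntEstimate}. In contrast with the volume-preserving setting of \cite{RuppVolumePreserving}, the Lagrange multiplier here appears at leading order relative to $\int|\nabla\CalW_0|^2$, which is precisely why the prefactor in the final estimate is $(\sqrt{4\pi/\sigma}-\sqrt K)^{-4}$ rather than $(\sqrt{4\pi/\sigma}-\sqrt K)^{-2}$: one factor $\alpha^{-2}$ comes from the pointwise estimate, and a second $\alpha^{-2}$ is produced when the cascading inequality is closed.
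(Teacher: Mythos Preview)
Your proposal contains a genuine gap at the absorption step, and the origin of the $\tau^{1/2}/\rho^2$ term is not correctly identified.

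After bounding $J(\tau)\leq \int_0^\tau\!\int|\nabla\CalW_0|^2\diff\mu\diff t$ via Cauchy--Schwarz (which \emph{consumes} the factor $1/\CalA$) and then applying the summed \Cref{prop:t IntegratedIntEstimate}, you arrive at an inequality of the form
\[
X \;\leq\; \frac{C}{\alpha^2}\bigl(\CalW(f_0)-\CalW(f(\tau))\bigr) \;+\; \frac{C(\sigma,g)}{\alpha^2}\Bigl(1+\tfrac{\tau}{\rho^4}\Bigr) \;+\; \frac{C(\sigma,g)}{\alpha^2}\,X,
\qquad X\defeq\int_0^\tau\frac{\lambda^2}{\CalA^2}\diff t.
\]
To absorb the last term you would need $C(\sigma,g)/\alpha^2<1$. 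But $\alpha=\sqrt{4\pi/\sigma}-\sqrt{K}$ may be arbitrarily small (as $K\nearrow 4\pi/\sigma$), whereas $C(\sigma,g)$ and the universal constant $\varepsilon_1$ from \Cref{prop:t IntegratedIntEstimate} are fixed independently of $K$; there is no mechanism by which ``the universal smallness of $\varepsilon_1$'' makes this coefficient small. Moreover, your route produces an additive constant $C(\sigma,g)/\alpha^2$ on the right that does \emph{not} vanish as $\tau\to 0$, contradicting the form of \eqref{eq:lambda^2+lambda^4/3 estimate} and, more importantly, destroying the application to \Cref{thm:Lifespan}, where one needs the right-hand side to be small for short times.

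The paper avoids this by \emph{not} passing through $\int|\nabla\CalW_0|^2$. Instead one keeps the $\CalA^{-1}$ and estimates $\bigl(\int|A^0|^2H\bigr)^2\leq C(\sigma,g)\int|A|^4$ via Cauchy--Schwarz and \eqref{eq:A^2GaussBonnet}, then uses $\CalA^{-1}\leq C\rho^{-2}$ from \eqref{eq:rho^2<= A} and Cauchy--Schwarz \emph{in space--time}:
\[
\int_0^\tau\!\int|A|^4\diff\mu\diff t \;\leq\; \Bigl(\int_0^\tau\!\int|A|^6\diff\mu\diff t\Bigr)^{1/2}\Bigl(\int_0^\tau\!\int|A|^2\diff\mu\diff t\Bigr)^{1/2}.
\]
The second factor contributes $C(\sigma,g)\,\tau^{1/2}$, producing the $\tau^{1/2}/\rho^2$ term, while the summed \Cref{prop:t IntegratedIntEstimate} bounds the first factor by $C(\sigma,g)(1+\tau/\rho^4+X)^{1/2}$. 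The crucial gain is that only $X^{1/2}$ appears on the right, so Young's inequality yields $\frac{C}{\alpha^2}\frac{\tau^{1/2}}{\rho^2}X^{1/2}\leq \delta X + C(\delta)\alpha^{-4}\tau/\rho^4$ with $\delta$ chosen small \emph{independently of $\alpha$}. This is precisely what generates the $\alpha^{-4}$ prefactor and permits the absorption; your explanation attributes the second $\alpha^{-2}$ to ``closing the cascading inequality'', but in your scheme that closure is not possible.
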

Note that by the invariance of the Willmore energy and the isoperimetric ratio, this estimate is preserved under parabolic rescaling, cf.\ \Cref{lem:scaling}. %, see also \cite[Remark 2.2]{RuppVolumePreserving}.
\begin{proof}[{Proof of \Cref{lem:lambdaBound}}]
%	By the assumption and \Cref{prop:t IntegratedIntEstimate} we have
%	\begin{align}
%		& \left.\int_{B_{\rho/2}(x)}\abs{A}^2\diff \mu\right\vert_{t=\tau} +  c_0 \int_0^\tau \int_{B_{\rho/2}(x)}\left(\abs{\nabla^2 A}^2+ \abs{A}^2\abs{\nabla A}^2+\abs{A}^6\right) \diff \mu \diff t \\
%		&\quad \leq \int_{B_{\rho}(x)} \abs{A_0}^2\diff \mu_0 + \frac{C(1+\sigma^{-2})}{\rho^4}  \int_0^\tau\int_{B_{\rho}(x)} \abs{A}^2\diff \mu \diff t+ C \int_0^\tau\frac{\lambda^2}{\CalA(f)^2} \int_{B_{\rho}(x)} \abs{A}^2\diff \mu\diff t.\label{eq:EstimateGlobalLocal1}
%	\end{align}
	By the assumption we get the local control from \Cref{prop:t IntegratedIntEstimate}. As in \cite[Proposition 4.2]{RuppVolumePreserving}, we can sum up these local bounds to get the global estimate
	\begin{align}
		\int_0^\tau \int\abs{A}^6\diff \mu\diff t 
		%&\leq 
		%\sum_{\ell} \int_0^\tau\int_{B_{\rho/2}(x_\ell)}\abs{A}^{6}\diff \mu\diff t  \\
		%		&  \leq \sum_{\ell} \int_{B_\rho(x_\ell)} \abs{A_0}^{2}\diff \mu_0 + \frac{C}{\rho^4} \int_0^\tau \sum_{\ell} \int_{B_{\rho}(x_\ell)}\abs{A}^{2}\diff \mu\diff t \\
		%		&\quad + C\int_0^{\tau} \abs{\lambda}^{\frac{4}{3}} \sum_{\ell} \int_{B_{\rho}(x_\ell)} \abs{A}^{2}\diff \mu\diff t \\
		&\leq C \int\abs{A_0}^2\diff \mu_0 + \frac{C(1+\sigma^{-2})}{\rho^{4}}\int_0^\tau \int|A|^2\diff \mu\diff t +C\int_0^\tau  \frac{\lambda^2}{\CalA(f)^2} \int \abs{A}^2\diff \mu\diff t.
	\end{align}
	
	Now, by \eqref{eq:A^2GaussBonnet}, the energy decay \eqref{eq:dtW<=0} and the assumption, we have 
	\begin{align}\label{eq:A^2 int bounded}
		\int\abs{A}^2\diff \mu\leq C(\sigma, g).
	\end{align}
	Thus, we obtain the estimate
	\begin{align}\label{eq:GlobalA^6Estimate}
		\int_0^{\tau}\int \abs{A}^{6}\diff \mu\diff t \leq C\left(\sigma,g\right) \left(1+\frac{\tau}{\rho^4}+ \int_0^\tau\frac{\lambda^2}{\CalA(f)^2} \diff t\right).
	\end{align}
	By \eqref{eq:AA0H} we have  $\abs{A^0}^2\abs{H}\leq C\abs{A}^3$. Therefore, using \Cref{lem:lambda new} we find
	\begin{align}
		&\int_0^{\tau} \frac{\lambda^2}{\CalA(f)^2}\diff t \leq \frac{C}{b^2} \left( \int_0^\tau  \int \abs{\partial_t f}^2\diff \mu \diff t+ \int_0^\tau  \frac{C(\sigma,g)}{\CalA(f)}\int\abs{A}^4\diff \mu\diff t\right),
		\label{eq:Lamba L^2 estimate}
	\end{align}
	by Cauchy--Schwarz and \eqref{eq:A^2 int bounded}, where $b=b(K,\sigma) \defeq \sqrt{\frac{4\pi}{\sigma}}- \sqrt{K}>0$. 
	For the first term in \eqref{eq:Lamba L^2 estimate}, by \eqref{eq:dtW<=0} and \eqref{eq:WvstildeW} we have
	$\int_0^\tau \int \abs{\partial_t f}^2\diff \mu \diff t   = \CalW(f_0)-\CalW(f(\tau)).$
	For the second term in \eqref{eq:Lamba L^2 estimate}, we use \eqref{eq:rho^2<= A}, Cauchy--Schwarz in time and space, \eqref{eq:A^2 int bounded} and then \eqref{eq:GlobalA^6Estimate} to find
	\begin{align}
		\frac{C(\sigma,g)}{b^2}\int_0^\tau \frac{1}{\CalA(f)} \int\abs{A}^4\diff \mu\diff t
%		&\qquad \leq \frac{C(\sigma,g)\rho^{-2}}{\left(\sqrt{\frac{4\pi}{\sigma}}-\sqrt{K}\right)^2} \int_0^{\tau} \int \abs{A}^4\diff\mu\diff t \\
		&\leq \frac{C(\sigma,g)\rho^{-2}}{b^2} \left(\int_0^{\tau} \int \abs{A}^6\diff\mu\diff t\right)^{\frac{1}{2}}\left(\int_0^{\tau}\int\abs{A}^2\diff\mu\diff t\right)^{\frac{1}{2}} \\
		&\leq \frac{C(\sigma,g)}{b^2} \frac{\tau^{\frac{1}{2}}}{\rho^2} \left( 1+\frac{\tau^{\frac{1}{2}}}{\rho^2}+ \left(\int_0^\tau\frac{\lambda^2}{\CalA(f)^2}\diff t\right)^{\frac{1}{2}} \right)\\
		&\leq  \frac{C(\delta,\sigma,g)}{b^4}  \left(\frac{\tau^{\frac{1}{2}}}{\rho^2} +\frac{\tau}{\rho^4}\right) + \delta\int_0^{\tau} \frac{\lambda^2}{\CalA(f)^2}\diff t ,\label{eq:A^4GlobalEstimate}
	\end{align}
	for every $\delta>0$ by Young's inequality, and estimating $b=\sqrt{\frac{4\pi}{\sigma}}-\sqrt{K}\leq C(\sigma)$ in the last step. The statement then follows from \eqref{eq:Lamba L^2 estimate} by taking $\delta>0$ sufficiently small.
\end{proof}

%\begin{lem}\label{lem:integral to 0 SI}
%	Under the assumptions of \Cref{lem:lambdaBound}, if $\beta_g(\sigma)\leq\CalW(f_0)<\frac{4\pi}{\sigma}$, where $\beta_g$ is as in \eqref{eq:defbeta}, then we have
%	\begin{align}
%		&\lim_{\kappa\to 0}\int_{\sqrt{\beta_g(\sigma)}}^{\sqrt{\beta_g(\sigma)+\kappa}} w \left(\sqrt{\frac{4\pi}{\sigma}}-\sqrt{w}\right)^{-2} =0.
%	\end{align}
%\end{lem}
%
%This means, that for every fixed $\sigma\in (0,1]$, the first term on the right hand side of \eqref{eq:lambda^2+lambda^4/3 estimate} can be made arbitrarily small, if the initial energy is sufficiently close to the infimum. This will be an important ingredient in proving our lifespan bound.
%
%\begin{proof}[{Proof of \Cref{lem:integral to 0 SI}}]
%	The integral can be solved explicitly by the method of partial fractions. A standard computation then yields
%			\begin{align}
%				&\int_{\sqrt{\beta_g(\sigma)}}^{\sqrt{\beta_g(\sigma)+\kappa}} w \left(\sqrt{\frac{4\pi}{\sigma}}-\sqrt{w}\right)^{-2} \\
%				&\qquad  = \sqrt{\frac{4\pi}{\sigma}} \left( \frac{1}{\sqrt{\frac{4\pi}{\sigma}}-\sqrt{\beta_g(\sigma)+\kappa}} -
%				\frac{1}{\sqrt{\frac{4\pi}{\sigma}}-\sqrt{\beta_g(\sigma)}}\right) + \log\left(\frac{\sqrt{\frac{4\pi}{\sigma}}-\sqrt{\beta_g(\sigma)+\kappa}}{\sqrt{\frac{4\pi}{\sigma}}-\sqrt{\beta_g(\sigma)}}\right).
%			\end{align}
%		This tends to zero as $\kappa\to 0$.
%\end{proof}

\section{The blow-up and its properties}\label{sec:blowup}
In this section, we will rescale an isoperimetric Willmore flow as we approach the maximal existence time to obtain a limit immersion. Analyzing the properties of this limit will be the keystone in proving our main result, \Cref{thm:convergence main}.

\subsection{A lower bound on the existence time}
As in \cite{KSGF} and \cite{RuppVolumePreserving}, the first step is to prove a lower bound on the existence time of an isoperimetric flow which respects the parabolic rescaling in \Cref{subsec:blowupexistence} below.

To that end, we state a general lifespan result for possible future reference, where the lower bound only depends on the radius of concentration $\rho$, the isoperimetric ratio $\sigma$ and the behavior of the $L^2$-norm of $\frac{\lambda}{\CalA(f)}$ near $t=0$.

\begin{prop}\label{prop:Lifespan general}
Let $\varepsilon_1>0$ be as in \FFF \Cref{prop:t IntegratedIntEstimate}. \EEE	
	There exist universal constants $\bar{\delta}>0$ nd $\bar{\varepsilon}\in (0, \min\{8\pi,\varepsilon_1\})$ with the following property: Let $f_0\colon \Sigma_g\to\R^3$ be an immersion with\linebreak $\CalI(f_0)= \sigma\in (0,1)$, $H_{f_0}\not\equiv const$ and $\CalW(f_0)<\frac{4\pi}{\sigma}$. Let $f$ be the $\sigma$-isoperimetric Willmore flow with initial datum $f_0$. Assume that
	\begin{enumerate}[(a)]
		\item $\kappa(0, \rho) \leq \varepsilon<\bar{\varepsilon}$ for some $\rho>0$;
		\item there exists $\bar{\omega}>0$ with the following property: For any $t_0 \in [0, \min \{T, \rho^4\bar{\omega}\}]$ with $\kappa(t, \rho)<\varepsilon_1$ for all $0\leq t<t_0$, we have $\int_0^{t_0} \frac{\lambda^2}{\CalA^2}\diff t\leq \bar{\delta}$.
	\end{enumerate}
	 Then the maximal existence time of the flow satisfies $T > \hat{c}\rho^4$ for some $\hat{c}=\hat{c}(\sigma, \bar{\omega}) \in (0,1)$ and 
	 \begin{align}\label{eq:lifespan estimate2}
	 	\kappa(t,\rho)\leq \hat{c}^{-1}\varepsilon \quad\text{for all } t\in [0, \hat{c}\rho^4].
	 \end{align}
\end{prop}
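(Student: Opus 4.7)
The plan is a continuity-in-time argument of the standard Kuwert--Schätzle type \cite{KSGF} (cf.\ also \cite[Proposition 4.2]{RuppVolumePreserving}), in which the new ingredient is hypothesis (b): it provides precisely the time integrability of $\lambda^2/\CalA^2$ needed to close the Grönwall step arising from \Cref{prop:t IntegratedIntEstimate}. By \Cref{lem:scaling}, I may assume $\rho=1$, since all quantities appearing in the hypotheses and the conclusion---$\kappa$, $\hat{c}$, $\bar{\omega}$, and $\int_0^t \lambda^2/\CalA^2\diff\tau$---transform correctly under the parabolic rescaling $f\mapsto r^{-1}f(r^4\,\cdot\,,\cdot)$.

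The core of the argument is a bootstrap on the curvature concentration. I would introduce the continuity time
\begin{align*}
T_0\defeq \sup\left\{t\in [0,\min\{T,\bar{\omega}\}]\,\Big|\,\kappa(\tau,1)<\varepsilon_1\text{ for all }0\leq \tau<t\right\},
\end{align*}
which is strictly positive by continuity of $\kappa(\cdot,1)$ along the smooth flow and the hypothesis $\kappa(0,1)\leq\varepsilon<\bar{\varepsilon}<\varepsilon_1$. On $[0,T_0)$, \Cref{prop:t IntegratedIntEstimate} applies at every $x\in\R^3$; taking the supremum in $x$ and invoking the covering inequality \eqref{eq:GammaEstimate1/2} would yield
\begin{align*}
\kappa(t,1)\leq \Gamma\varepsilon + \Gamma C(1+\sigma^{-2})\int_0^t \kappa(\tau,1)\diff\tau + \Gamma C\int_0^t \frac{\lambda^2}{\CalA(f)^2}\,\kappa(\tau,1)\diff\tau.
\end{align*}
By (b), $\int_0^t \lambda^2/\CalA^2\diff\tau\leq \bar{\delta}$ on $[0,T_0)$, so Grönwall's inequality gives
\begin{align*}
\kappa(t,1)\leq \Gamma\varepsilon\exp\!\bigl(\Gamma C(1+\sigma^{-2})\,t + \Gamma C\bar{\delta}\bigr)\quad\text{for all }t\in[0,T_0).
\end{align*}

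To close the bootstrap, I would fix universal constants $\bar{\delta}>0$ with $e^{\Gamma C\bar{\delta}}\leq 2$ and $\bar{\varepsilon}\in (0,\min\{8\pi,\varepsilon_1/(8\Gamma)\})$, then define $\hat{c}=\hat{c}(\sigma,\bar{\omega})\defeq \min\{\bar{\omega},\log 2/(\Gamma C(1+\sigma^{-2})),1/(4\Gamma)\}$. With these choices, the Grönwall bound reads $\kappa(t,1)\leq 4\Gamma\varepsilon\leq \hat{c}^{-1}\varepsilon$ on $[0,T_0)\cap[0,\hat{c}]$, and the right-hand side is also $\leq \varepsilon_1/2$ by the choice of $\bar{\varepsilon}$; by continuity of $\kappa(\cdot,1)$, this precludes $T_0<\min\{T,\hat{c}\}$. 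To upgrade this to $T>\hat{c}$, I would invoke \Cref{prop:high ord small conc} (applicable on $[0,T_0)$ with $T^{\ast}=\hat{c}$ and $\bar{L}=\bar{\delta}$) to obtain uniform smooth bounds on $A$ and all its covariant derivatives on any compact subinterval of $[0,T_0)$; a standard extension argument via \Cref{prop:STE}, applied to the smooth limit immersion at the putative blow-up time, would then contradict the non-extendability of $f$ at $T$ whenever $T\leq \hat{c}$, yielding $T>\hat{c}$ and the bound \eqref{eq:lifespan estimate2}.

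The principal obstacle is to maintain universality of $\bar{\varepsilon}$ and $\bar{\delta}$ in the presence of the factor $(1+\sigma^{-2})$ inherited from \Cref{prop:t IntegratedIntEstimate}: the choice of $\hat{c}(\sigma,\bar{\omega})$ above is precisely designed to absorb this $\sigma$-dependence, as the statement explicitly allows. A minor technical point concerns the extension step at $T$: the smooth bounds ensure that the limit $\lim_{t\nearrow T}f(t,\cdot)$ is a smooth immersion with non-constant mean curvature, since otherwise \Cref{rem:CMC} would force the denominator in \eqref{eq:deflambda} to vanish, contradicting the boundedness of $\lambda$ afforded by (b). This justifies invoking \Cref{prop:STE} to continue the flow past $T$, completing the contradiction.
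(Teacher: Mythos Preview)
Your continuity argument via Gr\"onwall is correct and essentially equivalent to the paper's proof: the paper defines the bootstrap time with the threshold $3\Gamma\varepsilon$ and uses the estimate from \Cref{prop:t IntegratedIntEstimate} directly (without Gr\"onwall), choosing $\bar{\delta}$ and $\omega=\omega(\sigma,\bar{\omega})$ so that the right-hand side stays below $2\varepsilon$ on balls of radius $1/2$. Your version with the threshold $\varepsilon_1$ and an explicit exponential bound achieves the same conclusion with the same dependence $\hat{c}=\hat{c}(\sigma,\bar{\omega})$.

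There is, however, a genuine gap in your extension step. Hypothesis (b) gives only the \emph{time-integral} bound $\int_0^{t_0}\lambda^2/\CalA^2\diff t\leq\bar{\delta}$; it does not give pointwise boundedness of $\lambda$, and so cannot by itself rule out that the denominator in \eqref{eq:deflambda} vanishes as $t\nearrow T$. But pointwise control of $\lambda/\CalA$ and $\lambda/\CalV$ is exactly what is needed to run the Kuwert--Sch\"atzle extension argument: without it you cannot bound $\partial_t f$ in $L^\infty$ and hence cannot even obtain the smooth limit $f(T)$ whose mean curvature you then want to analyze. The fix is to use the one hypothesis you never invoke, namely $\CalW(f_0)<4\pi/\sigma$: via \Cref{lem: lambda nenner bound} this yields a uniform lower bound on the denominator of $\lambda$ along the flow, which together with the $L^2$- and $L^\infty$-bounds from \Cref{prop:high ord small conc} gives $|\lambda/\CalA|+|\lambda/\CalV|\leq C(\xi,\CalW(f_0),\sigma,\bar{\omega})$ on $[\xi,T)$ (cf.\ \eqref{eq:lambda/A Linfty estimate}--\eqref{eq:lambda/V Linfty estimate} in the paper). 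With this in hand, the smooth limit $f(T)$ exists, and the same lower bound on the denominator shows directly that $H_{f(T)}\not\equiv\mathrm{const}$, so \Cref{prop:STE} applies and the contradiction to maximality follows.
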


%The proof of \Cref{prop:Lifespan general} relies on the following quantitative version of Aleksandrov's theorem, which is going to appear shortly.
%
%\begin{thm}[{\cite[Theorem ?]{CMCDeficit}}]
%	Let $g\in \N_0$, $\sigma\in (0,1)$ and $\delta>0$. Then we have
%	\begin{align}
%		\int \abs{H-\overline{H}}^2\diff \mu \geq C(\sigma, \delta),
%	\end{align}
%	for any immersion $f\colon\Sigma_g\to\R^3$ with $\CalI(\sigma)=\sigma$ and $\CalW(f)\leq 8\pi-\delta$. Here $\overline{H}\defeq \fint H\diff \mu$ denotes the average of the mean curvature.
%\end{thm}

Note that we always have $\lim_{t\searrow 0}\int_0^t \frac{\lambda^2}{\CalA^2}\diff \tau=0$. The crucial insight here is that only the decay behavior of the $L^2$-norm of $\frac{\lambda}{\CalA}$ under the assumption of small concentration allows control on the existence time in a way which transforms correctly under parabolic rescaling. 

%\todo{Bringts das?}
%In view of our results on the small energy regime, condition it is convenient to replace condition (b) by the following slightly weaker, but less technical assumption
%\begin{itemize}
%	\item[(b')] If $\kappa(t, \rho)<\varepsilon_1$ for all $0\leq t \leq\bar{\omega}$, then we have $\int_0^t \frac{\lambda^2}{\CalA^2}\diff \tau \leq \frac{\bar{\delta}}{2} + M \rho^{-4}t$ for all $0\leq t\leq \bar{\omega}$. In this case, $\hat{c}$ in the statement of \Cref{prop:Lifespan general} depends on $\sigma, \bar{\omega}$ and $M$.
%\end{itemize}

\begin{proof}[Proof of \Cref{prop:Lifespan general}]
	Without loss of generality, we may assume $\rho=1$, otherwise we rescale as in \Cref{lem:scaling}, see also \FFF \cite[Proposition 3.5]{RuppVolumePreserving}. \EEE
	Let $T$ denote the maximal existence time of the flow and let $\bar{\delta}>0$ to be chosen. We define $\bar{\varepsilon}\defeq \min\{\frac{\varepsilon_1}{3\Gamma},8\pi\}$ where $\varepsilon_1>0$ is as in \Cref{prop:curvatureIntegralsEstimate} and $\Gamma>1$ is as in \eqref{eq:GammaEstimate1/2} and set $\kappa(t)\defeq \kappa(t,1)$ for $t\in [0,T)$. By compactness of $f([0,t]\times \Sigma)$ for $t<T$, the supremum in the definition of $\kappa= \kappa(\cdot,1)$ in \eqref{eq:DefConcentrationFunction} is always attained and the function $\kappa\colon[0,T)\to\R$ is continuous with $\kappa(0)\leq \varepsilon<\bar{\varepsilon}$ by (a).
		
	For a parameter $\omega\in (0,\bar{\omega}]$, to be chosen later, we now define
	\begin{align}\label{eq:deft_0}
		t_0 \defeq \sup\left\{ 0\leq t \leq \min\{T, \omega\} \mid \kappa(\tau)\leq 3\Gamma\varepsilon\text{ for all }0\leq \tau <t\right\} \in [0, \min\{T, {\omega}\}].
	\end{align}
	By continuity of $t\mapsto\kappa(t)$ and (a), we have $t_0>0$. For $t\in [0,t_0)$, we have $\kappa(t)\leq 3\Gamma\varepsilon<\varepsilon_1$ by \eqref{eq:deft_0} and the definition of $\bar{\varepsilon}$. Hence, by \Cref{prop:t IntegratedIntEstimate} and assumption (b) we find
	\begin{align}\label{eq:star}
		\int_{B_{1/2}(x)} \abs{A}^2\diff \mu \leq \int_{B_1(x)}\abs{A_0}^2\diff \mu_0 + 3c_1(1+\sigma
		^{-2}) \Gamma\varepsilon t + 3c_1 \Gamma\varepsilon \bar{\delta},
	\end{align}
	for all $0\leq t<t_0$ where $c_1=C$ from \Cref{prop:t IntegratedIntEstimate}. Now, if we choose $\bar{\delta}\defeq (6 c_1\Gamma)^{-1}>0$ and $\omega=\omega(\sigma, \bar{\omega}) =\min \{(6c_1(1+\sigma^{-2})\Gamma)^{-1}, \bar{\omega}\}>0$ we find from \eqref{eq:star}
	\begin{align}
		\int_{B_{1/2}(x)} \abs{A}^2\diff \mu &\leq \int_{B_1(x)}\abs{A_0}^2\diff \mu_0+ \frac{\varepsilon}{2}\omega^{-1}t+ \frac{\varepsilon}{2} \leq 2\varepsilon \quad \text{for all }0\leq t<t_0.
		\label{eq:1/2CurvatureConcentrationEstimate}
	\end{align}
	However, if ${t_0<\min \{T, \omega\}}$, together with \eqref{eq:GammaEstimate1/2}, this implies $\kappa(t)\leq 2\Gamma\varepsilon<\varepsilon_1$ for all $0\leq t<t_0$ by our choice of $\bar{\varepsilon}$. On the other hand, by \eqref{eq:deft_0} and continuity, we must have $\kappa(t_0)=3\Gamma\varepsilon$, a contradiction.  
	
	Consequently, $t_0= \min \{T, \omega\}$ has to hold. %If $t_0=\omega$, we find $T\geq \omega$. The estimate \eqref{eq:lifespan estimate2} then follows from \eqref{eq:GammaEstimate1/2} and \eqref{eq:1/2CurvatureConcentrationEstimate} if we choose $\hat{c} = \hat{c}(\sigma, \bar{\omega})\in (0, \omega)\subset (0,1)$ small enough.	
	Assume $t_0 = T \leq \omega$. Then, as before, from \eqref{eq:1/2CurvatureConcentrationEstimate} and \eqref{eq:GammaEstimate1/2} we find
	\begin{align}\label{eq:estimate A 1.7}
		\kappa(t)\leq 2\Gamma\varepsilon<\varepsilon_1 \quad \text{for all }0\leq t <T=t_0,
	\end{align}
	by the definition of $\bar{\varepsilon}$. As $T\leq \omega\leq \bar{\omega}$ by assumption and $\int_0^{\bar{\omega}}\frac{\lambda^2}{\CalA(f)^{2}}\diff t \leq \bar{\delta}$ by (b), we can apply \Cref{prop:high ord small conc} to conclude that for any $0<\xi<T$ we have
	\begin{align}\label{eq:star 2}
		\norm{\nabla^m A}{\infty} \leq C(m,\xi,\sigma,\bar{\omega}) \text{ for all }m\in \N_0, t\in [\xi, T),
	\end{align}
	and $\norm{\nabla^mA}{L^2} \leq C(m,\xi,\CalW(f_0),\sigma, \bar{\omega})$. Consequently, for all $t\in [\xi,T)$ we can estimate
	\begin{align}\label{eq:lambda/A Linfty estimate}
		\Abs{\frac{\lambda}{\CalA(f)}}^2 \leq \frac{C\left(\Norm{\Delta H}{L^2}^2+\norm{A}{L^{\infty}}^4\Norm{A}{L^2}^2\right)}{\CalA(f)^2\Norm{\frac{3}{\CalA(f)}H-\frac{2}{\CalV(f)}}{L^2}^2}\leq C(\xi, \CalW(f_0),\sigma, \bar{\omega}),
	\end{align}
	using \eqref{eq:deflambda}, Cauchy-Schwarz, \eqref{eq:AA0H} and \Cref{lem: lambda nenner bound}. Similarly, we find
	\begin{align}\label{eq:lambda/V Linfty estimate}
		\Abs{\frac{\lambda}{\CalV(f)}}^2 \leq \frac{C(\sigma)}{\CalA(f)}\int(\abs{\Delta H}^2+\abs{A}^6)\diff\mu \leq C(\sigma) \left(\norm{\Delta H}{\infty}^2 + \norm{A}{\infty}^6\right)\leq C(\xi, \sigma,\CalW(f_0),\bar{\omega}),
	\end{align}
	where we used $\CalI(f)\equiv \sigma$ and \eqref{eq:star 2}.
	%	Thus, using \eqref{eq:dtg}, we find
	%		\begin{align}
	%			\abs{\partial_t g} \leq 2 \norm{A}{\infty}\norm{\partial_t f}{\infty}\leq C(\sigma, g, \xi) \quad \text{for all }\xi\leq t<T,
	%		\end{align}
	%		such that $\int_{\xi}^{T}\norm{\abs{\partial_t g_{ij}(t)}}{\infty}\diff t <\infty$ and hence by \cite[Lemma 14.2]{Hamilton82}, the metrics $g(t)$ are equivalent, uniformly in $\xi\leq t<T$, and converge to a metric $g(T)$ as $t\to T$.
	Exactly with the same arguments as in \cite[pp. 330 -- 332]{KSGF} (see also \FFF\cite[Chapter 4, Proof of Theorem 1.1 after (5.8)]{Rupp_2022})\EEE, we can deduce that $f(t)$ smoothly converges to a smooth immersion $f(T)$ as $t\nearrow T$. By assumption and the energy decay, we infer from \Cref{lem: lambda nenner bound} that the denominator in \eqref{eq:deflambda} is bounded away from zero for all $t\in [0,T)$, so $f(T)$ is not a constant mean curvature immersion. By \Cref{prop:STE}, we can then restart the flow with initial datum $f(T)$ which contradicts the maximality of $T$. 
	
	Hence, $T> \omega$ has to hold. The estimate \eqref{eq:lifespan estimate2} then follows from \eqref{eq:1/2CurvatureConcentrationEstimate} and \eqref{eq:GammaEstimate1/2} after choosing $\hat{c} = \hat{c}(\sigma,\bar{\omega}) = \min\{\omega,(2\Gamma)^{-1},1\}>0$.
\end{proof}

Together with the integral estimate for the Lagrange multiplier in \Cref{lem:lambdaBound}, this now implies the following
\begin{prop}[Lifespan bound for small energy gap]\label{thm:Lifespan}
	Let $\sigma\in (0,1)$, let $f\colon[0,T)\times \Sigma_g\to\R^3$ be a maximal $\sigma$-isoperimetric Willmore flow such that
	\begin{enumerate}[(i)]
		\item $\CalW(f_0)\leq K<\frac{4\pi}{\sigma}$;
		\item $\kappa(0, \rho)\leq \varepsilon<\bar{\varepsilon}$, where $\bar{\varepsilon}>0$ is as in \Cref{prop:Lifespan general};
		\item $\CalW(f_0)-\lim_{t\nearrow T}\CalW(f(t))\leq \bar{d}$, where $\bar{d}=\bar{d}(K, \sigma,g)>0$.
	\end{enumerate}
	Then the maximal existence time is bounded from below by
	\begin{align}
		T> \hat{c} \rho^4,
	\end{align}
	where $\hat{c}=\hat{c}(K,\sigma, g)$ and for all $0\leq t \leq \hat{c}\rho^4$ we have $\kappa(t, \rho)\leq \hat{c}^{-1}\varepsilon$.
\end{prop}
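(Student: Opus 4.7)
The plan is to reduce \Cref{thm:Lifespan} to the general lifespan result, \Cref{prop:Lifespan general}, by verifying its two hypotheses (a) and (b) under the stronger assumptions (i)--(iii). Assumption (a) of \Cref{prop:Lifespan general} with the same $\bar{\varepsilon}$ is immediate from (ii), so the entire work lies in producing a suitable $\bar{\omega}=\bar{\omega}(K,\sigma,g)>0$ and verifying the integral decay (b) for the Lagrange multiplier on the parabolically-scaled interval $[0,\rho^4\bar{\omega}]$.

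To verify (b), fix any $t_0\in [0,\min\{T,\rho^4\bar{\omega}\}]$ with $\kappa(t,\rho)<\varepsilon_1$ for all $0\leq t<t_0$. On such an interval, the quantitative control on the Lagrange multiplier from \Cref{lem:lambdaBound} applies (with $K$ in (i) playing the role of the energy bound) and yields
\begin{align}
\int_{0}^{t_0}\frac{\lambda^2}{\CalA(f)^2}\diff t\;\leq\;\frac{C}{b^{4}}\Bigl(\CalW(f_0)-\CalW(f(t_0))+C(\sigma,g)\Bigl(\frac{t_0^{1/2}}{\rho^2}+\frac{t_0}{\rho^4}\Bigr)\Bigr),
\end{align}
where $b=b(K,\sigma)\defeq\sqrt{4\pi/\sigma}-\sqrt{K}>0$ by (i). Since $\CalW$ is a Lyapunov function (\Cref{rem:strictLyapunov}), assumption (iii) gives $\CalW(f_0)-\CalW(f(t_0))\leq\bar{d}$, and $t_0\leq\rho^4\bar{\omega}$ turns the second term into $C(\sigma,g)(\bar{\omega}^{1/2}+\bar{\omega})$. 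Thus
\begin{align}
\int_{0}^{t_0}\frac{\lambda^2}{\CalA(f)^2}\diff t\;\leq\;\frac{C}{b^{4}}\Bigl(\bar{d}+C(\sigma,g)\bigl(\bar{\omega}^{1/2}+\bar{\omega}\bigr)\Bigr).
\end{align}

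Now I choose the parameters in the correct order: first pick $\bar{\omega}=\bar{\omega}(K,\sigma,g)\in(0,1]$ so small that $C(\sigma,g)(\bar{\omega}^{1/2}+\bar{\omega})/b^{4}\leq\bar{\delta}/2$, where $\bar{\delta}$ is the universal constant from \Cref{prop:Lifespan general}; then pick $\bar{d}=\bar{d}(K,\sigma,g)>0$ so small that $C\bar{d}/b^{4}\leq\bar{\delta}/2$. With these choices, hypothesis (b) of \Cref{prop:Lifespan general} is satisfied for this $\bar{\omega}$. Applying \Cref{prop:Lifespan general} yields $T>\hat{c}\rho^4$ with $\hat{c}=\hat{c}(\sigma,\bar{\omega})=\hat{c}(K,\sigma,g)$ and the concentration estimate $\kappa(t,\rho)\leq\hat{c}^{-1}\varepsilon$ on $[0,\hat{c}\rho^4]$.

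There is no real obstacle beyond bookkeeping: the ``hard'' work was already done in \Cref{prop:Lifespan general} (the continuity-of-concentration argument) and \Cref{lem:lambdaBound} (the $L^2$-integrability of $\lambda/\CalA$ produced from the Willmore energy gap). The one thing to get right is the order of quantifiers — $\bar{\omega}$ must be chosen before $\bar{d}$, both depending on $K,\sigma,g$ through $b^{-4}$ — so that the resulting $\bar{d}$ is genuinely independent of $\rho$ and $\varepsilon$, as required in (iii).
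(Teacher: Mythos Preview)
Your proposal is correct and follows essentially the same approach as the paper: verify hypothesis (b) of \Cref{prop:Lifespan general} by applying \Cref{lem:lambdaBound} on the interval where $\kappa(t,\rho)<\varepsilon_1$, then choose $\bar{\omega}$ and $\bar{d}$ small in terms of $K,\sigma,g$ so that the resulting bound is at most $\bar{\delta}$. The only cosmetic difference is that the paper absorbs everything into a single constant $C(K,\sigma,g)$ rather than tracking $b^{-4}$ explicitly, and does not insist on an order between the choices of $\bar{\omega}$ and $\bar{d}$ (indeed none is needed, since the two contributions are independent).
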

Note that the limit in (iii) exists due to \Cref{rem:strictLyapunov} (ii).
\begin{proof}[{Proof of \Cref{thm:Lifespan}}]
	We check that the assumptions in \Cref{prop:Lifespan general} are satisfied.
	Let $\bar{\varepsilon}, \bar{\delta}>0$ be as in \Cref{prop:Lifespan general}. Assumption (a) of \Cref{prop:Lifespan general} holds true by assumption (ii). We now verify assumption (b) in \Cref{prop:Lifespan general}. To that end, let $\bar{\omega}>0$ to be chosen and assume that for some $t_0\in [0, \min \{T, \rho^4\bar{\omega}\}]$ we have $\kappa(t, \rho)<\varepsilon_1$ for all $0\leq t<t_0$. By (i), we may apply \Cref{lem:lambdaBound} and use (iii) to find the estimate
	\begin{align}
		\int_0^{t_0} \frac{\lambda^2}{\CalA^2}\diff t&\leq C(K,\sigma,g)\left( \bar{d} + \bar{\omega}^{\frac{1}{2}}+\bar{\omega}\right)\leq %\frac{\bar{\delta}}{2}+ C(K,\sigma,g)\bar{\omega}\leq
		 \bar{\delta},
	\end{align}
	if we choose $\bar{d}=\bar{d}(K, \sigma,g)>0$ and $\bar{\omega}=\bar{\omega}(K,\sigma,g)>0$ small enough. The assumptions of \Cref{prop:Lifespan general} are thus fulfilled and the result follows with $\hat{c}=\hat{c}(\sigma, \bar{\omega}) = \hat{c}(K, \sigma,g)$.
\end{proof}

\subsection{Existence of a blow-up}\label{subsec:blowupexistence}
In this section, we will rescale as we approach the maximal existence time $T\in (0,\infty]$ of a $\sigma$-isoperimetric Willmore flow $f\colon [0,T)\times \Sigma_g\to\R^3$ with $\sigma\in (0,1)$. To that end, let $(t_j)_{j\in \N}\subset [0,T), t_j \nearrow T, (r_j)_{j\in \N}\subset(0, \infty), (x_j)_{j\in \N}\subset \R^3$ be arbitrary. By translation invariance and \Cref{lem:scaling} for all $j\in \N$ the flow
\begin{align}\label{eq:blow up flows}
	{f}_j \colon [0, r^{-4}_j(T-t_j))\times \Sigma_g\to\R^3, \\
	{f}_j(t, p) \defeq  r_j^{-1}\left(f(t_j+r_j^4 t, p)-x_j\right)
\end{align}
is also a $\sigma$-isoperimetric Willmore flow with initial datum $f_j(0)=r_j^{-1}(f(t_j, \cdot)-x_j)$ and maximal existence time $r_j^{-4}(T-t_j)$. Throughout this section, we will denote all geometric quantities of the flow $f_j$ with a subscript $j$, such as $A_j, \lambda_j, \kappa_j, \mu_j$ for example. The next lemma guarantees the existence of suitable $t_j, r_j$ and $x_j$.

\begin{lem}\label{lem:existence tjrjxj}
	Let $\sigma\in (0,1)$ and let $f\colon[0,T)\times \Sigma_g\to\R^3$ be a maximal $\sigma$-isoperimetric Willmore flow with $\CalW(f_0)\leq K<\frac{4\pi}{\sigma}$. Let $\hat{c}=\hat{c}(K, \sigma,g)\in (0,1)$ be as in \Cref{thm:Lifespan}.
	Then, there exist sequences $(t_j)_{j\in \N}\subset [0,T), t_j \nearrow T$, $(r_j)_{j\in \N}\subset(0, \infty)$ and $(x_j)_{j\in \N}\subset \R^3$ such that for all $j\in \N$ we have
	\begin{enumerate}[(i)]
		\item $t_j+r_j^4\hat{c}<T$;
		\item $\kappa_j(t, 1) \leq  \bar{\varepsilon}$ for all $t\in [0, \hat{c}]$, where $\bar{\varepsilon}>0$ is as in \Cref{prop:Lifespan general};	
		\item $\inf_{j\in \N}\int_{B_{1}(0)} \abs{A_{f_j(\hat{c}, \cdot)}}^2\diff\mu_{f_j(\hat{c}, \cdot)}>0$.
	\end{enumerate}
\end{lem}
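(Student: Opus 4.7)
The construction is a standard parabolic blow-up selection based on the curvature concentration function $\kappa$ together with the lifespan bound \Cref{thm:Lifespan}. Since $\CalW$ is a strict Lyapunov function bounded below by $4\pi$, the limit $W_\infty \defeq \lim_{t\nearrow T} \CalW(f(t))$ exists in $[4\pi, \CalW(f_0)]$, so any sequence $t_j \nearrow T$ eventually satisfies the energy-gap hypothesis $\CalW(f(t_j)) - W_\infty \leq \bar d(K,\sigma,g)$ from \Cref{thm:Lifespan}, and the lifespan theorem may be applied to the flow restarted at each $t_j$.

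Fix a threshold $\varepsilon_0 \in (0, \min\{\hat c\,\bar\varepsilon,\,8\pi\})$, which in particular satisfies $\hat c^{-1}\varepsilon_0 \leq \bar\varepsilon$. For each such $t_j$, I would define the \emph{critical concentration scale}
\[
r_j \defeq \sup\{r > 0 : \kappa(t_j, r) \leq \varepsilon_0\}.
\]
By Gauss--Bonnet \eqref{eq:A^2GaussBonnet} we have $\int\abs{A}^2\diff \mu \geq 8\pi > \varepsilon_0$, hence $r_j < \infty$; smoothness of $f(t_j,\cdot)$ together with $\mu_{t_j}(f_{t_j}^{-1}(B_r(x))) \to 0$ as $r\searrow 0$ uniformly in $x$ gives $r_j > 0$. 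Left-continuity of $r \mapsto \kappa(t_j,r)$ (monotone convergence) then yields $\kappa(t_j, r_j) \leq \varepsilon_0 < \bar\varepsilon$, while maximality forces $\kappa(t_j, r) > \varepsilon_0$ for every $r > r_j$. Applying \Cref{thm:Lifespan} to the restarted flow with $\rho = r_j$ and $\varepsilon = \varepsilon_0$ then yields (i), i.e.\ $t_j + \hat c r_j^4 < T$, together with $\kappa(s, r_j) \leq \hat c^{-1}\varepsilon_0 \leq \bar\varepsilon$ for all $s \in [t_j, t_j + \hat c r_j^4]$; after parabolic rescaling (\Cref{lem:scaling}) this is precisely (ii).

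For (iii), I would choose $x_j \in \R^3$ to (approximately) attain $\kappa(t_j + r_j^4 \hat c,\, r_j)$; since
\[
\int_{B_1(0)}\abs{A_{f_j(\hat c,\cdot)}}^2\diff\mu_{f_j(\hat c,\cdot)} \;=\; \int_{B_{r_j}(x_j)}\abs{A}^2\diff\mu\Big|_{t = t_j + r_j^4 \hat c},
\]
condition (iii) reduces to the uniform lower bound $\inf_j \kappa(t_j + r_j^4 \hat c, r_j) > 0$, which is the main obstacle. My strategy is to argue by contradiction: if $\kappa_j(\hat c, 1) \to 0$ along a subsequence, one propagates the vanishing backwards in rescaled time to conclude $\kappa_j(0, 2) \to 0$, contradicting the lower bound $\kappa_j(0,2) = \kappa(t_j, 2 r_j) > \varepsilon_0$ enforced by the maximality of $r_j$. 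The ingredients needed are: the Lagrange-multiplier integral bound from \Cref{lem:lambdaBound} (which, via the small energy gap, supplies hypothesis \eqref{eq:assumed bounds lambda int} of \Cref{prop:high ord small conc} for the rescaled flow); the resulting uniform $L^\infty$-control of $\abs{A_j}$, $\abs{\nabla A_j}$ and hence $\abs{\partial_t f_j}$ on the slab $[\hat c/2, \hat c]$; the $L^2$-dissipation identity $\int_0^{\hat c}\int\abs{\partial_t f_j}^2\diff\mu_j\diff s = \CalW_0(f_j(0)) - \CalW_0(f_j(\hat c))$ bounding the total normal velocity; and the doubling estimate \eqref{eq:GammaEstimate1/2} for passing between concentration at scales $1$ and $2$. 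Together these control the displacement of the concentration point between rescaled times $0$ and $\hat c$, producing the desired contradiction after extracting a subsequence.
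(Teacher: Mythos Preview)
Your selection of $t_j$ and $r_j$ and the verification of (i) and (ii) via \Cref{thm:Lifespan} are correct and coincide with the paper's approach; the only cosmetic difference is that the paper picks $r_t$ so that $\kappa(t,r_t)$ satisfies a \emph{two}-sided bound $\alpha\le\kappa(t,r_t)\le\hat c\bar\varepsilon$ (citing \cite[Lemma~6.6]{RuppVolumePreserving}), whereas you take the maximal radius with $\kappa\le\varepsilon_0$ and obtain the lower bound at scale $2r_j$ via doubling. Both then feed the restarted flow into the lifespan theorem once the energy gap is below $\bar d$.

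For (iii) there is a genuine gap. The paper simply defers to \cite[p.\,349]{KSRemovability} and \cite[Proposition~6.7]{RuppVolumePreserving}; your self-contained contradiction sketch proposes to ``propagate the vanishing backwards'' from $\kappa_j(\hat c,1)\to 0$ to $\kappa_j(0,2)\to 0$, but the ingredients you list do not close this. The quantity driving $\partial_t\int|A_j|^2\gamma^4\diff\mu_j$ in the identity behind \Cref{lem:curvatureIntegralsEstimate1} is $\int|\nabla\CalW_0(f_j)|^2\gamma^4\diff\mu_j$, and by the $L^2$-orthogonality built into \eqref{eq:deflambda} one has
\[
\int|\nabla\CalW_0(f_j)|^2\diff\mu_j \;=\; \int|\partial_t f_j|^2\diff\mu_j \;+\; \lambda_j^2\sigma^{-2}\!\int|\nabla\CalI(f_j)|^2\diff\mu_j.
\]
The first summand indeed vanishes by your dissipation identity, but the second is only controlled by $C(K,\sigma)\,\lambda_j^2/\CalA(f_j)^2$, and \Cref{lem:lambdaBound} bounds its time integral by $C(K,\sigma,g)\bigl(\text{energy drop}+\hat c^{1/2}+\hat c\bigr)$; the $\hat c$-contribution does \emph{not} tend to zero as $j\to\infty$. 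Moreover, your $L^\infty$-bounds live only on $[\hat c/2,\hat c]$ and give a merely \emph{bounded} (not small) displacement there, while saying nothing about $[0,\hat c/2]$. Backward-in-time control for a fourth-order parabolic flow cannot be extracted from these inputs. The argument in the cited references proceeds in the forward direction, establishing directly a lower bound on the localized curvature at the later time; in the constrained setting this uses the precise structure of the $\lambda$-terms rather than a backward step.
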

\begin{proof}
	Given any $t\in [0,T)$, with essentially the same arguments as in \FFF\cite[Lemma 6.6]{Rupp_2022}\EEE, one finds a radius $r_t\in (0,\infty)$ such that
	\begin{align}\label{eq:def r_t}
		\alpha\leq\kappa(t, r_t)\leq \hat{c}\bar{\varepsilon}\quad,
	\end{align}
	where $\alpha =\alpha(K, \sigma,g)>0$. One then argues as in \cite[p. 349]{KSRemovability} (see also \cite[Proposition 6.7]{RuppVolumePreserving}), to prove the existence of $t_j \nearrow T$ and $(x_j)_{j\in \N}\subset \R^3$ such that choosing $r_j\defeq r_{t_j}$, we find that (iii) is satisfied.
	
	Now, the flow $f_j$ satisfies $\kappa_j(0, 1)=\kappa(t_j, r_{j}) = \kappa(t_j, r_{t_j})\leq \hat{c}\bar{\varepsilon}<\bar{\varepsilon}$ by \eqref{eq:def r_t} and since $\hat{c}\in (0,1)$. Moreover, by the invariances of the Willmore energy we have $\CalW(f_j(0))\leq K<\frac{4\pi}{\sigma}$ for all $j\in \N$ and
	\begin{align}
		\CalW(f_j(0))-\lim_{t\nearrow r_j^{-4}(T-t_j)}\CalW(f_j(t)) = \CalW(f(t_j))-\lim_{t\nearrow T}\CalW(f(t)) \to 0, \quad\text{as }j\to\infty.
	\end{align}
	Consequently, for $j$ sufficiently large, we can apply \Cref{thm:Lifespan}, to find that the maximal existence time of the flow $f_j$ is bounded from below by $r_j^{-4}(T-t_j)>\hat{c}$ which proves (i) and $\kappa_j(t, 1)\leq \bar{\varepsilon}$ for all $t\in [0, \hat{c}]$ by \eqref{eq:lifespan estimate2} which proves (ii).
\end{proof}

\begin{prop}[Existence and properties of the limit immersion]\label{lem:blowup existence}
	Let $\sigma\in (0,1)$ and suppose \linebreak $f\colon[0,T)\times\Sigma_g\to\R^3$ is a maximal $\sigma$-isoperimetric Willmore flow with $\CalW(f_0)\leq K<\frac{4\pi}{\sigma}$. Let $\hat{c}\in (0,1)$, $t_j \nearrow T$, $(r_j)_{j\in\N}\subset (0,\infty)$ and $(x_j)_{j\in \N}\subset \R^3$ be as in \Cref{lem:existence tjrjxj}. Then, there exists a complete, orientable surface $\hat{\Sigma}\neq \emptyset$ without boundary and a proper immersion $\hat{f}\colon \hat{\Sigma}\to\R^3$ such that, after passing to a subsequence, $r_j\to r\in [0,\infty]$ and 
	% and diffeomorphisms $\phi_j \colon \hat{\Sigma}(j)\defeq \{p\in \hat{\Sigma}\mid \abs{\hat{f}(p)}<j\}\to U_j \subset \Sigma_g$ such that	
	\begin{enumerate}[(i)]
%		\item for all $m\in \N$, $\xi\in (0, \hat{c})$ and all $P\subset \hat{\Sigma}$ compact, we have $f_j\circ \phi_j\to f_{lim}$ in $C^1([\xi, \hat{c}]; C^m(P;\R^3))$ as $j\to\infty$, where $f_{lim}(t, \cdot)=\hat{f}$;
		\item  as $j\to\infty$, $\hat{f}_j \defeq f_j(\hat{c}, \cdot)\to \hat{f}$ smoothly on compact subsets of $\R^3$, after reparametrization;
		\item we have $\int_{\overline{B_1(0)}}\abs{\hat{A}}^2\diff \hat{\mu}>0$ and $\CalW(\hat{f}) \leq \CalW(f_0)$;
		\item $\hat{f}$ is a Helfrich immersion, i.e.\ a solution to \eqref{eq:Helfrich eq};
		\item if $\CalA(\hat{f}_j)\to\infty$, then $\hat{f}$ is a Willmore immersion.
	\end{enumerate}
\end{prop}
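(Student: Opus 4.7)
The plan is to derive uniform regularity for the rescaled flows $f_j$ on $[0,\hat c]$, extract a smooth subsequential limit on compacta via a Langer-type compactness theorem, and then pass the flow equation \eqref{eq:IsoWF} to the limit.

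\textbf{Uniform regularity of the $f_j$.} By \Cref{lem:existence tjrjxj}(ii), $\kappa_j(t,1)\leq\bar\varepsilon<\varepsilon_1$ for $t\in[0,\hat c]$. Since $\CalW$ and $\CalI$ are scale invariant, $\CalW(f_j(0))=\CalW(f(t_j))\leq K<\frac{4\pi}{\sigma}$, so \Cref{lem:lambdaBound} applied with $\rho=1$ yields a uniform estimate $\int_0^{\hat c}\lambda_j^2/\CalA(f_j)^2\diff t\leq\bar L=\bar L(K,\sigma,g)$; indeed, $\CalW(f_j(0))-\CalW(f_j(\hat c))\leq \CalW(f(t_j))-\lim_{t\to T}\CalW(f(t))$ is bounded uniformly in $j$ by \Cref{rem:strictLyapunov}. \Cref{prop:high ord small conc} then provides uniform $L^\infty$ bounds on $\nabla^m A_j$ on each slab $[\tau,\hat c]$ with $\tau>0$, together with global $L^2$ bounds on $\nabla^m A_j$ in terms of $\int|A_0|^2\diff\mu_0$.

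\textbf{Extraction of the limit and items (i), (ii).} Together with the non-vanishing lower bound in \Cref{lem:existence tjrjxj}(iii), a Langer/Breuning-type compactness theorem, as employed in \cite{KSRemovability} and \cite{RuppVolumePreserving}, gives (i): up to reparametrization and a subsequence, $\hat f_j\to\hat f$ smoothly on compact subsets of $\R^3$, where $\hat f$ is a proper immersion of a complete, orientable, boundaryless surface $\hat\Sigma\neq\emptyset$. Passing the lower bound of \Cref{lem:existence tjrjxj}(iii) to the limit yields $\int_{\overline{B_1(0)}}|\hat A|^2\diff\hat\mu>0$, while $\CalW(\hat f)\leq\CalW(f_0)$ follows from lower semicontinuity under smooth convergence on compacta together with the monotonicity $\CalW(\hat f_j)\leq\CalW(f_j(0))\leq\CalW(f_0)$ from \eqref{eq:dtW<=0}.

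\textbf{Helfrich property (iii) and Willmore case (iv).} The identity \eqref{eq:dtW<=0} applied to $f_j$ gives
\begin{align}
\int_0^{\hat c}\int|\partial_t f_j|^2\diff\mu_j\diff t = \CalW_0(f(t_j))-\CalW_0(f(t_j+r_j^4\hat c))\to 0,
\end{align}
since $\CalW_0$ has a finite limit along $f$ as $t\to T$. Combined with uniform $C^k$-in-$t$ bounds on $\partial_t f_j$ on $[\hat c/2,\hat c]$, obtained by differentiating \eqref{eq:IsoWF} and using the regularity from the first step, a standard interpolation argument forces $\partial_t f_j(\hat c,\cdot)\to 0$ locally uniformly. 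Writing $\mu_{1,j}\defeq 3\lambda_j/\CalA(f_j)$ and $\mu_{2,j}\defeq 2\lambda_j/\CalV(f_j)$, the flow equation at $t=\hat c$ reads $\Delta H_j+|A_j^0|^2 H_j-\mu_{1,j}H_j+\mu_{2,j}\to 0$ locally in $C^\infty$. If $\hat H$ takes two distinct values on $\hat\Sigma$, then evaluating and subtracting at two suitable points produces $\mu_{1,j}(\hat c)\to\lambda_1$ and $\mu_{2,j}(\hat c)\to\lambda_2$ along a subsequence, and $\hat f$ satisfies \eqref{eq:Helfrich eq}; if $\hat H$ is constant, the limit equation forces $\hat H\equiv 0$ or $|\hat A^0|^2\equiv\text{const}$, and in either case $\hat f$ satisfies the Helfrich equation for appropriate $\lambda_1,\lambda_2$. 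For (iv), if $\CalA(\hat f_j)\to\infty$ then by the isoperimetric constraint $\CalV(\hat f_j)\to\infty$; the pointwise-in-time control of $\lambda_j$ at $t=\hat c$ obtained from the formula \eqref{eq:deflambda}, \Cref{lem: lambda nenner bound}, and the $L^\infty$ bounds on $A_j$ and its derivatives forces $\mu_{1,j}(\hat c),\mu_{2,j}(\hat c)\to 0$, so the limit equation becomes $\Delta\hat H+|\hat A^0|^2\hat H=0$, i.e., $\hat f$ is Willmore.

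\textbf{Main obstacle.} The delicate point is converting the $L^2$-in-time control on $\lambda_j/\CalA(f_j)$ into pointwise-in-time control at $t=\hat c$, and extracting the Lagrange multiplier limits needed to pass \eqref{eq:IsoWF} to the limit. This hinges on parabolic regularity for $\partial_t f_j$ together with the vanishing of the total energy dissipation as $j\to\infty$, and must be combined carefully with the non-local structure of $\lambda_j$ and the potentially degenerate denominator in \eqref{eq:deflambda}, which is bounded away from zero thanks to \Cref{lem: lambda nenner bound}.
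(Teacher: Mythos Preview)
Your overall architecture---uniform regularity from \Cref{prop:high ord small conc} together with \Cref{lem:lambdaBound}, then Langer-type compactness for (i) and (ii)---matches the paper and is correct. The difficulties are in (iii) and, more seriously, in (iv).

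For (iii), the case analysis on whether $\hat H$ is constant is both unnecessary and incomplete. Once you know $\mu_{1,j}(\hat c),\mu_{2,j}(\hat c)$ are uniformly bounded (which follows from the global $L^2$-bounds \eqref{eq:small conc global bound D^2A} on $\nabla^2 A_j$ combined with \Cref{lem: lambda nenner bound}, cf.\ \eqref{eq:lambda/A Linfty estimate}--\eqref{eq:lambda/V Linfty estimate}), a subsequence converges to some $(\lambda_1,\lambda_2)$ and the limit equation follows directly; no dichotomy is needed. Also note that your ``uniform $C^k$-in-$t$ bounds on $\partial_t f_j$'' already presuppose bounds on $\partial_t(\lambda_j/\CalA_j)$ and $\partial_t(\lambda_j/\CalV_j)$, which is precisely what the paper establishes in \eqref{eq:dt lambda/A bound}--\eqref{eq:dt lambda/V bound}; you are implicitly using the same estimates, not bypassing them.

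The real gap is in (iv). The formula \eqref{eq:deflambda}, \Cref{lem: lambda nenner bound}, and the $L^\infty$/$L^2$ curvature bounds only yield that $\lambda_j/\CalA(f_j)$ is \emph{bounded} at $t=\hat c$, not that it tends to zero: Cauchy--Schwarz on the numerator gives $|\lambda_j|\leq \|\nabla\CalW_0(f_j)\|_{L^2}/\|\tfrac{3}{\CalA}H_j-\tfrac{2}{\CalV}\|_{L^2}$, and the denominator is of order $\CalA_j^{-1}$ while the numerator is merely $O(1)$; so $\lambda_j/\CalA_j=O(1)$, with no decay as $\CalA_j\to\infty$. The paper instead uses the refined estimate of \Cref{lem:lambda new} (test \eqref{eq:IsoWF} against $\nu$) to bound $|\lambda_j/\CalA_j|^2$ by $C\bigl(\int|\partial_t f_j|^2\diff\mu_j + \CalA_j^{-1}\bigr)$, integrates in time over $[\xi,\hat c]$ to exploit the vanishing energy dissipation, and then transfers this to the single time $t=\hat c$ via the $C^0([\xi,\hat c])$-convergence of $\lambda_j/\CalA_j$ coming from the $C^1$-in-time bounds. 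Your pointwise argument cannot see the energy decay; the detour through \Cref{lem:lambda new} and time-integration is the missing idea.
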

Any Helfrich immersion $\hat{f}\colon\hat{\Sigma}\to\R^3$ which arises from the process described above is called a \emph{concentration limit}. More precisely, we call $\hat{f}$ a \emph{blow-up} if $r_j\to 0$, a \emph{blow-down} for $r_j \to\infty$ and a \emph{limit under translation} if $r_j\to r\in (0, \infty)$. Note that by \Cref{lem:existence tjrjxj} (i) the last two can only occur if $T=\infty$.

We highlight that \Cref{lem:blowup existence} (iv) is particularly remarkable, since it means that under the assumption of diverging area, the constraint vanishes in the concentration limit, see also \cite[Theorem 6.2]{RuppVolumePreserving} for a similar \emph{rigidity result.} This will be essential in the proof of \Cref{thm:convergence main}.

\begin{proof}[{Proof of \Cref{lem:blowup existence}}]
	After passing to a subsequence, we may assume $r_j\to r$ in $[0,\infty]$. 
	We have $\bar{\varepsilon}<\varepsilon_1$ and $\hat{c}\in (0,1)$ by \Cref{prop:Lifespan general} and hence by \Cref{lem:existence tjrjxj} (ii) we find $\kappa_j(t, 1)<\varepsilon_1$ for all $t\in [0, \hat{c}]$. We may thus use \Cref{lem:lambdaBound} to bound $\int_0^t \frac{\lambda_j^2}{\CalA(f_j)}\diff \tau \leq C(K, \sigma,g)$ for all $t\in [0, \hat{c}]$ and for all $j\in \N$. Consequently, using \Cref{prop:high ord small conc} we conclude that for any $j\in \N$ we have
	\begin{align}\label{eq:Af_j Cinfty bounds}
		\norm{\nabla^m A_j}{\infty} &\leq C(m,K, \sigma,g) t^{-\frac{m+1}{4}},\\
		\norm{\nabla^m A_j}{L^2(\diff \mu_j)}&\leq C(m, K, \sigma,g)t^{-\frac{m}{4}} \quad\text{for }0< t\leq \hat{c}.\label{eq:Af_j global L^2 bounds}
	\end{align}
	Moreover, from Simon's monotonicity formula, cf.\ \cite[(1.3)]{SimonWillmore}, for any $R>0$ we find
	\begin{align}\label{eq:SimonLocalAreabound}
		R^{-2}\mu_j\left(B_R(0)\right) \leq C K<\infty\quad \text{for all }j\in \N.
	\end{align}

	Thus, we may apply the localized version of Langer's compactness theorem (\cite[Theorem 4.2]{KSSI}, see also \FFF\cite[Appendix A]{RuppVolumePreserving}\EEE) to the sequence of immersions $\hat{f}_j \defeq f_j(\hat
	c, \cdot)$.
	After passing to a subsequence, we thus find a proper limit immersion $\hat{f}\colon\hat{\Sigma}\to\R^3$, where $\hat{\Sigma}$ is a complete (possibly empty) surface without boundary, diffeomorphisms $\phi_j\colon\hat{\Sigma}(j)\to U_j$, where $U_j\subset {\Sigma_g}$ are open sets and \linebreak $\hat{\Sigma}(j) = \{ p\in \hat{\Sigma}\mid \abs{\hat{f}(p)}<j\}$, and functions $u_j \in C^{\infty}(\hat{\Sigma}(j);\R^3)$ such that we have
	\begin{align}\label{eq:hat f representation}
		& \hat{f}_j \circ\phi_j = \hat{f} + u_j \quad \text{on }\hat{\Sigma}(j)
	\end{align}
	as well as $\norm{\hat{\nabla}^m u_j}{L^{\infty}(\hat{\Sigma}(j), \hat{g})}\to 0$ as $j\to\infty$ for all $m\in \N_0$, so (i) is proven.
	
	Moreover, sending $j\to\infty$ in \Cref{lem:existence tjrjxj} (iii) and using the smooth convergence on compact subsets, it follows $\int_{\overline{B_1(0)}}\abs{\hat{A}}^2\diff\hat{\mu}>0$ and hence in particular $\hat{\Sigma}\neq \emptyset$. The second statement in (ii) follows from the scaling invariance and the lower semicontinuity of the Willmore energy with respect to smooth convergence on compact subsets of $\R^3$, see \cite[Appendix B]{DMSS20} for instance.
	
	 Let $\xi\in (0, \hat{c})$ be arbitrary. Using \eqref{eq:Af_j global L^2 bounds} and arguing as in \eqref{eq:lambda/A Linfty estimate} and \eqref{eq:lambda/V Linfty estimate}, we find
	\begin{align}\label{eq:lambda/A+lambda/Vestimate}
		\Abs{\frac{\lambda_j}{\CalA(f_j)}} + \Abs{\frac{\lambda_j}{\CalV(f_j)}} \leq C(\xi,K, \sigma,g)\quad \text{for all }t\in [\xi, \hat{c}], j\in \N,
	\end{align}
	which when combined with \eqref{eq:IsoWF} and \eqref{eq:Af_j Cinfty bounds} immediately yields
	\begin{align}\label{eq:dt f_j bound}
		\norm{\partial_t f_j}{\infty}\leq C(\xi, K, \sigma,g)\quad \text{for all }t\in[\xi,\hat{c}], j\in \N.
	\end{align}
	Now, as a consequence of \Cref{lem:higherorder evol}, we find
	\begin{align}
		\Norm{\partial_t \nabla^m A_j}{\infty} &\leq C(m,\xi, K, \sigma, g),\\
		\Norm{\partial_t \nabla^m A_j}{L^2(\diff \mu_j)} &\leq C(m,\xi, K, \sigma, g)\quad \text{for all }t\in [\xi, \hat{c}],m\in \N_0, j\in \N,\label{eq:dt A^m_j bound}
	\end{align}
	using \eqref{eq:Af_j Cinfty bounds}, \eqref{eq:Af_j global L^2 bounds} and \eqref{eq:lambda/A+lambda/Vestimate}.
	Similarly,  using \Cref{lem:H higher order evolution} instead we obtain	
	\begin{align}
		\Norm{\partial_t \nabla^m H_j}{\infty} &\leq C(m,\xi, K, \sigma, g),\\
		\Norm{\partial_t \nabla^m H_j}{L^2(\diff \mu_j)} &\leq C(m,\xi, K, \sigma, g)\quad \text{for all }t\in [\xi, \hat{c}], j\in \N,m\in \N_0.\label{eq:dt H^m_j bound}
	\end{align}
%\unsure{
%	Hence, for all $l\in \N_0, k\in \N$, one obtains
%	\begin{align}\label{eq:dt Plk bound}
%		\Norm{\partial_t P^l_k(A_j)}{\infty}&\leq C(\xi,l,k, K, g, \sigma),\\
%		\Norm{\partial_t P^l_k(A_j)}{L^2(\diff\mu_j)}&\leq C(\xi,l,k, K, g, \sigma) \quad \text{for all }t\in[0, \hat{c}], j\in \N.
%	\end{align}
%}
%	As a consequence, for $q=1,2$, we find
%	\begin{align}
%		\partial_t \mathcal{A}(f_j)^{-\frac{q}{2}} \int P_k^l(A_j)\diff \mu_j&= -\frac{q}{2}\mathcal{A}^{-\frac{q+2}{2}} \int P_k^l(A_j)\diff \mu_j + \mathcal{A}(f_j)^{-\frac{q}{2}} \int \partial_t P_k^l(A_j)\diff \mu_j \\
%		&\quad  - \mathcal{A}(f_j)^{-\frac{q}{2}} \int P_k^l(A_j) \langle H_j, \partial_t f_j\rangle \diff \mu_j
%	\end{align}
	We will now use this to bound the derivative of the Lagrange multiplier. To that end, we observe that using $\CalI(f_j)\equiv \sigma$ and integration by parts, we find
	\begin{align}\label{eq:lambda/A explicit}
	\frac{\lambda_j}{\CalA(f_j)}&= \frac{-3 \int\abs{\nabla H_j}^2\diff \mu_j+3 \int\abs{A_j^0}^2H_j^2\diff \mu_j-\frac{12\sqrt{\frac{\pi}{\sigma}}}{\CalA(f_j)^{\frac{1}{2}}}\int\abs{A_j^0}^2H_j\diff \mu_j}{\int\Abs{3H_j -\frac{12 \sqrt{\frac{\pi}{\sigma}}}{\CalA(f_j)^{\frac{1}{2}}}}^2\diff \mu_j.}.
	\end{align}
	Note that by \Cref{lem: lambda nenner bound} the denominator is bounded from below by some $C(K, \sigma)>0$. Using \eqref{eq:dtdmu}, \eqref{eq:Af_j Cinfty bounds},\eqref{eq:Af_j global L^2 bounds}, \eqref{eq:dt f_j bound}, \eqref{eq:dt A^m_j bound} and \eqref{eq:dt H^m_j bound}, by direct computation we find
	\begin{align}\label{eq:dt lambda/A bound}
		\Abs{\partial_t \frac{\lambda_j}{\CalA(f_j)}}\leq C(\xi, K, \sigma, g)\quad \text{for all }t\in[0, \hat{c}], j\in \N.
	\end{align}
	Now, \FFF using $\CalI(f_j)\equiv \sigma$\EEE  and \eqref{eq:dtdmu} we infer
	\begin{align}
		\partial_t \frac{\lambda_j}{\CalV(f_j)} &= 
		 \FFF C(\sigma) \left( \partial_t \frac{\lambda_j}{\CalA(f_j)} \CalA(f_j)^{-\frac{1}{2}} - \frac{1}{2} \frac{\lambda_j}{\mathcal{V}(f_j)} \mathcal{A}(f_j)^{-\frac{3}{2}}  \partial_t \mathcal{A}(f_j)\right)\\
		 & = C(\sigma) \left( \partial_t \frac{\lambda_j}{\CalA(f_j)} \CalA(f_j)^{-\frac{1}{2}} + \FFF \frac{1}{2} \frac{\lambda_j}{\CalA(f_j)} \CalA(f_j)^{-\frac{3}{2}} \int H_j \langle \partial_t f_j, \nu_j \rangle\diff \mu_j\right). \EEE
	\end{align}
	Since $\kappa_j(t,1)<\varepsilon_1$ for $t\in[0,\hat{c}]$, we can apply \eqref{eq:rho^2<= A} with $\rho=1$ to obtain $\CalA(f_j)^{-1}\leq C$ and hence using \eqref{eq:dt lambda/A bound}, \eqref{eq:lambda/A+lambda/Vestimate}, \eqref{eq:dt f_j bound} and \eqref{eq:dtW<=0} we have
%
%
%	Similarly, from \eqref{eq:lambda/A explicit} and using $\CalI(f_j)\equiv \sigma$ we find
%	\begin{align}
%		\frac{\lambda_j}{\CalV(f_j)} = C(\sigma) \frac{-\frac{3}{\CalA(f_j)^{\frac{1}{2}}} \int\abs{\nabla H_j}^2\diff \mu_j+\frac{3}{\CalA(f_j)} \int\abs{A_j^0}^2H_j^2\diff \mu_j-\frac{12\sqrt{\frac{\pi}{\sigma}}}{\CalA(f_j)}\int\abs{A_j^0}^2H_j\diff \mu_j}{\int\Abs{3H_j -\frac{12 \sqrt{\frac{\pi}{\sigma}}}{\CalA(f_j)^{\frac{1}{2}}}}^2\diff \mu_j},
%	\end{align}
%	such that with similar estimates we have
	\begin{align}\label{eq:dt lambda/V bound}
		\Abs{\partial_t \frac{\lambda_j}{\CalV(f_j)}} \leq C(\xi, K,\sigma, g)\quad \text{for all }t\in [0, \hat{c}], j\in \N.
	\end{align}
	
	For $j\in\N$, we now define the flows $\tilde{f}_j \defeq f_j\circ\phi_j\defeq f_j(\cdot, \phi_j(\cdot))\colon(0, \hat{c}]\times \hat{\Sigma}(j)\to\R^3$ and observe that they satisfy the $L^{\infty}$-estimates \eqref{eq:Af_j Cinfty bounds} with $\tilde{A}_j$ instead of $A_j$ and the evolution equation
	\begin{align}\label{eq:dt tilde f_j}
		\partial_t \tilde{f}_j = \left[- \Delta \tilde{H}_j - \abs{\tilde{A}^0_j}^2\tilde{H}_j + \lambda_j \left(\frac{3}{\CalA(f_j)} \tilde{H}_j -\frac{2}{\CalV(f_j)}\right)\right]\nu_j\circ \phi_j.
	\end{align}

	As in \FFF \cite[Proof of Theorem 6.2]{RuppVolumePreserving}\EEE, the estimates for $\tilde{f}_j$ together with the $C^1$-estimates for $\frac{\lambda_j}{\CalA(f_j)}$ and $\frac{\lambda_j}{\CalV(f_j)}$ can then be used to deduce that, after passing to a subsequence, the flows $\tilde{f}_j$ converge in $C^1([\xi, \hat{c}];C^m(P;\R^3))$ for all $m\in\N$ and all $P\subset \hat{\Sigma}$ compact to a limit flow $f_{lim}\colon[\xi, \hat{c}]\times \hat{\Sigma}\to\R^3$. Moreover, we may assume $\nu_j \circ \phi_j \to \nu_{lim}$ in $C^1([\xi, \hat{c}];C^m(P;\R^3))$ for all $m\in \N$ and all $P\subset \hat{\Sigma}$ compact, where $\nu_{lim}(t, \cdot)$ is a smooth normal vector field along $f_{lim}(t, \cdot)$ for all $t\in [\xi, \hat{c}]$, as well as \begin{align}
		\frac{\lambda(f_j)}{\CalA(f_j)} \to \lambda_{lim,1}\quad \text{and}\quad \frac{\lambda(f_j)}{\CalV(f_j)} \to \lambda_{lim,2}\quad\text{in } C^0([\xi, \hat{c}];\R) \text{ as }j\to\infty.
	\end{align}
	
	Now, let $P\subset\hat{\Sigma}$ be a fixed compact set and let $j\in \N$ be large enough. Then, using \eqref{eq:dt tilde f_j},  \eqref{eq:dtI=0} and \eqref{eq:dtW<=0}  we find
	\begin{align}
		\int_\xi^{\hat{c}} \int_{P} \abs{\partial_t \tilde{f}_j}^2\diff \tilde{\mu}_j \diff t \leq \int_\xi^{\hat{c}}\int_{\Sigma}\langle -\nabla \CalW_0(f_j) + \lambda_j \sigma^{-1}\nabla\CalI(f_j), \partial_t f_j\rangle \diff \mu_j \diff t =\int_{\xi}^{\hat{c}} \partial_t \CalW_0(f_j)\diff t.
	\end{align}
	In particular, taking $j \to\infty$ and using $\tilde{f}_j \to f_{lim}$ in $C^1([\xi, \hat{c}];C^m(P;\R^3))$ for all $m\in \N$, we find by \Cref{rem:strictLyapunov} (ii)
	\begin{align}
		\int_\xi \int_{P} \abs{\partial_t f_{lim}}\diff \mu_{lim}\diff t\leq \lim_{j\to\infty} \left(\CalW_0(f(t_j+r_j^4\xi))-\CalW_0(f(t_j+r_j^4\hat{c}))\right) =0.
	\end{align}
	Consequently, we have 	$f_{lim} \equiv f_{lim}(\hat{c}, \cdot) =\lim_{j\to\infty} f_j(\hat{c}, \phi_j(\cdot)) = \lim_{j\to\infty} \hat{f}_j\circ\phi_j = \hat{f}$ in $C^m(P;\R^3)$ for all $m\in \N$ and $P\subset \hat{\Sigma}$ compact.
	 
	We observe that $\hat{\nu} \defeq \nu_{lim}(\hat{c}, \cdot)$ is a global and smooth normal vector field along $\hat{f}$ and hence $\hat{\Sigma}$ is orientable. Setting $\hat{\lambda}_1\defeq \lambda_{lim,1}(\hat{c})$, $\hat{\lambda}_2\defeq \lambda_{lim,2}(\hat{c})$  and using \eqref{eq:dt tilde f_j} we find
	\begin{align}
		\left(-\Delta \hat{H}-\abs{\hat{A}^0}^2\hat{H}+ 3\hat{\lambda}_1 \hat{H} - 2\hat{\lambda}_2\right) \hat{\nu}= \lim_{j\to\infty} \partial_t \tilde{f}_j(\hat{c}, \cdot) = \partial_t f_{lim}(\hat{c}, \cdot) = 0\quad \text{on }\hat{\Sigma},
	\end{align}
	so $\hat{f}$ is a Helfrich immersion and (iii) is proven.
	
	For (iv), we now assume $\CalA(\hat{f}_j)\to\infty$ as $j\to\infty$. 
	By \Cref{lem:lambda new}, for all $j\in\N$ and $\xi\in (0, \hat{c})$, we have by Cauchy--Schwarz
	\begin{align}
		&\int_\xi^{\hat{c}} \Abs{\frac{{\lambda}_j}{\CalA({f}_j)}}^2 \diff t \leq C(K, \sigma) \int_\xi^{\hat{c}}  \left(\int \abs{\partial_t f_j}^2 \diff \mu_j + \CalA({f}_j)^{-1}\left(\int\abs{A^0_j}^2\abs{H_j}\diff \mu_j\right)^2\right)\diff t \\
		%&\leq C(\CalW(f_0),\sigma) \Big[ \CalW(f(t_j+r_j^4\xi,\cdot))-\CalW(f(t_j+r_j^4\hat{c}, \cdot))\\
		%&\qquad\qquad\qquad\quad+ \int_\xi^{\hat{c}} \CalA(f_j)^{-\frac{1}{2}} \norm{H_j}{\infty} \CalW_0(f_j)\diff t \Big]\\
		&\quad \leq C(K, \sigma,g)\Big[ \CalW(f(t_j+r_j^4\xi,\cdot))-\CalW(f(t_j+r_j^4\hat{c}, \cdot))+ \int_\xi^{\hat{c}}\CalA(f_j)^{-1}\diff t \Big],\label{eq:lambda/A nabla H to zero}
	\end{align}
	where we estimated $\int \abs{A_j}^3  \diff \mu_j \leq C(\xi, K, \sigma,g)$ for all $t\in [\xi, \hat{c}]$, using \eqref{eq:Af_j Cinfty bounds} and \eqref{eq:dtW<=0}. Moreover, as a consequence of \eqref{eq:dtdmu} and \eqref{eq:dtW<=0}, for all $t\in [0, \hat{c}]$ we have
	\begin{align}
		\Abs{\CalA(f_j(t, \cdot))-\CalA(f_j(\hat{c}, \cdot))}&\leq \Abs{\int_t^{\hat{c}} \FFF \int \EEE \langle H_j \nu_j, \partial_t f_j \rangle \diff \mu_j \diff \tau}\\
		& \leq 2\hat{c} \CalW(f_0) + \frac{1}{2}\int_0^{\hat{c}}\int\abs{\partial_t f_j}^2\diff \mu_j \diff \tau \leq C(K, \sigma,g),
	\end{align}
	so that $\CalA(f_j(t, \cdot)) \geq \CalA(f_j(\hat{c})) - C(K,\sigma,g)$ for all $t\in [0, \hat{c}]$ and hence the last term on the right hand side of \eqref{eq:lambda/A nabla H to zero} goes to zero as $j\to\infty$. Since  $t_j\nearrow T$, the first term in \eqref{eq:lambda/A nabla H to zero} converges to zero for $j\to\infty$. Consequently
	\begin{align}
		0=\lim_{j\to\infty}\int_{\xi}^{\hat{c}} \Abs{\frac{\lambda_j}{\CalA(f_j)}}^2\diff t = \int_\xi^{\hat{c}} \abs{\lambda_{lim,1}}^2\diff t,
	\end{align}
	so that in particular, $\hat{\lambda}_1 =\lambda_{lim,1}(\hat{c})=0$. Moreover, as $\CalI(f_j)\equiv \sigma$, from \eqref{eq:defI} we obtain 
	\begin{align}
		\Abs{\frac{\lambda_j(\hat{c})}{\CalV(f_j(\hat{c}, \cdot))}} = C(\sigma) \Abs{\frac{\lambda_j(\hat{c})}{\CalA(f_j(\hat{c}, \cdot))}} \CalA(f_j(\hat{c}, \cdot))^{-\frac{1}{2}}\to 0, \quad j\to\infty,
	\end{align}
	so $\hat{\lambda}_2=0$ and thus $\hat{f}$ is a Willmore immersion.
\end{proof}

%Exactly as in \cite[Lemma 4.3]{KSSI}, we have the following
%
%\begin{lem}\label{lem:KSSI Lemma 4.3}
%	Let $\hat{f}\colon\hat{\Sigma}\to\R^3$ be as in \Cref{lem:blowup existence}. If one component $C$ of $\hat{\Sigma}$ is compact, then $C=\hat{\Sigma}$ and $\Sigma_g$ is diffeomorphic to $C= \hat{\Sigma}$.
%\end{lem}

\subsection{The constrained \texorpdfstring{\L ojasiewicz}{Lojasiewicz}--Simon inequality}

\FFF In this section, we establish a {\L}o\-ja\-sie\-wicz--Simon inequality \cite{Loja63,Loja65,MR0727703}. While the unconstrained Willmore energy satisfies such an inequality \cite{CFS09}, the constraint of fixed isoperimetric ratio requires us to prove a refined estimate. To that end,  we rely on the general framework of \emph{constrained} or \emph{refined} {\L}ojasiewicz--Simon inequalities on submanifolds of Banach spaces \cite{Rupp}, see also \cite[Chapter 1, Section 1.2]{Rupp_2022}. \EEE
%In this section, we will prove the following \emph{constrained} or \emph{refined} {\L}ojasiewicz--Simon inequality for the Willmore energy subject to the constraint of fixed isoperimetric ratio, cf.\ \cite{Rupp}. 

\begin{thm}[Constrained {\L}ojasiewicz--Simon inequality]\label{thm:Loja}
	Let $f\colon\Sigma_g\to\R^3$ be a Helfrich immersion with $\CalI(f)=\sigma\in (0,1)$ such that $H_f \not \equiv const$. Then, there exist $C, r>0$ and $\theta\in (0, \frac{1}{2}]$ such that for all immersions ${h}\in W^{4,2}(\Sigma_g;\R^3)$ with $\norm{h-f}{W^{4,2}}\leq r$ and $\CalI(h)=\sigma$ we have
	\begin{align}
		\abs{\CalW_0(h)-\CalW_0(f)}^{1-\theta}\leq C\Norm{\nabla{\CalW_0}(h)-\lambda(h)\sigma^{-1}\nabla \CalI(h)}{L^2(\diff \mu_{{h}})}.
	\end{align}
\end{thm}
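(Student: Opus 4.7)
The plan is to derive this as a consequence of an abstract constrained Łojasiewicz--Simon inequality applied to the pair of functionals $(\CalW_0, \CalI)$ on a suitable $W^{4,2}$-neighborhood of $f$, exploiting the analytic framework developed by the author in \cite{Rupp}. The first step is to set up analytic coordinates by parametrizing a $W^{4,2}$-neighborhood of $f$ via normal graphs $h = f + \phi\, \nu_f$ with $\phi \in W^{4,2}(\Sigma_g;\R)$ of small norm; this is a diffeomorphism of Banach manifolds near $\phi=0$, in which the expressions for $\mu_h, H_h, A^0_h$ (and hence $\CalW_0, \CalA, \CalV, \CalI$) become real-analytic functionals of $\phi$.

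By the assumption $H_f\not\equiv const$ and the computation in \Cref{rem:CMC}, the denominator in \eqref{eq:deflambda} is strictly positive at $f$, so after shrinking the neighborhood the map $h\mapsto \lambda(h)$ extends real-analytically to nearby immersions; in particular, $\nabla\CalI(f)\neq 0$ in $L^2(\diff\mu_f)$, so the constraint has surjective linearization and the level set $M_\sigma = \{h : \CalI(h) = \sigma\}$ is locally a real-analytic codimension-one submanifold containing $f$. The second ingredient I would verify is the Fredholm hypothesis: the linearization at $h=f$ of the tangential Euler--Lagrange operator $h\mapsto \nabla\CalW_0(h) - \lambda(h)\sigma^{-1} \nabla\CalI(h)$ is a $4^{\text{th}}$-order linear PDE on the compact surface $\Sigma_g$ whose principal part is $\phi\mapsto \Delta_{g_f}^2 \phi$, plus a compact (in part non-local) correction coming from the Lagrange multiplier; by standard elliptic theory on closed surfaces this is Fredholm of index zero as a map $W^{4,2}\to L^2$.

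Given these inputs, the abstract constrained Łojasiewicz--Simon theorem from \cite{Rupp} yields constants $C,r>0$ and $\theta\in (0,\tfrac{1}{2}]$ with an inequality of the stated form, where on the right-hand side one initially has the $L^2(\diff\mu_h)$-norm of the projection $P_{T_hM_\sigma}\nabla\CalW_0(h)$. To conclude, I would identify this projection with $\nabla\CalW_0(h) - \lambda(h)\sigma^{-1}\nabla\CalI(h)$: by \eqref{eq:L^2gradI}, the $L^2$-orthogonal complement of $T_hM_\sigma$ is one-dimensional and spanned by $\nabla\CalI(h)$, and the formula \eqref{eq:deflambda} is precisely designed so that the above difference is $L^2$-orthogonal to $\nabla\CalI(h)$. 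The main obstacle I foresee is carefully verifying the functional-analytic hypotheses of the abstract framework --- in particular the analyticity of $h\mapsto \nabla\CalW_0(h) - \lambda(h)\sigma^{-1}\nabla\CalI(h)$ as a map $W^{4,2}\to L^2$, together with the index-zero Fredholm property --- in the precise topologies required, given the non-local, quotient-type dependence of $\lambda(h)$ on $h$; this is typically the technical heart of applying a constrained Łojasiewicz--Simon theorem in a geometric setting.
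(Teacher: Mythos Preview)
Your proposal is correct and follows essentially the same route as the paper: reduce to normal perturbations, verify analyticity of $\CalW_0$ and $\CalI$ and their $L^2$-gradients, check that $\nabla\CalI(f)\neq 0$ from $H_f\not\equiv const$, and that the linearized constrained Euler--Lagrange operator is Fredholm of index zero (the paper does this by showing $(\nabla_H W)'(0)$ is Fredholm of index zero and $(\nabla_H I)'(0)$ is compact, which is equivalent to your formulation), then invoke the abstract result from \cite{Rupp}. One small point you glossed over: the normal-graph parametrization does not cover \emph{all} nearby $W^{4,2}$-immersions $h$, only those which are normal graphs over $f$; the paper handles the passage from normal directions to arbitrary $h$ with $\norm{h-f}{W^{4,2}}\leq r$ by invoking the diffeomorphism invariance of $\CalW_0$ and $\CalI$, following \cite[p.~357]{CFS09}.
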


The proof of this result is very similar to \cite[Section 7.1]{RuppVolumePreserving}, so we will only provide full details on the differences.
Throughout this section we will fix a smooth immersion ${f}\colon\Sigma_g\to\R^3$ with $\CalI(f)=\sigma\in (0,1)$. The \emph{normal Sobolev spaces} along $f$ are 
\begin{align}
	W^{k,2}(\Sigma_g;\R^3)^{\perp}\defeq \{\phi\in W^{k,2}(\Sigma_g;\R^3)\mid P^{\perp}\phi=\phi\},
\end{align}
for $k\in \N_0$, with $L^2(\Sigma_g;\R^3)^{\perp}\defeq W^{0,2}(\Sigma_g;\R^3)^{\perp}$. Here, the $L^2$-inner product always has to be understood with respect to the measure $\mu_f$ and $P^{\perp}$ denotes the normal projection along $f$, given by $P^{\perp} X \defeq \langle X, \nu_f\rangle \nu_f$ for any vector field $X$ along $f$.

Let $r>0$ be sufficiently small and let $$\tilde{U}\defeq \{ \phi\in W^{4,2}(\Sigma_g;\R^3)^{\perp}\mid \phi=u\nu_f \text{ for } \norm{u}{W^{4,2}(\Sigma_g;\R)}<r\}.$$ 
Consider the shifted energies, defined by
\begin{align}
	&W\colon \tilde{U} \to \R, W(\phi)\defeq {\CalW}_0(f+\phi),\\
	&I\colon \tilde{U}\to \R, I(\phi)\defeq \CalI(f+\phi).
\end{align}
Note that this is well-defined, since $f+\phi$ is an immersion for all $\phi\in \tilde{U}$ with $r>0$ small enough, cf.\ \FFF \cite[Lemma 7.5 (i)]{RuppVolumePreserving}\EEE. The first main ingredient towards proving \Cref{thm:Loja} is the analyticity of the energy and the constraint.

\begin{lem}\label{lem:analyticity}
	 For $r>0$ small enough, the following maps are analytic.
	\begin{enumerate}[(i)]
		\item the function $\tilde{U}\to \R, \phi\mapsto W(\phi)$;
		\item the function $\tilde{U}\to L^2(\Sigma_g;\R^3), \phi\mapsto \nabla{\CalW_0}(f+\phi)\rho_{f+\phi}$, where $\diff \mu_{f+\phi} = \rho_{f+\phi}\diff \mu_f$; 
		\item the function $\tilde{U}\to\R, \phi\mapsto I(\phi)$;
		\item the function $\tilde{U}\to L^2(\Sigma_g;\R^3), \phi\mapsto \nabla \CalI(f+\phi)\rho_{f+\phi}$.
	\end{enumerate}
\end{lem}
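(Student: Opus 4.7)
The plan is to view every geometric quantity of $f+\phi$ as a composition of analytic Banach space maps: polynomial expressions in the derivatives of $\phi$, together with inversion and square roots of quantities that stay bounded away from zero for $\phi$ in a sufficiently small $W^{4,2}$-neighborhood of $0$. First I would fix a finite atlas of $\Sigma_g$ and a subordinate partition of unity, and observe that the maps $\phi\mapsto \partial_i(f+\phi)$ and $\phi\mapsto \partial_i\partial_j(f+\phi)$ are continuous linear from $\tilde U\subset W^{4,2}$ into $W^{3,2}$ and $W^{2,2}$ respectively. Since $\Sigma_g$ is two-dimensional, $W^{2,2}(\Sigma_g)$ is a Banach algebra, which makes pointwise products of such quantities analytic as maps into $W^{2,2}$.

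Next, I would assemble the building blocks in this order: the induced metric $g_{ij}(\phi)=\langle\partial_i(f+\phi),\partial_j(f+\phi)\rangle$ is a bounded bilinear form in $\phi$, hence analytic into $W^{3,2}$; the determinant $\det g_{ij}(\phi)$ is a polynomial in $g_{ij}(\phi)$ and remains bounded below by a positive constant once $r$ is small, so $\sqrt{\det g_{ij}(\phi)}$ and $g^{ij}(\phi)$ are analytic by the standard power series for $t\mapsto\sqrt{t}$ and matrix inversion near an invertible element. Consequently $\nu_{f+\phi}=(\partial_1(f+\phi)\times\partial_2(f+\phi))/\sqrt{\det g_{ij}(\phi)}$ is analytic into $W^{3,2}$, the second fundamental form $A_{ij}(\phi)=\langle\partial_i\partial_j(f+\phi),\nu_{f+\phi}\rangle$ is analytic into $W^{2,2}$, and the derived quantities $H(\phi)$, $A^0_{ij}(\phi)$ and $|A^0(\phi)|^2$ inherit analyticity into $W^{2,2}$. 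The density $\rho_{f+\phi}=\sqrt{\det g_{ij}(\phi)/\det g_{ij}(0)}$ is analytic into $W^{3,2}$.

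From these ingredients, (i) and (iii) follow by integration against a partition of unity: $W(\phi)=\int |A^0(\phi)|^2\rho_{f+\phi}\diff\mu_f$ is a composition of analytic maps with the continuous linear integration functional, and the same applies to $\CalA(f+\phi)$ and $\CalV(f+\phi)$; analyticity of $I(\phi)=36\pi\,\CalV(f+\phi)^2/\CalA(f+\phi)^3$ then follows because $\CalA(f+\phi)$ stays bounded below and, by \Cref{lem:AleksandrovHopf}(i) and continuity, $\CalV(f+\phi)\neq 0$ for $r$ small. For (ii) and (iv) I would invoke the explicit formulas of \Cref{prop:L^2grads}: writing
\begin{align}
\nabla\CalW_0(f+\phi)\,\rho_{f+\phi}&=\bigl(\Delta_{g_{f+\phi}} H(\phi)+|A^0(\phi)|^2 H(\phi)\bigr)\nu_{f+\phi}\,\rho_{f+\phi},\\
\nabla\CalI(f+\phi)\,\rho_{f+\phi}&=I(\phi)\Bigl(\tfrac{3}{\CalA(f+\phi)}H(\phi)-\tfrac{2}{\CalV(f+\phi)}\Bigr)\nu_{f+\phi}\,\rho_{f+\phi},
\end{align}
and expanding $\Delta_{g_{f+\phi}}H(\phi)=(\det g)^{-1/2}\partial_i\bigl(\sqrt{\det g}\,g^{ij}\partial_j H(\phi)\bigr)$ locally, every factor is analytic into a Sobolev space of the appropriate order, and the product lands in $L^2(\Sigma_g;\R^3)$ because the sole top-order factor $\Delta H(\phi)\in L^2$ is multiplied by quantities in $W^{2,2}\hookrightarrow L^{\infty}$.

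The main subtlety, and the only place where care is needed, is the loss of regularity bookkeeping: since $\phi$ is only in $W^{4,2}$, top-order derivatives of $H(\phi)$ live only in $L^2$, so one must verify that each multiplicative factor appearing beside them is in $L^\infty$ (equivalently $W^{2,2}$) and that the inversion/root operations are performed only on quantities controlled in $W^{2,2}$ or better. Once this is tracked chart-by-chart and summed against the partition of unity, analyticity of all four maps follows exactly as in \cite{RuppVolumePreserving}, and the statement is proved.
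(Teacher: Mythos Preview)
Your proposal is correct and follows essentially the same route as the paper. The paper's proof simply cites \cite[Lemma 7.4, Lemma 7.5]{RuppVolumePreserving} and \cite[Lemma 3.2]{CFS09} for the analyticity of the individual building blocks (metric, normal, $H\nu$, density, $\CalA$, $\CalV$) and then assembles (iii) and (iv) from these exactly as you do; your write-up unpacks the Banach-algebra / composition argument that lies behind those citations rather than invoking them as black boxes.
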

\begin{proof}
	Statements (i) and (ii) are exactly as in \FFF \cite[Lemma 7.6 (ii) and (iii)]{RuppVolumePreserving}\EEE. By  \FFF\cite[Lemma 7.6 (i) and (iv)]{RuppVolumePreserving}\EEE, the maps $\tilde{U}\to \R, \phi\mapsto \CalA(f+\phi)$ and  $\tilde{U}\to \R, \phi\mapsto \CalV(f+\phi)$ are analytic and hence so is $I$ by definition of the isoperimetric ratio and since $\CalA(f+\phi)>0$ for all $\phi\in \tilde{U}$. For statement (iv) recall from \Cref{prop:L^2grads} that for $\phi\in \tilde{U}$ we have
	\begin{align}
		\nabla\CalI(f+\phi) = \CalI(f+\phi)  \left(\frac{3}{\CalA(f+\phi)} H_{f+\phi}\nu_{f+\phi} - \frac{2}{\CalV(f+\phi)}\nu_{f+\phi}\right).
	\end{align}
	We note that 
	$\tilde{U}\FFF \to \EEE C^0(\Sigma_g;\R^3), \phi \mapsto \nu_{f+\phi}$ is analytic by \FFF\cite[Lemma 7.5 (ii)]{RuppVolumePreserving}\EEE~and
	 $\tilde{U}\FFF \to \EEE L^2(\Sigma_g;\R^3)$, $\phi \mapsto H_{f+\phi}\nu_{f+\phi}$ is analytic by \cite[Lemma 3.2 (iv)]{CFS09}. We have $\CalA(f+\phi)>0$ for all $\phi\in \tilde{U}$ and $\CalV(f+\phi)\neq 0$ by continuity for $r>0$ sufficiently small, since $\CalI(f)=\sigma>0$. This implies (iv).
\end{proof}

We now compute the first and second variations of $W$ and $I$ in terms of their $H$-gradients, see \cite[Section 5]{Rupp}.

\begin{lem}\label{lem:variations}
	Let $H\defeq L^2(\Sigma_g;\R^3)^{\perp}$ and let $r>0$ be sufficiently small. For each $\phi\in \tilde{U}$, the $H$-gradients of $W$ and $I$ are given by
	\begin{align}
		\nabla_{H}W(\phi) &= P^{\perp} \nabla{\CalW_0}(f+\phi)\rho_{f+\phi},\\
		\nabla_{H}I(\phi) &=I(\phi) \left(\frac{3}{\CalA(f+\phi)} H_{f+\phi} - \frac{2}{\CalV(f+\phi)}\right)  P^{\perp}\nu_{f+\phi}\rho_{f+\phi}
		.\label{eq:WV H grads}
	\end{align}
	Moreover, the Fréchet-derivatives of the $H$-gradient maps of $W$ and $I$ at $u=0$ satisfy
	\begin{align}
		\left(\nabla_{H}W\right)^{\prime}(0) &\colon W^{4,2}(\Sigma_g;\R^3)^{\perp}\to L^2(\Sigma_g;\R^3)^{\perp} \quad\text{ is a Fredholm operator with index zero,} \\
		\left(\nabla_H I\right)^{\prime}(0)&\colon W^{4,2}(\Sigma_g;\R^3)^{\perp}\to L^2(\Sigma_g;\R^3)^{\perp} \quad\text{ is compact.}
	\end{align}
\end{lem}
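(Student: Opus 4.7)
The formulas for $\nabla_{H}W(\phi)$ and $\nabla_{H}I(\phi)$ follow directly from the first variation identities in \Cref{prop:L^2grads} by unpacking the definitions. Indeed, for any $\psi\in W^{4,2}(\Sigma_g;\R^3)^{\perp}$, the chain rule gives
\begin{align}
W'(\phi)\psi&=\CalW_0'(f+\phi)\psi=\int\langle \nabla\CalW_0(f+\phi),\psi\rangle \diff\mu_{f+\phi}\\
&=\int\langle P^{\perp}\nabla\CalW_0(f+\phi)\rho_{f+\phi},\psi\rangle\diff\mu_{f},
\end{align}
where in the last step we used that $\psi$ is normal along $f$. The analogous computation for $I$ yields the second formula, noting $\CalI(f+\phi)=I(\phi)$. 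The density factor $\rho_{f+\phi}$ appears precisely because $H=L^2(\diff\mu_f)^{\perp}$ (rather than $L^2(\diff\mu_{f+\phi})$) is the ambient Hilbert space.

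For the Fredholm property of $(\nabla_{H}W)'(0)$, the plan is to identify this operator as a linear elliptic differential operator of fourth order. By \eqref{eq:1st vari willmore}, $\nabla\CalW_0(f+\phi)$ contains $\Delta_{g_{f+\phi}}H_{f+\phi}$ and lower-order curvature expressions. Linearising at $\phi=0$ in the normal direction $\phi=u\nu_f$, a standard computation (cf.\ \cite[Lemma 7.6]{RuppVolumePreserving}) shows that the principal symbol of $u\mapsto (\nabla_{H}W)'(0)(u\nu_f)$ equals $|\xi|^4$, so the operator is elliptic of order $4$ on the closed manifold $\Sigma_g$. Since its formal $L^2$-adjoint has the same principal symbol (the leading term $\Delta^2$ is self-adjoint), standard elliptic theory on closed manifolds gives the Fredholm property with index zero as an operator $W^{4,2}(\Sigma_g;\R^3)^{\perp}\to L^2(\Sigma_g;\R^3)^{\perp}$.

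For compactness of $(\nabla_{H}I)'(0)$, the key observation is that the operator has order at most two. Indeed, inspecting \eqref{eq:WV H grads}, the highest-order dependence of $\nabla_{H}I(\phi)$ on $\phi$ comes solely through $H_{f+\phi}$, which depends on $\phi$ via at most second derivatives; all remaining quantities ($\CalA,\CalV,\nu_{f+\phi},\rho_{f+\phi},I(\phi)$) contribute derivatives of order at most one and $I$ itself is a scalar integral functional. Linearising, one obtains a continuous second-order linear differential operator
\begin{align}
L\colon W^{2,2}(\Sigma_g;\R^3)^{\perp}\to L^2(\Sigma_g;\R^3)^{\perp}
\end{align}
such that $(\nabla_{H}I)'(0)$ is the composition of the compact embedding $W^{4,2}\hookrightarrow W^{2,2}$ (which is compact by the Rellich--Kondrachov theorem on the closed manifold $\Sigma_g$) with $L$. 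A composition of a bounded operator with a compact operator is compact, giving the claim.

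The main obstacle is the bookkeeping in the linearisation for the Fredholm claim: one must verify that no tangential contributions obstruct the $L^2(\Sigma_g;\R^3)^{\perp}$-to-$L^2(\Sigma_g;\R^3)^{\perp}$ mapping property and that the principal symbol is indeed scalar and equal to $|\xi|^4$. This is routine but tedious, and mirrors the analogous arguments for the classical Willmore energy and for the area- and volume-constrained variants treated in \cite{Rupp,RuppVolumePreserving}.
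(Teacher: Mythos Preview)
Your proposal is correct and follows essentially the same approach as the paper: the gradient formulas are derived identically from \Cref{prop:L^2grads}, the Fredholm property is obtained from ellipticity of the fourth-order linearized Willmore operator (the paper cites \cite[Lemma~3.3 and p.~356]{CFS09} for this), and compactness of $(\nabla_H I)'(0)$ comes from the fact that it is at most second order together with Rellich--Kondrachov. The only cosmetic difference is that the paper writes out $(\nabla_H I)'(0)\phi$ explicitly (using in particular $\langle \frac{\diff}{\diff t}\vert_{t=0}\nu_{f+t\phi},\nu_f\rangle=0$) before invoking compactness, whereas you argue abstractly by counting derivative orders.
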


\begin{proof}
	For $\phi,\psi\in \tilde{U}$, we have by \Cref{prop:L^2grads}
	\begin{align}
		\dtzero W(\phi+t\psi) &= \int\langle \nabla{\CalW_0}(f+\phi), \psi\rangle\diff \mu_{f+\phi} \\
		&= \int \left\langle P^{\perp} \nabla{\CalW_0}(f+\phi)\rho_{f+\phi}, \psi\right\rangle \diff \mu_{f} = \left\langle P^{\perp} \nabla{\CalW_0}(f+\phi)\rho_{f+\phi}, \psi\right\rangle_H.
	\end{align}
	Similarly, the statement for $\nabla_H I$ can be shown. The Fredholm property of $(\nabla_HW)^{\prime}(0)$ follows from \eqref{eq:WvstildeW} and \cite[Lemma 3.3 and p. 356]{CFS09}. For the last statement, we observe that for all\linebreak $\phi\in W^{4,2}(\Sigma_g;\R^3)^{\perp}$ we have $\langle \dtzero \nu_{f+t\phi}, \nu_f\rangle =\frac{1}{2}\dtzero \abs{\nu_{f+t\phi}}^2=0$. Hence, using \eqref{eq:dtH} with $\xi= \langle \nu_f, \phi \rangle$ and \Cref{prop:L^2grads} we find 
	\begin{align}
		(\nabla_H I)^{\prime}(0)\phi &= \dtzero \left(I(t\phi) \left(\frac{3}{\CalA(f+t\phi)} H_{f+t\phi} - \frac{2}{\CalV(f+t\phi)}\right)  \rho_{f+t\phi}\right) \nu_f \\
		&= \langle\nabla\CalI(f), \phi\rangle_{L^2(\diff \mu_f)} \left(\frac{3}{\CalA(f)}H_f - \frac{2}{\CalV(f)}\right) \nu_f  - \frac{3\sigma}{\CalA(f)^2} \langle \nabla\CalA(f), \phi\rangle_{L^2(\diff \mu_f)}H_f\nu_f\\
		&\quad + \frac{3\sigma}{\CalA(f)} \left(\Delta \langle \nu_f, \phi\rangle+\abs{A_f}^2\langle \nu_f, \phi\rangle\right)\nu_f + \frac{2\sigma}{\CalV(f)^2} \langle\nabla\CalV(f),\phi\rangle_{L^2(\diff \mu_f)}\nu_f \\
		&\quad - \sigma\left(\frac{3}{\CalA(f)}H_f - \frac{2}{\CalV(f)}\right) H_f\langle \nu_f, \phi\rangle\nu_f,
	\end{align}
	where we used $\CalI(f)=\sigma$ and $\dtzero\rho_{f+t\phi} = -H_f\langle \nu_f, \phi\rangle$ by \eqref{eq:dtdmu}. Since this only involves terms of order two or less in $\phi\in W^{4,2}(\Sigma_g;\R^3)^{\perp}$, the claim follows from the Rellich--Kondrachov Theorem, see for instance \cite[Theorem 2.34]{Aubin}.
\end{proof}

\begin{proof}[Proof of \Cref{thm:Loja}]
	From the assumption $H_f\not\equiv const$, it follows that $\nabla \CalI(f)\neq 0$ and hence $\nabla_H I(0)\neq 0$. % if $\sigma\in (0,1)$ and if $f$ is embedded or $\Sigma_g=\S^2$.
	As in \FFF\cite[Propsition 7.4]{RuppVolumePreserving}\EEE, we can thus apply \cite[Corollary 5.2]{Rupp} to deduce that \Cref{thm:Loja} is satisfied in \emph{normal directions}, i.e.\ for the functional $W$ with the constraint\linebreak $I=\sigma$. With the methods from \cite[p. 357]{CFS09}, one can then use the invariance of the energies under diffeomorphisms to conclude that \Cref{thm:Loja} holds in all directions.
\end{proof}

As in \cite[Lemma 4.1]{CFS09}, the \L ojasiewicz--Simon inequality yields the following asymptotic stability result, see also \FFF\cite[Lemma 7.9]{RuppVolumePreserving}\EEE~and \cite[Theorem 2.1]{Lengeler} for related results in the context of constrained gradient flows in Hilbert spaces.

\begin{lem}\label{lem:LojaAsymStabil}
		Let $f_W\colon\Sigma_g\to\R^3$ be a Helfrich immersion with $\CalI(f_W)=\sigma\in (0,1)$, $H_{f_W}\not \equiv const$. 
		 %$\Sigma_g=\S^2$ or $f_W$ is embedded. 
		 Let $k\in \N$, $k\geq 4$, $\delta>0$. Then there exists $\varepsilon=\varepsilon(f_W)>0$ such that if $f\colon [0,T)\times \Sigma_g\to\R^3$ is a $\sigma$-isoperimetric Willmore flow satisfying
		\begin{enumerate}[(i)]
			\item $\norm{f_0-f_W}{C^{k,\alpha}}<\varepsilon$ for some $\alpha>0$;
			\item ${\CalW_0}(f(t))\geq {\CalW_0}(f_W)$ whenever $\norm{f(t)\circ \Phi(t) - f_W}{C^k}\leq \delta$ for diffeomorphisms $\Phi(t) \colon\Sigma_g\to\Sigma_g$;
		\end{enumerate}
		then the flow exists globally, i.e.\ we may take $T=\infty$. Moreover, as $t\to\infty$, it converges smoothly after reparametrization by some diffeomorphisms $\tilde{\Phi}(t)\colon\Sigma_g\to\Sigma_g$ to a Helfrich immersion $f_\infty$, satisfying ${\CalW_0}(f_W)={\CalW_0}(f_\infty)$ and $\norm{f_{\infty}-f_W}{C^k}\leq\delta$.
\end{lem}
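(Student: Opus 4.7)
The proof follows the classical Łojasiewicz--Simon stability scheme (as in \cite[Lemma 4.1]{CFS09} and \cite[Lemma 7.8]{RuppVolumePreserving}), adapted to the constrained flow \eqref{eq:IsoWF}. The overall idea is to work on a maximal interval $[0, T^*)$ on which a suitable reparametrization $\tilde f(t) \defeq f(t) \circ \Phi(t)$ stays in the $W^{4,2}$-neighborhood $\{\norm{h-f_W}{W^{4,2}} \leq r\}$ from \Cref{thm:Loja}, and to prove that on this interval the orbit has finite $L^1$-length in time, so that the flow cannot exit the neighborhood and must converge. Since $f_W$ satisfies $H_{f_W} \not\equiv const$, by continuity the denominator in \eqref{eq:deflambda} stays bounded from below along $f(t)$ as long as $f(t)$ is close to $f_W$; in particular $\lambda(f(t))$ remains well-defined and \Cref{prop:STE} produces a short-time solution starting from $f_0$. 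Choosing $\varepsilon>0$ small enough and invoking the smoothing estimates \Cref{prop:high ord small conc} (with small initial curvature concentration around each point of $f_W$), together with parabolic regularity up to the initial time for the $C^{k,\alpha}$-datum $f_0$, yields that $f$ remains $C^k$-close to $f_W$ initially, and we may choose smooth reparametrizations $\Phi(t)$ so that $\tilde f(t)-f_W$ is normal along $f_W$ for each $t$.

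\textbf{Main estimate.} On $[0,T^*)$, set $E(t) \defeq \CalW_0(f(t)) - \CalW_0(f_W)$; assumption (ii) gives $E(t) \geq 0$ whenever the corresponding $C^k$-distance is at most $\delta$, and I would choose $\varepsilon$ small enough so that this holds throughout $[0,T^*)$. From \eqref{eq:dtW<=0} and \eqref{eq:IsoWF}--\eqref{eq:deflambda},
\begin{align}
\dot E(t) = -\int \abs{\partial_t f}^2\diff \mu = -\Norm{\nabla\CalW_0(f)-\lambda(f)\sigma^{-1}\nabla\CalI(f)}{L^2(\diff\mu)}^{2}.
\end{align}
Applying \Cref{thm:Loja} to $h = \tilde f(t)$ (using that the functionals and the Lagrange multiplier are geometric, hence invariant under $\Phi(t)$),
\begin{align}
E(t)^{1-\theta} \leq C\,\Norm{\nabla\CalW_0(f)-\lambda(f)\sigma^{-1}\nabla\CalI(f)}{L^2(\diff\mu)}.
\end{align}
Combining these gives $-\frac{\diff}{\diff t} E(t)^{\theta} = \theta E(t)^{\theta-1}(-\dot E) \geq \theta C^{-1}\norm{\partial_t f}{L^2(\diff\mu)}$ at all times where $E(t)>0$, so integrating yields
\begin{align}
\int_0^t \norm{\partial_t f}{L^2(\diff\mu)}\diff\tau \leq \theta^{-1} C\, E(0)^{\theta} \leq C\varepsilon^{\theta}\quad\text{for all } t\in[0,T^*).
\end{align}

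\textbf{Continuation and convergence.} This $L^1$-in-time bound on $\norm{\partial_t f}{L^2}$ controls the $L^2$-distance $\norm{\tilde f(t)-\tilde f(s)}{L^2} \lesssim \int_s^t \norm{\partial_t f}{L^2}\diff\tau$. Upgraded via interpolation with the uniform $C^k$-bounds from \Cref{prop:high ord small conc} and standard Sobolev interpolation, this shows that $\tilde f(t)$ is Cauchy in $C^{k-1}$ (and in fact in every $C^m$ after suitably readjusting $\Phi(t)$). Hence $\tilde f(t) \to f_\infty$ smoothly as $t\to T^*$. If $\varepsilon$ was chosen small enough compared to $r$ and $\delta$, this limit still satisfies $\norm{f_\infty-f_W}{C^k}\leq \delta/2$, so by continuity one could extend beyond $T^*$ inside the neighborhood, contradicting maximality unless $T^* = T = \infty$ (we invoke \Cref{prop:STE} to continue the flow, which is allowed since $H$ of the limit is close to $H_{f_W}$ and hence not constant). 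Passing to the limit in \eqref{eq:IsoWF}, $\partial_t f \to 0$ forces $f_\infty$ to be a Helfrich immersion, and the energy identity gives $\CalW_0(f_\infty) = \CalW_0(f_W)$.

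\textbf{Main obstacle.} The principal technical difficulty is the simultaneous construction of the reparametrizations $\Phi(t)$ (so that $\tilde f(t)-f_W$ is normal along $f_W$, enabling application of \Cref{thm:Loja}) and the verification that the flow cannot leave the $W^{4,2}$-neighborhood before converging. Controlling the tangential part requires an ODE construction analogous to \cite[Lemma 7.8]{RuppVolumePreserving}, and must be done consistently with the Łojasiewicz bound so that the $L^1$-length estimate continues to control the reparametrized flow and not just the geometric orbit. A secondary point, slightly more delicate here than in the volume- or area-preserving case, is to ensure uniform lower bounds on the denominator of $\lambda$ from \eqref{eq:deflambda}; this is however automatic by continuity from $H_{f_W}\not\equiv const$ as long as the flow stays close to $f_W$.
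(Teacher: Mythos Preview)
Your proposal is correct and follows essentially the same approach as the paper, which omits the proof and simply states that it is a straightforward adaptation of \cite[Lemma 7.8]{RuppVolumePreserving} using the constrained \L ojasiewicz--Simon inequality in \Cref{thm:Loja}. Your outline reproduces precisely this scheme, including the correct identification of the energy decay $\dot E=-\norm{\partial_t f}{L^2}^2$, the key differential inequality $-\tfrac{\diff}{\diff t}E^\theta\geq c\,\norm{\partial_t f}{L^2}$, and the continuation and reparametrization issues that constitute the technical heart of the argument.
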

Note that by \Cref{lem:Helfrich vs stationary}, $f_W$ above is a stationary solution to \eqref{eq:IsoWF}--\eqref{eq:deflambda}. 
Consequently, the proof of \Cref{lem:LojaAsymStabil} is a straightforward adaptation of \FFF\cite[Lemma 7.9]{RuppVolumePreserving}\EEE, applying our \L ojasiewicz--Simon inequality in \Cref{thm:Loja} and can be safely omitted.
As an important consequence one then finds the following convergence result by following the lines of \cite[Section 5]{CFS09} (see also \cite[Theorem 7.1]{RuppVolumePreserving}), which yields that in the case where $\hat{\Sigma}$ is compact, below the explicit energy threshold no blow-ups or blow-downs may occur.

\begin{thm}\label{thm:compact blowup gives conv}
	Let $\sigma\in (0,1)$, let $f\colon [0,T)\times \Sigma_g\to\R^3$ be a \emph{maximal} $\sigma$-isoperimetric Willmore flow with $\CalW(f_0)<\frac{4\pi}{\sigma}$ and let $\hat{f}\colon\hat{\Sigma}\to\R^3$ be a concentration limit as in \Cref{lem:blowup existence}. If $\hat{\Sigma}$ has a compact component and $H_{\hat{f}}\not\equiv const$, then $\hat{f}$ is a limit under translation. Moreover, the flow exists for all times, i.e.\ $T=\infty$, and, as $t\to\infty$, converges smoothly after reparametrization to a Helfrich immersion $f_\infty$ with $\CalW(f_\infty)=\CalW(\hat{f})$.
\end{thm}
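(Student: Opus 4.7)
Since $\hat\Sigma$ has a compact component, \Cref{lem:KSSI Lemma 4.3} forces $\hat\Sigma$ to be connected, compact and diffeomorphic to $\Sigma_g$; in particular $\hat\Sigma(j)=\hat\Sigma$ and $U_j=\Sigma_g$ for large $j$, so the Langer--type convergence of \Cref{lem:blowup existence} is smooth on all of $\hat\Sigma$. Passing $\CalI(\hat f_j)\equiv\sigma$ to the limit yields $\CalI(\hat f)=\sigma$, and together with the standing assumption $H_{\hat f}\not\equiv const$ this makes $\hat f$ an admissible choice of $f_W$ in \Cref{lem:LojaAsymStabil}. Combining the monotonicity of $\CalW_0$ (\Cref{rem:strictLyapunov}), the scale invariance of $\CalW_0$, and the smooth convergence over the entire compact $\hat\Sigma$, we obtain
\begin{align}
    \lim_{t\nearrow T}\CalW_0(f(t,\cdot))\;=\;\lim_{j\to\infty}\CalW_0(\hat f_j)\;=\;\CalW_0(\hat f),
\end{align}
so that $\CalW_0(f(t,\cdot))\geq\CalW_0(\hat f)$ for every $t\in[0,T)$.

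The plan is to apply \Cref{lem:LojaAsymStabil} to the time--shifted rescaled flow $\tau\mapsto f_{j_0}(\hat c+\tau,\cdot)$, with $f_W\defeq \hat f\circ\phi_{j_0}^{-1}\colon\Sigma_g\to\R^3$ (a Helfrich reparametrization of $\hat f$) and with initial datum $\hat f_{j_0}$. For $j_0$ large the smooth Langer convergence makes $\Norm{\hat f_{j_0}-f_W}{C^{k,\alpha}(\Sigma_g)}$ arbitrarily small, providing hypothesis (i); hypothesis (ii) is automatic from the display above together with the scale invariance of $\CalW_0$, since every trajectory in question is a time shift of a parabolic rescaling of the original $f$. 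The lemma then yields global existence of the shifted rescaled flow and smooth convergence, after reparametrization, to a Helfrich immersion $\tilde f_\infty$ with $\CalW_0(\tilde f_\infty)=\CalW_0(\hat f)$. By uniqueness (\Cref{prop:STE}) and the scaling of \Cref{lem:scaling}, this forces $T=\infty$, and undoing the rescaling gives smooth convergence of $f(t,\cdot)$ to $f_\infty\defeq x_{j_0}+r_{j_0}\tilde f_\infty$ with $\CalW(f_\infty)=\CalW(\hat f)$ via \eqref{eq:WvstildeW}.

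To conclude that $\hat f$ is a limit under translation, I would exploit diameters. Smooth convergence $\hat f_j\to\hat f$ on the compact $\hat\Sigma$ (with $U_j=\Sigma_g$) yields $\diam\hat f_j(\Sigma_g)\to \diam\hat f(\hat\Sigma)\in(0,\infty)$, while the smooth convergence $f(t,\cdot)\to f_\infty$ implies $\diam f(t,\Sigma_g)\to\diam f_\infty(\Sigma_g)\in(0,\infty)$. Combined with the scaling identity $\diam f(t_j+r_j^4\hat c,\Sigma_g)=r_j\,\diam\hat f_j(\Sigma_g)$ from \eqref{eq:blow up flows}, this forces
\begin{align}
    r_j\;\longrightarrow\;\frac{\diam f_\infty(\Sigma_g)}{\diam\hat f(\hat\Sigma)}\;\in\;(0,\infty),
\end{align}
excluding both the blow-up ($r_j\to 0$) and the blow-down ($r_j\to\infty$) scenarios.

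The main obstacle is the consistent handling of the three layers of reparametrization involved --- the Langer diffeomorphisms $\phi_j$ of \Cref{lem:blowup existence}, the parabolic scale-and-translate of \Cref{lem:scaling}, and the $\tilde\Phi(t)$ produced by the constrained \L ojasiewicz--Simon convergence of \Cref{lem:LojaAsymStabil} --- along with the verification that hypothesis (ii) of that lemma genuinely persists along the full shifted rescaled trajectory (and not only near its initial time), which is where the global monotonicity of $\CalW_0$ is used decisively. Once this bookkeeping is performed, the argument proceeds in parallel to \cite[Theorem 7.1]{RuppVolumePreserving} for the volume--constrained case.
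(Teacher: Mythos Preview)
Your proposal is correct and follows essentially the same route as the paper: after invoking \Cref{lem:KSSI Lemma 4.3} to identify $\hat\Sigma\cong\Sigma_g$ and obtain smooth convergence on all of $\Sigma_g$, both you and the paper apply \Cref{lem:LojaAsymStabil} with $f_W=\hat f$ (up to the Langer diffeomorphism $\phi_{j_0}$) to the rescaled flow started near $\hat f_{j_0}$, verify hypothesis~(ii) via monotonicity of $\CalW_0$ and the identity $\lim_{t\nearrow T}\CalW_0(f(t))=\CalW_0(\hat f)$, and then exclude blow-up and blow-down by comparing diameters along the two convergent sequences. The only cosmetic difference is that the paper composes $\phi_{j_0}$ into the flow rather than into $f_W$, and spells out the intermediate times $s_k=r_{j_0}^{-4}(t_k-t_{j_0}+\hat c r_k^4)$ explicitly when matching the two diameter limits.
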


\begin{proof}
		Let $\hat{c}\in (0,1)$, $t_j \nearrow T, (r_j)_{j\in\N}\subset (0,\infty)$ and $(x_j)_{j\in \N} \subset \R^3$ be as in \Cref{lem:blowup existence}. %After passing to a subsequence, we may assume $r_j\to r \in [0,\infty]$ as $j\to\infty$.
		By arguing as in \cite[Lemma 4.3]{KSSI}, we may assume $\hat{\Sigma}=\Sigma_g$ and, by \Cref{lem:blowup existence} (i), we hence have $\hat{f}_j \circ\Phi_j \to\hat{f}$ smoothly on $\Sigma_g$,
	where $\Phi_j \colon\Sigma_g\to\Sigma_g$ are diffeomorphisms. Let $\varepsilon=\varepsilon(\hat{f})>0$ be as in \Cref{lem:LojaAsymStabil}.
	 Then for some fixed $j_0\in \N$ sufficiently large and any $\alpha\in (0,1)$, we may assume $\norm{\hat{f}_{j_0} \circ\Phi_{j_0} - \hat{f}}{C^{4,\alpha}}<\varepsilon$. Moreover, the $\sigma$-isoperimetric Willmore flow
	\begin{align}
		\tilde{f}_{j_0}(t,\cdot)\defeq r_{j_0}^{-1}\left(f(t_{j_0}+r_{j_0}^4t, \cdot)-x_{j_0}\right)\circ \Phi_{j_0}, \quad t\in [0, r_{j_0}^{-4}(T-t_{j_0})),
	\end{align}
	satisfies $\tilde{f}_{j_0}(\hat{c}) =\hat{f}_{j_0} \circ\Phi_{j_0}$. Using \eqref{eq:dtW<=0} and the invariance of the Willmore energy, for any \linebreak $t\in [0, r_{j_0}^{-4}(T-t_{j_0}))$, we find from \Cref{rem:strictLyapunov} (ii)
	\begin{align}
		{\CalW_0}(\tilde{f}_{j_0}(t)) \geq \lim_{s\to r_{j_0}^{-4}(T-t_{j_0})}{\CalW_0}(f(t_{j_0}+r_{j_0}^4s)) = \lim_{s\to T}{\CalW_0}(f(s)) = \lim_{k\to\infty}{\CalW_0}(\hat{f}_k)={\CalW_0}(\hat{f}),
	\end{align}
	where we used the smooth convergence $\hat{f}_k\circ \Phi_k \to \hat{f}$ in the last step. Also note that we have $\CalI(\hat{f}_{j_0}\circ \Phi_{j_0})=\CalI(\hat{f})=\sigma$.
	Thus, by \Cref{lem:LojaAsymStabil} the flow $\tilde{f}_{j_0}$ exists globally and, as $t\to\infty$, converges smoothly after reparametrization by appropriate diffeomorphisms $\tilde\Phi(t)\colon \Sigma_g\to\Sigma_g$ to a Helfrich immersion $f_\infty$ with $\CalW_0(f_\infty)=\CalW_0(\hat{f})$, so $\CalW(f_\infty)=\CalW(\hat{f})$ by \eqref{eq:WvstildeW}. Consequently, $T=\infty$ and for all $t\geq t_{j_0}$ we have
	\begin{align}
		f(t, \Phi_{j_0}\circ \tilde{\Phi}(r_{j_0}^{-4}(t-t_{j_0}))) = r_{j_0} \tilde{f}_{j_0}\left(r_{j_0}^{-4}(t-t_{j_0}), \tilde{\Phi}(r_{j_0}^{-4}(t-t_{j_0}))\right) + x_{j_0} \to r_{j_0} f_{\infty} + x_{j_0}, 
	\end{align}
	as $t\to\infty$ smoothly on $\Sigma_g$. It remains to prove $r\in (0,\infty)$. To that end, we choose times\linebreak $s_k \defeq r_{j_0}^{-4}\left(t_k-t_{j_0}+\hat{c}r_k^4\right)$ for $k\in \N$, such that we find $s_k\to \infty $ as $k\to\infty$, since $t_k\to T=\infty$. We thus obtain
	\begin{align}
		r_{j_0}^{-1}\left(f(t_k+\hat{c}r_k^4, \cdot)-x_{j_0}\right)\circ \Phi_{j_0}\circ \tilde{\Phi}(s_k) = \tilde{f}_{j_0}(s_k)\circ \tilde{\Phi}(s_k)\to f_{\infty}\quad\text{smoothly as }k\to\infty.
	\end{align}
	Consequently, the diameters converge, so
	$d_k\defeq \diam{f(t_k+\hat{c}r_k^4)(\Sigma_g)} \to r_{j_0} \diam f_{\infty}(\Sigma_g)$ as $k\to\infty$,
	whence $\lim_{k\to\infty}d_k\in (0, \infty)$ since $\Sigma_g$ is compact.
	
	 On the other hand, since $\hat{f}_k \circ \Phi_k \to \hat{f}$ smoothly and $r_k\to r\in [0, \infty]$ the limit $$\lim_{k\to\infty}r_k^{-1}d_k=\lim_{k\to\infty}\diam \hat{f}_k=\diam \hat{f}(\Sigma_g)\in (0,\infty)$$ exists, and consequently $r_k\to r\in (0,\infty)$ has to hold. 
\end{proof}

\section{Convergence for spheres}
\label{sec:convergence}
The goal of this section is to prove \Cref{thm:convergence main}. To that end, we want to use the fact that compactness of the concentration limit $\hat{\Sigma}$ yields convergence of the flow by \Cref{thm:compact blowup gives conv}. We first note that the desired compactness follows, if the area along the sequence $\hat{f}_j$ in \Cref{lem:blowup existence} remains bounded.
\begin{lem}\label{lem:bounded area compact}
	Let $\sigma\in (0,1)$, let $f\colon[0,T)\times \Sigma_g\to\R^3$ be a maximal $\sigma$-isoperimetric Willmore flow with $\CalW(f_0)<\frac{4\pi}{\sigma}$ and let $\hat{f}_j$ be as in \Cref{lem:blowup existence}. If $\sup_{j\in \N} \CalA(\hat{f}_j)<\infty$, then $\hat{\Sigma}$ is compact.
\end{lem}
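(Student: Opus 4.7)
My plan is to first upgrade the uniform area bound on the sequence $(\hat{f}_j)$ to finiteness of $\CalA(\hat{f})$, then separately rule out non-compact connected components of $\hat{\Sigma}$ via Simon's monotonicity formula, and finally bound the number of compact components using the Willmore inequality.

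For the first step, let $\phi_j\colon\hat{\Sigma}(j)\to U_j\subset\Sigma_g$ and $u_j\in C^{\infty}(\hat{\Sigma}(j);\R^3)$ be as in \eqref{eq:hat f representation}, so that $\hat{f}_k\circ\phi_k\to\hat{f}$ smoothly on each compact exhausting set $\hat{\Sigma}(j)$. Fix $j\in\N$. Smooth convergence on $\hat{\Sigma}(j)$ in particular gives convergence of the induced area measures, so
\begin{align*}
\mu_{\hat{f}}(\hat{\Sigma}(j)) = \lim_{k\to\infty}\mu_{\hat{f}_k}(\phi_k(\hat{\Sigma}(j))) \leq \sup_{k\in\N}\CalA(\hat{f}_k) < \infty.
\end{align*}
Letting $j\to\infty$ yields $\CalA(\hat{f})\leq \sup_{k\in\N}\CalA(\hat{f}_k)<\infty$.

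Next, suppose for contradiction that some connected component $C\subset\hat{\Sigma}$ is non-compact. Since $C$ is closed in $\hat{\Sigma}$ and $\hat{f}$ is proper, the restriction $\hat{f}|_C$ is also proper; combined with completeness of $C$ this forces $\hat{f}(C)$ to be unbounded in $\R^3$. By \Cref{lem:blowup existence}(ii), $\int_{\hat{\Sigma}}|H|^2\diff\hat{\mu}=4\CalW(\hat{f})<16\pi/\sigma<\infty$, so I may pick points $y_k\in\hat{f}(C)$ with $|y_k|\to\infty$ and $|y_k-y_\ell|\geq 2$ for $k\neq\ell$, making the balls $B_1(y_k)$ pairwise disjoint. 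By disjointness and finiteness of $\int|H|^2\diff\hat{\mu}$, we have $\int_{B_1(y_k)}|H|^2\diff\hat{\mu}\to 0$ as $k\to\infty$. Simon's monotonicity formula (as in \cite{SimonWillmore}) applied to the proper immersion $\hat{f}$ at each image point $y_k$ gives
\begin{align*}
\pi\leq \mu_{\hat{f}}(B_1(y_k))+\frac{1}{16}\int_{B_1(y_k)}|H|^2\diff\hat{\mu},
\end{align*}
so $\mu_{\hat{f}}(B_1(y_k))\geq \pi/2$ for all sufficiently large $k$. Summing over the disjoint balls then yields $\CalA(\hat{f})=\infty$, contradicting the first step.

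Finally, every compact connected component $C$ of $\hat{\Sigma}$ is a closed immersed surface in $\R^3$, and the Willmore inequality \cite{Willmore65} gives $\CalW(\hat{f}|_C)\geq 4\pi$. Since $\CalW(\hat{f})\leq \CalW(f_0)<4\pi/\sigma$ by \Cref{lem:blowup existence}(ii), there are at most $\lfloor 1/\sigma\rfloor$ compact components. Combining this with the previous step, $\hat{\Sigma}$ is a finite disjoint union of compact surfaces, hence compact. The only delicate point is the justification of Simon's monotonicity formula at the stated scale for our proper but possibly non-compact limit immersion, but this is standard.
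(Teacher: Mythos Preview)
Your proof is correct, but takes a genuinely different route from the paper. The paper's argument stays at the level of the approximating sequence: from \Cref{lem:existence tjrjxj}(iii) one knows $\hat{f}_j(\Sigma_g)\cap B_1(0)\neq\emptyset$ for all $j$, and Simon's diameter bound $\diam\hat{f}_j(\Sigma_g)\leq C\sqrt{\CalA(\hat{f}_j)\CalW(\hat{f}_j)}$ (applied to the \emph{compact} surfaces $\hat{f}_j(\Sigma_g)$) combined with the uniform area bound and the energy decay gives $\hat{f}_j(\Sigma_g)\subset\overline{B_R(0)}$ for some fixed $R$. Passing to the limit via the smooth convergence on compact sets yields $\hat{f}(\hat{\Sigma})\subset\overline{B_R(0)}$, and properness of $\hat{f}$ then forces $\hat{\Sigma}$ compact in one stroke---no component count is needed.

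Your argument instead works directly on the limit immersion: you first pass the area bound to $\hat{f}$, then invoke Simon's monotonicity formula on the proper (but a priori non-compact) limit to exclude unbounded components, and finally use the Willmore inequality to bound the number of compact components. This is perfectly sound; the monotonicity formula does extend to proper immersions with locally finite area and finite Willmore energy, so your caveat at the end is justified. The trade-off is that the paper's route is shorter and only ever applies Simon's results to closed surfaces where they are most classical, while your route is more intrinsic to the limit object and avoids appealing to the diameter estimate. Note also that your Step~3 could be replaced by \Cref{lem:KSSI Lemma 4.3}, which already forces $\hat{\Sigma}$ to be connected once a single compact component exists.
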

\begin{proof}
	%By \Cref{thm:compact blowup gives conv}, it suffices to show that $\hat{\Sigma}$ is compact. 
	By \Cref{lem:existence tjrjxj} (iii), we have $\hat{f}_j(\Sigma_g)\cap B_1(0)\neq \emptyset$ for all $j\in \N$, where $\hat{f}_j = f_j(\hat{c}, \cdot)$ with $f_j$ as in \eqref{eq:blow up flows}. We now use the diameter bound \cite[Lemma 1.1]{SimonWillmore} to estimate
	$\diam \hat{f}_j(\Sigma_g) \leq C \sqrt{\CalA(\hat{f}_j) \CalW(\hat{f}_j)}$,
	such that using the assumption, the invariances of the Willmore energy and the energy decay \eqref{eq:dtW<=0}, we find $\sup_{j\in \N}\diam \hat{f}_j(\Sigma_g)<\infty$. Consequently, there exists $R\in(0,\infty)$ such that $\hat{f}_j(\Sigma_g)\subset \overline{B_R(0)}$ for all $j\in \N$. Letting $j\to\infty$ and using \Cref{lem:blowup existence} (i) and the definition of smooth convergence on compact subset of $\R^3$, one then easily deduces $\hat{f}(\hat{\Sigma})\subset \overline{B_{R}(0)}$ and then, since $\hat{f}$ is proper, compactness of $\hat{\Sigma}$.
\end{proof}

We will now use \Cref{lem:bounded area compact} and \Cref{lem:blowup existence} to conclude that if the concentration limit is non-compact, then it is not only a Helfrich, but even a Willmore immersion. In the spherical case, the classification in \cite{Bryant1984} and the inversion strategy from \cite{KSRemovability} will then yield a contradiction. Combined with \Cref{thm:compact blowup gives conv}, this will prove our main result.

\begin{proof}[{Proof of \Cref{thm:convergence main}}]
	Since $\Sigma_g=\S^2$ and $\sigma\in (0,1)$, the existence of a unique, non-extendable $\sigma$-isoperimetric Willmore flow with initial datum $f_0$ follows from \Cref{prop:STE} and \Cref{lem:AleksandrovHopf} (ii). Moreover, by \Cref{rem:strictLyapunov} (i), the Willmore energy strictly decreases unless the flow is stationary, in which case global existence and convergence to a Helfrich immersion follow trivially. Thus, we may assume $\CalW(f_0)<\min\{\frac{4\pi}{\sigma}, 8\pi\}$.
	
	Let $\hat{f}\colon\hat{\Sigma}\to\R^3$ be a concentration limit as in \Cref{lem:blowup existence}.	
	If $\hat{\Sigma}$ is compact, we find $\hat{\Sigma}=\S^2$ by \cite[Lemma 4.3]{KSSI} and long-time existence and convergence follow from \Cref{thm:compact blowup gives conv} and the fact that $H_{\hat{f}}\not\equiv const$ by \Cref{lem:AleksandrovHopf} (ii).
	
	For the sake of contradiction, we assume that $\hat{\Sigma}$ is not compact. Then we may assume $\CalA(\hat{f}_j)\to\infty$ by \Cref{lem:bounded area compact}. Consequently, by \Cref{lem:blowup existence} we find that $\hat{f}\colon\hat{\Sigma}\to\R^3$ is a Willmore immersion with $\CalW(\hat{f})\leq \CalW(f_0) <8\pi$. The rest of the argument is as in \cite[Proof of Theorem 1.2]{RuppVolumePreserving}: Denote by $I$ the inversion in a sphere with radius 1 centered at $x_0\not\in \hat{f}(\hat{\Sigma})$ and let $\bar{\Sigma}\defeq I(\hat{f}(\hat{\Sigma}))\cup \{0\}$. Then $\bar{\Sigma}$ is compact. By \cite[Lemma 5.1]{KSRemovability}, $\bar{\Sigma}$ is a smooth Willmore sphere with $\CalW(\bar{\Sigma})<8\pi$ and hence, using Bryant's classification result \cite{Bryant1984}, has to be a round sphere. 
	% Inverting in a suitable sphere one obtains an embedded compact Willmore sphere $\bar{\Sigma}$ in $\R^3$ which is smooth and satisfies $\CalW(\bar{\Sigma})<8\pi$ by \cite[Lemma 5.1]{KSRemovability} and hence has to be a round sphere by Bryant's classification result \cite{Bryant1984}. 
	Thus, $\hat{f}(\hat{\Sigma})$ has to be either a plane or a sphere. Since $\hat{\Sigma}$ is non-compact by assumption, this yields that $\hat{f}$ has to parametrize a plane, a contradiction to \Cref{lem:blowup existence} (ii).
	
	Now the limit immersion $f_\infty\colon\S^2\to\R^3$ satisfies $\CalW(f_\infty)\leq 8\pi$ and solves \eqref{eq:Helfrich eq} for some $\lambda_1,\lambda_2 \in \R$.	
	 It remains to prove $\lambda_1, \lambda_2\neq 0$. 
	Arguing as in the proof of \Cref{lem:Helfrich vs stationary}, we infer 
	 \begin{align}
	 	2\lambda_1 \CalA(f_\infty) + 3\lambda_2 \CalV(f_\infty)=0.
	 \end{align}
%	 This is a simple scaling argument as in \Cref{rem:CMC}. By \Cref{prop:L^2grads} we have
%	\begin{align}
%		0&=\int \langle \nabla \CalW_0(f_\infty) - \lambda_1 H_{f_\infty} -\lambda_2 \nu_{f_\infty}, f_{\infty}\rangle \diff \mu_{f_\infty} \\
%		&= \left.\frac{\diff}{\diff \alpha}\right\vert_{\alpha=1} \Big( \CalW_0(\alpha f_{\infty}) + \lambda_1 \CalA(\alpha f_\infty) +\lambda_2 \CalV(\alpha f_\infty) \Big) =2\lambda_1 \CalA(f_\infty)+ 3\lambda_2 \CalV(f_\infty),
%	\end{align}
%	where we used the scaling invariance of $\CalW_0$. 
	Now,  $\CalV(f_\infty)\neq 0$ by \Cref{lem:AleksandrovHopf} (i) as $\sigma\in (0,1)$ and also $\CalA(f_\infty) >0$. Consequently, if one of $\lambda_1, \lambda_2$ is zero, then so is the other.	
%		Since $\CalV(f_\infty)\neq 0$ as $\CalI(f_\infty)=\sigma\in (0,1)$, we find that $\lambda_1\lambda_2=0$ implies $\lambda_1=\lambda_2=0$ and 
In this case $f_\infty$ is a Willmore sphere with $\CalW(f_\infty)\leq 8\pi$. By Bryant's result \cite{Bryant1984}, it then has to be a round sphere, so $\CalI(f_\infty)=1$, a contradiction and hence $\lambda_1,\lambda_2\neq 0$.
\end{proof}

\Cref{cor} is an immediate consequence of the previous results.
\begin{proof}[Proof of \Cref{cor}]
	By the assumption on the initial energy, \Cref{lem:blowup existence} yields the existence of a suitable blow-up sequence and a concentration limit $\hat{f}$ with the desired properties. 
	If $\hat{f}$ has constant mean curvature $\hat{H}\equiv c$, using \eqref{eq:AA0H} Equation \eqref{eq:Helfrich eq} reads
	\begin{align}
		\frac{1}{2}\hat{H}^3 - 2 \hat{K}\hat{H} - \lambda_1 \hat{H}-\lambda_2 = 0.
	\end{align}
	If $\hat{\Sigma}$ is compact, we conclude $\hat{H}\equiv c\neq 0$ and hence $\hat{K}$ also has to be constant. But then $\hat{f}$ has to parametrize a round sphere (see for instance \cite[Chapter V.1]{Hopf}), a contradiction to $\CalI(\hat{f})=\sigma\in (0,1)$.
	Therefore, statement (a) follows from \Cref{thm:compact blowup gives conv}. If $\hat{\Sigma}$ is not compact, we may assume $\CalA(\hat{f}_j)\to\infty$ by \Cref{lem:bounded area compact} after passing to a subsequence. In this case, $\hat{f}$ is a Willmore immersion by \Cref{lem:blowup existence} (iv), yielding statement (b).
\end{proof}

\section{An upper bound for \texorpdfstring{$\beta_0$}{beta0}}
\label{sec:beta}
In this section, we will prove an upper bound for the minimal Willmore energy of spheres with isoperimetric ratio $\sigma\in (0,1)$.

\begin{thm}\label{thm:beta0<4pi/sigma}
	For every $\sigma\in(0,1)$ we have $\beta_0(\sigma)<\frac{4\pi}{\sigma}$.
\end{thm}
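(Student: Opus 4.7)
The proof splits at $\sigma = 1/2$. In the easy range $\sigma \in (0, 1/2]$, one has $4\pi/\sigma \geq 8\pi$, and the bound $\beta_g(\sigma) < 8\pi$ from \cite{KusnerMcGrath} (cited in the introduction after \eqref{eq:defbeta}), specialized to $g=0$, yields directly $\beta_0(\sigma) < 8\pi \leq 4\pi/\sigma$ with strict inequality. This disposes of the range $\sigma \leq 1/2$ without any further construction, and justifies why the proof only requires new ideas for $\sigma$ closer to $1$.

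For $\sigma \in (1/2, 1)$ the approach is to exhibit explicit competitor immersions. A natural one-parameter family is given by the prolate spheroids $E_c \subset \R^3$ with semi-axes $(1,1,c)$, $c \geq 1$, viewed as immersions of $\S^2$. The isoperimetric ratio $c \mapsto \CalI(E_c)$ is continuous and strictly decreasing from $\CalI(E_1) = 1$ to $0$ as $c \to \infty$, so its range covers all of $(0,1)$ and for each $\sigma$ singles out a unique $c(\sigma) > 1$. The theorem then reduces to the pointwise claim $\CalW(E_c)\,\CalI(E_c) < 4\pi$ for every $c > 1$.

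For this pointwise claim, both $\CalW(E_c)$ and $\CalI(E_c)$ admit explicit integral representations in terms of $c$ (ultimately elliptic integrals via the standard meridian parametrization). Expanding around the round sphere $c = 1$ with $t = c - 1$, the first-order terms vanish because the sphere is critical for both $\CalW$ and $\CalI$, and a direct computation gives $\CalW(E_c) = 4\pi + \tfrac{32\pi}{15}t^2 + O(t^3)$ and $\CalI(E_c) = 1 - \tfrac{8}{15}t^2 + O(t^3)$, so that the quadratic terms in the product cancel exactly. Pushing the expansion one order further yields a strictly negative cubic coefficient, of the form $\CalW(E_c)\,\CalI(E_c) = 4\pi - \tfrac{256\pi}{945}t^3 + O(t^4)$, which gives the strict inequality for all small $t > 0$ and hence $\beta_0(\sigma) < 4\pi/\sigma$ for $\sigma$ in some left neighborhood of $1$.

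The main obstacle will be extending this strict inequality to all $c > 1$, not just $c$ close to $1$ (equivalently, across the entire interval $\sigma \in (1/2, 1)$). I plan to handle this either by a monotonicity analysis of $c \mapsto \CalW(E_c)\,\CalI(E_c)$ using the elliptic-integral formulas, or by combining the perturbative estimate near $c = 1$ with the large-$c$ asymptotics $\CalW(E_c) \sim \pi^2 c/2$ and $\CalI(E_c) \sim 64/(\pi^3 c)$, which give $\CalW(E_c)\,\CalI(E_c) \to 32/\pi < 4\pi$ and thereby control the opposite tail. If the ellipsoid family turns out not to cover the entire middle range uniformly, one can supplement it with a second explicit competitor (e.g., a smoothed spherocylinder) for the intermediate aspect ratios.
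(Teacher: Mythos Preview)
Your overall strategy---prolate spheroids as competitors, reducing to the pointwise inequality $\CalW(E_c)\,\CalI(E_c)<4\pi$ for all $c>1$---is exactly what the paper does. But your execution has a real gap, and it stems from a misconception: you write that the relevant formulas are ``ultimately elliptic integrals.'' For \emph{prolate} spheroids this is false. Parametrizing by the short semi-axis $a\in(0,1)$ (so your $c=1/a$), the paper computes, in closed elementary form,
\[
\CalW(f_a)=\tfrac{7\pi}{3}+\tfrac{2\pi}{3}a^2+\tfrac{\pi}{a}\,\tfrac{\arcsin\sqrt{1-a^2}}{\sqrt{1-a^2}},\qquad
\CalI(f_a)=\frac{8a}{\bigl(a+\tfrac{\arcsin\sqrt{1-a^2}}{\sqrt{1-a^2}}\bigr)^3},
\]
and then shows $\CalW(f_a)-\tfrac{4\pi}{\CalI(f_a)}=\tfrac{\pi}{6}F(a)$ for an explicit elementary function $F$. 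The entire theorem is thus equivalent to $F(a)<0$ on $(0,1)$, which the paper proves by the substitution $a=\cos x$, two-sided Taylor-polynomial bounds for $\sin$ and $\cos$, and a Sturm-chain count of the roots of the resulting degree-$9$ polynomial. No asymptotic matching, no second competitor, no monotonicity analysis of elliptic integrals is needed.

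Your expansion near $c=1$ and your large-$c$ asymptotics are both correct and consistent with the paper's formulas (in particular the second-order cancellation you observe is genuine), but they leave the intermediate range unproven, and your proposed fixes---``monotonicity analysis,'' ``smoothed spherocylinder''---are not arguments yet. The missing idea is simply that the prolate case is elementary; once you have the explicit $F(a)$, the problem becomes a one-variable real-analytic inequality that can be settled rigorously (if somewhat laboriously) on the whole interval.
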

We remark that this estimate becomes sharp for $\sigma\nearrow 1$ since $\beta_0(1)= 4\pi$. On the other hand for $\sigma\in (0,\frac{1}{2}]$, the statement follows since by \cite[Lemma 1]{Schygulla} we have $\beta_0(\sigma)<8\pi$ for all $\sigma\in (0,1)$.
We will prove \Cref{thm:beta0<4pi/sigma} by comparing energy and isoperimetric ratio of an \emph{ellipsoid}. To that end, for $a\in (0,1]$, we define the half-ellipse
\begin{align}
	c_a(t) \defeq (0,a \cos t, \sin t)^T, \quad t\in [-\frac{\pi}{2}, \frac{\pi}{2}],
\end{align}
in the $y$-$z$-plane in $\R^3$.
By rotating the curve $c_a$ around the $z$-axis we obtain a particular type of ellipsoid, a \emph{prolate spheroid}.  More explicitly, we define
\begin{align}
	f_a(t, \theta) = (a \cos t\cos\theta, a \cos t\sin\theta,\sin t)^T \quad \text{for }t\in [-\frac{\pi}{2}, \frac{\pi}{2}], \theta\in [0,2\pi].
\end{align}

Fortunately, its area, volume  and also its Willmore energy can be explicitly computed without the use of elliptic integrals. 

\begin{lem}\label{lem:fa AVW}
	Let $a\in (0,1)$. Then we have
	\begin{enumerate}[(i)]
		\item $\CalV(f_a)=\frac{4\pi}{3} a^2$;
		\item $\CalA(f_a)=2 \pi a \left(a+\frac{\arcsin\sqrt{1-a^2}}{\sqrt{1-a^2}}\right)$;
		\item $\CalW(f_a)=\frac{7\pi}{3} + \frac{2\pi}{3}a^2 + \frac{\pi}{a} \frac{\arcsin\sqrt{1-a^2}}{\sqrt{1-a^2}}$.
	\end{enumerate} 
\end{lem}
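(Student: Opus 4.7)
The plan is to view $f_a$ as the surface of revolution generated by rotating $c_a$ around the $z$-axis, reducing each of the three quantities to a one-dimensional integral in $t$. Throughout I will write $b \defeq \sqrt{1-a^2}$ and $w(t) \defeq \sqrt{1-b^2\sin^2 t}$.

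Statement (i) is immediate: the image $f_a([-\pi/2,\pi/2]\times[0,2\pi])$ is a prolate spheroid with semi-axes $a, a, 1$, whose enclosed volume is the classical $\tfrac{4\pi}{3}a^2$. Equivalently, a one-line computation from the divergence-theorem formula $\CalV(f) = -\tfrac{1}{3}\int\langle f,\nu\rangle\diff\mu$ gives $\langle f,\nu\rangle\diff\mu = -a^2\cos t\diff t\diff\theta$ (the unit normal induced by the parametrization happens to point inward, which is the convention matching the paper's sign), and integration yields the claim.

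For (ii), the surface-of-revolution area formula together with the substitution $u=\sin t$ gives
\begin{equation*}
\CalA(f_a) = 2\pi\int_{-\pi/2}^{\pi/2} a\cos t\cdot w(t)\diff t = 2\pi a\int_{-1}^{1}\sqrt{1-b^2 u^2}\diff u,
\end{equation*}
and a further $v=bu$ reduces this to $\int\sqrt{1-v^2}\diff v = \tfrac{1}{2}(v\sqrt{1-v^2}+\arcsin v)$. Evaluating at $v=\pm b$ and using $\sqrt{1-b^2}=a$ yields (ii) after collecting terms.

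For (iii) I will compute the first and second fundamental forms of $f_a$ directly in the parametrization $(t,\theta)$. Both are diagonal, and a straightforward calculation produces the principal curvatures, Gaussian curvature and area element
\begin{equation*}
k_1 = \frac{a}{w^3}, \qquad k_2 = \frac{1}{aw}, \qquad K = k_1 k_2 = \frac{1}{w^4}, \qquad \diff\mu = a\cos t\cdot w\diff t\diff\theta.
\end{equation*}
The key structural observation is $H^2 = (k_1-k_2)^2 + 4K$: combined with Gauss--Bonnet $\int K\diff\mu = 4\pi$ this gives $\CalW(f_a) = \tfrac{1}{2}\CalW_0(f_a)+4\pi$ (a special case of \eqref{eq:WvstildeW}). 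Since $k_1-k_2 = -b^2\cos^2 t/(aw^3)$, the substitution $u=\sin t$ reduces $\CalW_0$ to
\begin{equation*}
\CalW_0(f_a) = \frac{1}{2}\int(k_1-k_2)^2\diff\mu = \frac{\pi b^4}{a}\int_{-1}^{1}\frac{(1-u^2)^2}{(1-b^2 u^2)^{5/2}}\diff u.
\end{equation*}

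The main computation, and the place where I expect the most arithmetic bookkeeping, is this last integral. My strategy is the algebraic identity $1-u^2 = b^{-2}\bigl((1-b^2 u^2)-a^2\bigr)$, which on squaring splits the integrand into three elementary pieces of the form $(1-b^2 u^2)^{-k/2}$ for $k \in \{1,3,5\}$, with coefficients $1, -2a^2, a^4$. Under $v=bu$ each of the three becomes classical: the first gives $\arcsin v$, the second $v/\sqrt{1-v^2}$, and the third is obtained from $\int\sec^4\phi\diff\phi = \tan\phi+\tfrac{1}{3}\tan^3\phi$ via $v=\sin\phi$. Evaluating at $v=\pm b$, using $\sqrt{1-b^2}=a$ throughout, and reassembling with the prefactor $\pi b^4/a$ and the $+4\pi$ from Gauss--Bonnet produces the stated formula in (iii) after cancellation. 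I do not expect any conceptual obstacle; the entire lemma reduces to elementary antidifferentiation of rational functions of $\sqrt{1-v^2}$, and the only real risk is the careful bookkeeping of constants when combining the three pieces of $\CalW_0$ with the Euler-characteristic term.
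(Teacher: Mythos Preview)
Your proposal is correct. Parts (i) and (ii) follow the same path as the paper (which simply cites the standard spheroid formulas), and your short derivations check out: in particular $w(t)^2=\cos^2 t+a^2\sin^2 t$, the area element $a\cos t\,w\,\diff t\,\diff\theta$, and the inward normal all agree with the paper's expressions.

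For (iii) you take a genuinely different route. The paper computes the mean curvature directly,
\[
H_{f_a}=\frac{(1+a^2)\cos^2 t+2a^2\sin^2 t}{a(\cos^2 t+a^2\sin^2 t)^{3/2}},
\]
forms $\CalW(f_a)=\tfrac{\pi}{2}\int H_{f_a}^2\,a\cos t\,w\,\diff t$, and then evaluates the resulting integral after the substitution $u=\sin t$. You instead exploit the decomposition $H^2=(k_1-k_2)^2+4K$ together with Gauss--Bonnet (equivalently \eqref{eq:WvstildeW} with $g=0$) to reduce to computing $\CalW_0(f_a)=\tfrac12\int(k_1-k_2)^2\diff\mu$, and your algebraic trick $1-u^2=b^{-2}\bigl((1-b^2u^2)-a^2\bigr)$ neatly splits the remaining integral into the three elementary antiderivatives you list; assembling them gives $\CalW_0(f_a)=\frac{2\pi\arcsin b}{ab}-\frac{10\pi}{3}+\frac{4\pi a^2}{3}$ and hence exactly the stated $\CalW(f_a)$. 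The advantage of your route is that the integrand $(1-u^2)^2/(1-b^2u^2)^{5/2}$ is visibly simpler than the full $H^2$-integrand, and the topological contribution $4\pi$ is separated out from the start; the paper's route is more direct but leaves the full bookkeeping to the reader.
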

\begin{proof}	
	(i) and (ii) are standard formulas, see for instance \cite[Section 4.8]{Zwillinger}. For (iii), we observe that the mean curvature and the surface element of $f_a$ are given by
	\begin{align}
		\diff \mu_{f_a} &= a \cos t \sqrt{a^2\sin^2 t + \cos^2 t}\diff t\diff\theta,\\
		H_{f_a} &= \frac{(1+a^2)\cos^2 t + 2a^2\sin^2 t}{a(\cos^2 t + a^2 \sin^2 t)^{\frac{3}{2}}},
	\end{align}
	by standard formulas for surfaces of revolution, see for instance \cite[Section 3C]{Kuhnel}. In order to compute the Willmore energy, we thus have to evaluate the integral
	\begin{align}
		\CalW(f_a) = \frac{\pi}{2} \int_{-\frac{\pi}{2}}^{\frac{\pi}{2}} \frac{\left((1+a^2)\cos^2 t+2a^2 \sin^2 t\right)^2 \cos t}{a\left(\cos^2 t + a^2 \sin^2 t\right)^{\frac{5}{2}}} \diff t.
	\end{align}
	Substituting $u=\sin t$, this integral can then be explicitly computed yielding (iii).
\end{proof}

\begin{proof}[Proof of \Cref{thm:beta0<4pi/sigma}]
	Clearly, we have $\beta_0(\CalI(f_a))\leq \CalW(f_a)$. Moreover, by \Cref{lem:fa AVW} and a short computation we have
	\begin{align}\label{eq:fa I}
		\CalI(f_a)= \frac{8a}{\left(a+ \frac{\arcsin\sqrt{1-a^2}}{\sqrt{1-a^2}}\right)^3}\quad \text{for all } a\in (0,1).
	\end{align}
	An elementary computation yields $\CalI(f_a)\to 1$ as $a\to 1$ and similarly $\CalI(f_a)\to 0$ as $a\to 0$. Consequently, we have $\{\CalI(f_a)\mid a\in (0,1)\} = (0,1)$ by a continuity argument.
	
	Now, by \eqref{eq:fa I}, we find for all $a\in (0,1)$
	\begin{align}
		\CalW(f_a)-\frac{4\pi}{\CalI(f_a)} = \frac{\pi}{6}\left(14+4a^2+\frac{6 \arcsin\sqrt{1-a^2}}{a\sqrt{1-a^2}}-\frac{3\left(a+\frac{\arcsin\sqrt{1-a^2}}{\sqrt{1-a^2}}\right)^3}{a}\right) = \frac{\pi}{6}F(a),
	\end{align}	
	where the function $F$ is negative for $a\in (0,1)$ by \Cref{lem:F(a)<0} below.
\end{proof}

\begin{lem}\label{lem:F(a)<0}
	The function $F\colon (0,1)\to \R$ defined by
	$$F(a)\defeq 14+4a^2+\frac{6\arcsin\sqrt{1-a^2}}{a\sqrt{1-a^2}}-\frac{3\left(a+\frac{\arcsin\sqrt{1-a^2}}{\sqrt{1-a^2}}\right)^3}{a}$$ satisfies $F(a)<0$ for all $a\in (0,1)$.
\end{lem}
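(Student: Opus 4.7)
The natural starting point is the trigonometric substitution $\phi = \arcsin\sqrt{1-a^2}$, so that $\phi$ ranges over $(0,\pi/2)$ as $a$ ranges over $(0,1)$, with $a = \cos\phi$, $\sqrt{1-a^2}=\sin\phi$, and $\frac{\arcsin\sqrt{1-a^2}}{\sqrt{1-a^2}} = \frac{\phi}{\sin\phi}$. Multiplying the definition of $F(a)$ by the strictly positive quantity $a\sin^3\phi$, one sees that $F(a)<0$ on $(0,1)$ is equivalent to the inequality
\[
h(\phi) \defeq 3\bigl(\sin\phi\cos\phi + \phi\bigr)^3 - 2\cos\phi\,(7+2\cos^2\phi)\sin^3\phi - 6\phi\sin^2\phi > 0
\]
for $\phi \in (0,\pi/2)$. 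Direct substitution at the endpoints gives $h(0)=0$ and $h(\pi/2) = \tfrac{3\pi(\pi^2-8)}{8}>0$ since $\pi^2>8$, so only the interior remains to be handled.

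My next step would be to linearize all trigonometric products using the identities $\sin\phi\cos\phi = \tfrac12\sin(2\phi)$, $\sin^2\phi = \tfrac12(1-\cos 2\phi)$, $\sin^3\phi = \tfrac14(3\sin\phi-\sin 3\phi)$ and $\cos^3\phi\sin^3\phi = \tfrac1{32}(3\sin 2\phi - \sin 6\phi)$. This recasts $h(\phi)$ as an explicit finite linear combination of monomials $\phi^j$ and $\phi^j\cos(m\phi)$, $\phi^j\sin(m\phi)$ for $j\in\{0,1,2,3\}$ and $m\in\{2,4,6\}$, which is both computable by hand and well-suited to Taylor analysis. Expanding at $\phi=0$ and carefully matching orders, one verifies that the coefficients of $\phi^k$ vanish for $k=0,1,\dots,8$, so that
\[
h(\phi) = \frac{64}{315}\phi^9 + O(\phi^{11}), \qquad \phi\to 0^+.
\]
In particular $h>0$ on some interval $(0,\delta)$.

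To cover the remainder of $(0,\pi/2)$ I would combine this local expansion with a global argument, for instance by showing $h'(\phi)>0$ on $(0,\pi/2]$; together with $h(0)=0$ this would yield $h>0$ throughout. The derivative $h'$ itself vanishes to $8$th order at the origin with positive leading term $\tfrac{64}{35}\phi^8$, and on any compact subinterval $[\phi_0,\pi/2]$ its positivity can be certified from the explicit trigonometric form via elementary monotone bounds on $\sin(m\phi),\cos(m\phi)$, or by interval arithmetic if necessary. The genuine difficulty lies in the extreme flatness of $h$ at the origin: since $h$ vanishes to $9$th order with a relatively small coefficient, any estimate near $\phi=0$ must be delicate and cannot afford much slack. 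This is geometrically consistent with the upper bound $\tfrac{4\pi}{\sigma}$ in \Cref{thm:convergence main} becoming sharp as $\sigma\nearrow 1$, i.e.\ as $a\nearrow 1$ or equivalently $\phi\searrow 0$.
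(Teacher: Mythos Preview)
Your substitution $\phi=\arcsin\sqrt{1-a^2}$ and the reduction to the function $h(\phi)$ are exactly what the paper does (it writes $x$ for $\phi$ and works with $G(x)=-h(x)=F(\cos x)\sin^3 x\cos x$). You also correctly identify that the first nonvanishing Taylor coefficient occurs at order~$9$, which is the crucial structural observation and explains why the inequality is so tight near $a=1$.

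The gap is that your argument is never actually carried to completion. You propose to show $h'>0$ on $(0,\pi/2]$ and assert that on a compact subinterval this ``can be certified from the explicit trigonometric form via elementary monotone bounds \dots\ or by interval arithmetic if necessary,'' but you do not perform any such certification. This is precisely the hard part of the lemma: since $h'$ itself vanishes to eighth order at the origin, any bound used must be extremely tight, and elementary monotone estimates on $\sin(m\phi),\cos(m\phi)$ are unlikely to suffice without substantial further work. You also do not verify that $h$ is in fact monotone on the whole interval, so the strategy via $h'>0$ is at this point a hope rather than an argument.

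The paper closes the gap differently. Instead of differentiating, it exploits the alternating-sign Lagrange remainders in the Taylor expansions of $\sin$ and $\cos$ to obtain two-sided polynomial bounds $T_{\cos}^{2n+1}<\cos<T_{\cos}^{2n}$ and $T_{\sin}^{2n+1}<\sin<T_{\sin}^{2n}$ on $(0,\pi/2)$. Inserting the appropriate side of each bound into every monomial of $G$ (according to its sign) produces an explicit polynomial $k(x)$ with $G(x)<k(x)$ on $(0,\pi/2)$. A computation gives $k(x)=\tfrac{x^9}{c}\,p(x^2)$ for a concrete degree-$9$ integer polynomial $p$, so the problem reduces to the purely algebraic statement $p(z)<0$ on $[0,\pi^2/4]$, which is verified by a Sturm-chain calculation. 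This approach is fully rigorous and, because the polynomial bound automatically respects the $x^9$ leading order, it handles the delicate flat region near $\phi=0$ and the rest of the interval in one stroke.
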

\begin{figure}
	\centering
	\includegraphics[width=0.5\textwidth]{Plot.pdf}
	\caption{The function $F(a)$ in \Cref{lem:F(a)<0}.}
	\label{fig:F(a)<0}
\end{figure}
We will prove \Cref{lem:F(a)<0} in \Cref{sec:F(a)<0} below.
A quick glimpse at the plot of $F$ in \Cref{fig:F(a)<0} illustrates that the statement of \Cref{lem:F(a)<0} is true. However, a rigorous proof seems to be surprisingly difficult, since the function combines trigonometric functions with polynomials and its graph becomes very flat near $F(1)=0$.

\appendix
\section{Proof of \texorpdfstring{{\Cref{lem:F(a)<0}}}{Lemma 7.3}}\label{sec:F(a)<0}

This section is devoted to \FFF proving \EEE \Cref{lem:F(a)<0}. The idea is to make a change of variables, such that the problem is equivalently formulated in terms of a polynomial in $x, \cos x$ and $\sin  x$. Then, we use the power series representation of the Cosine and Sine functions, to reduce the problem to the question if a certain polynomial has roots in a given interval. This last point can then be discussed by studying the Sturm chain of the polynomial.

\begin{proof}[Proof of \Cref{lem:F(a)<0}]
	For $x\in (0,\frac{\pi}{2})$ we consider the function $G(x)\defeq F(\cos x) \sin^3 x \cos x$, so that expanding we find
	$$ G(x) = -3 x^3 - 9 x^2 \cos x \sin x + 6 x \sin^2 x - 9 x \cos^2 x \sin^2 x + 
	14 \cos x \sin^3x +\cos^3 x\sin ^3x .$$
	We observe that $F(a)<0$ for all $a\in (0,1)$ is equivalent to $G(x)<0$ for all $x\in (0,\frac{\pi}{2})$. 
	
	Using the power series expansion of the Cosine and Taylor's theorem with the Lagrange form of the remainder, for any $N\in \N$ we infer
	\begin{align}
		\cos x = \sum_{k=0}^{N} \frac{(-1)^k}{(2k)!} x^{2k} + \frac{1}{(2N+1)!} \cos^{(2N+1)}(\xi) x^{2N+1},
	\end{align}
	for some $\xi=\xi(N)\in (0,x)\subset (0,\frac{\pi}{2})$. An induction argument yields $\cos^{(2N+1)}=(-1)^{N+1}\sin$, so the remainder has a sign, depending on the parity of $N$. Hence, denoting $T_{\cos}^N(x)\defeq \sum_{k=0}^{N}\frac{(-1)^k}{(2k)!}x^{2k}$, we infer
	\begin{align}\label{eq:TaylorCos}
		T_{\cos}^{2n+1}(x)< \cos x < T_{\cos}^{2n}(x) \quad \text{for all }x\in (0,\frac{\pi}{2}), n\in \N_0.
	\end{align}
	By similar arguments, defining $T_{\sin}^{N}(x)\defeq \sum_{k=0}^N \frac{(-1)^k}{(2k+1)!}x^{2k+1}$ and using $\sin^{(2N+2)} = (-1)^{N+1}\sin$ we find
	\begin{align}\label{eq:TaylorSin}
		T_{\sin}^{2n+1}(x)<\sin x <T_{\sin}^{2n}(x)\quad \text{for all }x\in (0,\frac{\pi}{2}), n\in \N_0.
	\end{align} 
	We will now use \eqref{eq:TaylorCos} and \eqref{eq:TaylorSin} to estimate $G$. For $x\in (0,\frac{\pi}{2})$, we have
	\begin{align}
		G(x) &< -3x^3 - 9 x^2 T^{3}_{\cos}(x) T^{3}_{\sin}(x) + 
		6 x T^{2}_{\sin}(x)^2 - 9 x T^{3}_{\cos}(x)^2 T^{3}_{\sin}(x)^2 \\
		&\quad + 14 T^{2}_{\cos}(x) T^{2}_{\sin}(x)^3 + 
		T^{2}_{\cos}(x)^3 T^{2}_{\sin}(x)^3\eqdef k(x).
	\end{align} 
	Now, we observe that $k(x)$ is polynomial of degree $27$. Using \texttt{Mathematica}, we find that this can be simplified to
	\begin{align}
		k(x) = \frac{x^9}{5852528640000} q(x),
	\end{align}
	for the degree $18$ polynomial 
	\begin{align}
		q(x)&=-984711168000 + 660770611200 x^2 - 
		209922048000 x^4 + 40156646400 x^6 \\
		&\quad  - 5069859840 x^8 + 
		437184000 x^{10}- 25717120 x^{12} + 994464 x^{14} - 22944 x^{16} + 
		241 x^{18}.
	\end{align}
	By substituting $x^2=z$, in order to prove $G(x)<0$ for $x\in (0,\frac{\pi}{2})$ it thus suffices to show that 
	\begin{align}
		p(z) &=-984711168000 + 660770611200 z - 
		209922048000 z^2 + 40156646400 z^3\\
		&\quad  - 5069859840 z^4 + 
		437184000 z^5- 25717120 z^6 + 994464 z^7 - 22944 z^8 + 
		241z^9 <0
	\end{align}
	for all $z\in (0, \frac{\pi^2}{4})$. To this end, one may compute the Sturm chain of the polynomial $p$ (see \cite[Theorem 8.8.15]{CohnAlgebra} for instance), to find that there exist no real roots of $p$ in the interval $[0,3]\supset[0, \frac{\pi^2}{4}]$. Consequently, since $p(0)<0$, we find $p(z)<0$ for all $z\in [0, \frac{\pi^2}{4}]$, and hence the claim follows.
\end{proof}
	
\section{Higher order evolution}
	\label{sec:Higherorder}
	
%	 An important commutator relation is the identity
%	\begin{align}\label{eq:Nabla*NablaCommutator}
%		(\nabla \nabla^{\ast} - \nabla^{\ast}\nabla)\nabla \psi= A*A*\nabla\psi + A*\nabla A*\psi
%	\end{align}
%	for any $\ell\choose 0$-tensor $\psi$, cf.\ \cite[(2.11)]{KSGF}. Choosing $\psi=\nabla^{m-1}A$ we find 
%	\begin{align}
%		\nabla\Delta\nabla^mA - \Delta\nabla^{m+1}A= P^{m+1}_3(A),
%	\end{align}
%	for all $m\in \N$. Moreover, Simon's identity yields (cf.\ \todo{Ref})
%	\begin{align}\label{eq:Simons'Identity}
%		\Delta A = \nabla^2 H + 2 K A^0 = \nabla^2 H + A*A*A.
%	\end{align}
	In this section, we will prove a higher order version of \Cref{prop:curvatureIntegralsEstimate}. %, which generalize the estimates in \cite[3.3 and Proposition 4.5]{KSGF} to the. 
	To this end, we follow \cite{KSGF,KSSI} and denote by $\phi*\psi$ any multilinear form, depending on $\phi$ and $\psi$ in a universal bilinear way, where $\phi, \psi$ are tensors on on $\Sigma_g$. In particular,
	we have $\abs{\phi*\psi}\leq C\abs{\phi}\abs{\psi}$ for a universal constant $C>0$ and $\nabla(\phi*\psi)=\nabla\phi*\psi+\phi*\nabla\psi$. 
	Moreover, for $m\in \N_0$ and $r\in \N, r\geq 2$ we denote by $P^m_r(A)$ any term of the type
	\begin{align}
		P^m_r(A) = \sum_{i_1+\dots+i_r=m} \nabla^{i_1}A*\dots*\nabla^{i_r}A.
	\end{align}
	In addition, for $r=1$ we extend this definition by denoting by $P^m_1(A)$ any contraction of $\nabla^mA$ with respect to the metric $g$.
	
	With this notation, we observe that along an isoperimetric Willmore flow the covariant derivatives of the second fundamental form $A$ also satisfy a $4$-th order evolution equation. 
	\begin{lem}\label{lem:higherorder evol}
		Let $\sigma\in (0,1)$ and let $f\colon[0,T)\times \Sigma_g\to\R^3$ be a $\sigma$-isoperimetric Willmore flow. \FFF Then \EEE for all $m\in \N_0$ we have
		\begin{align}
			&\partial_t (\nabla^m A)+ \Delta^2 (\nabla^m A) = P^{m+2}_3(A)+P^m_5(A) + \frac{\lambda}{\CalA(f)}( P^{m+2}_1(A)+ P_3^m(A)) + \frac{\lambda}{\CalV(f)} P_2^m(A).
		\end{align}	
	\end{lem}
	\begin{proof}
		We observe $\partial_t f = \xi\nu$ with 
		\begin{align}
			\xi= -\Delta H + P_3^0(A)+ \frac{\lambda}{\CalA(f)} P_1^0(A)-\frac{2\lambda}{\CalV(f)}.\label{eq:dt f star}
		\end{align}
		For $m=0$, we thus find by \eqref{eq:dtA}
		\begin{align}
			\partial_t A &= \nabla^2\xi + A*A*\xi = -\Delta^2 A + P_3^2(A)+ P_5^0(A)+ \frac{\lambda}{\CalA(f)}\left(P^2_1(A)+P^0_3(A)\right)+\frac{\lambda}{\CalV(f)}P_2^0(A),
		\end{align}
		where we used $\nabla^2\Delta H  = \Delta^2 A +P_3^2(A)$ as a consequence of Simons' identity \cite{Simons}. Assume the statement is true for $m\geq 1$. By \cite[Lemma 2.3]{KSGF} with $\phi=\nabla^m A$ and the fact that we are in codimension one, we find
		\begin{align}
			&\partial_t \nabla^{m+1}A + \Delta^2 \nabla^{m+1}A \\
			&\quad = \nabla\left(P^{m+2}_3(A)+P^m_5(A) + \frac{\lambda}{\CalA(f)}( P^{m+2}_1(A)+ P_3^m(A)) + \frac{\lambda}{\CalV(f)} P_2^m(A)\right) \\
			&\qquad + \sum_{i+j+k=3} \nabla^i A *\nabla^j A*\nabla^{k+m} A + A*\nabla\xi *\nabla^m A+ \nabla A*\xi*\nabla^m A\\
			&\quad = P^{m+3}_3(A)+P^{m+1}_5(A) + \frac{\lambda}{\CalA(f)}( P^{m+3}_1(A)+ P_3^{m+1}(A)) + \frac{\lambda}{\CalV(f)} P_2^{m+1}(A),
		\end{align}
		using \eqref{eq:dt f star} in the last step.
	\end{proof}
	With similar computations as above, one finds the following 
	\begin{lem}\label{lem:H higher order evolution}
		Let $\sigma\in (0,1)$ and let $f\colon[0,T)\times \Sigma_g\to\R^3$ be a $\sigma$-isoperimetric Willmore flow. The for all $m\in \N_0$ we have
		\begin{align}
			&\partial_t (\nabla^m H)+ \Delta^2 (\nabla^m H)= P^{m+2}_3(A)+P^m_5(A) + \frac{\lambda}{\CalA(f)}( P^{m+2}_1(A)+ P_3^m(A)) + \frac{\lambda}{\CalV(f)} P_2^m(A).
		\end{align}	
	\end{lem}
	
	We can now prove the following higher order analogue of \Cref{prop:curvatureIntegralsEstimate}.
\begin{prop}\label{prop:higher order estimates}
	Let $\sigma\in (0,1)$, let $f\colon[0,T)\times \Sigma_g\to\R^3$ be a $\sigma$-isoperimetric Willmore flow  and let $\gamma$ be as in \eqref{eq:gamma}. Then for all $m\in \N_0, s\geq 2m+4$ and $\phi=\nabla^m A$ we have
	\begin{align}
		&\frac{\diff}{\diff t} \int\abs{\phi}^2\gamma^s\diff \mu + \frac{1}{2} \int\abs{\nabla^2\phi}^2\gamma^s\diff \mu\\
		&\quad \leq C\left(\frac{\lambda^2}{\CalA(f)^2} + 	\frac{\abs{\lambda}^{\frac{4}{3}}}{\abs{\CalV(f)}^{\frac{4}{3}}}+ \norm{A}{L^{\infty}([\gamma>0])}^4\right) \int\abs{\phi}^2\gamma^s\diff \mu  \\
		&\qquad + C\left(1+\frac{\lambda^2}{\CalA(f)^2} + 	\frac{\abs{\lambda}^{\frac{4}{3}}}{\abs{\CalV(f)}^{\frac{4}{3}}}+ \norm{A}{L^{\infty}([\gamma>0])}^4\right)\int_{[\gamma>0]}\abs{A}^2\diff \mu,
	\end{align}
	where $C=C(s,m,\Lambda)>0$.
\end{prop}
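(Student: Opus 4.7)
The plan is to differentiate $\int |\phi|^2 \gamma^s \diff \mu$ in time, substitute the fourth-order evolution equation from \Cref{lem:higherorder evol}, integrate by parts once to convert the leading term into $\int |\nabla^2 \phi|^2 \gamma^s \diff \mu$ on the good side, and then absorb every remaining term via interpolation and Young's inequality with exponents tuned to produce the specific powers $\lambda^2/\CalA(f)^2$, $|\lambda|^{4/3}/|\CalV(f)|^{4/3}$, and $\|A\|_{L^\infty([\gamma>0])}^4$ appearing on the right-hand side. This follows the pattern of \cite[Theorem 3.5]{KSSI} and \cite[Proposition A.1]{RuppVolumePreserving}, with the new wrinkle being the two types of Lagrange-multiplier corrections.

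First I would compute
\begin{align}
\frac{\diff}{\diff t}\int |\phi|^2 \gamma^s \diff \mu = 2 \int \langle \phi, \partial_t \phi\rangle \gamma^s \diff \mu + \int |\phi|^2 \partial_t(\gamma^s \diff \mu),
\end{align}
and insert the evolution equation $\partial_t \phi = -\Delta^2 \phi + P^{m+2}_3(A) + P^m_5(A) + \frac{\lambda}{\CalA(f)}(P^{m+2}_1(A)+P^m_3(A)) + \frac{\lambda}{\CalV(f)}P^m_2(A)$ from \Cref{lem:higherorder evol}. Standard integration by parts (cf.\ \cite[Lemma 4.2]{KSSI}) gives
\begin{align}
-2\int \langle \phi, \Delta^2 \phi\rangle \gamma^s \diff \mu = -2\int |\nabla^2 \phi|^2 \gamma^s \diff \mu + \int P^2_1(\nabla\gamma,\nabla^2\gamma) \ast \phi \ast \nabla^j\phi\,\diff\mu,
\end{align}
where the boundary-type terms carry at most two factors of $\nabla\gamma, \nabla^2\gamma$ (hence at most $\Lambda^2$), and can be estimated via weighted Young's inequality to yield $\varepsilon \int |\nabla^2\phi|^2\gamma^s\diff\mu + C(\varepsilon,s,\Lambda)\int_{[\gamma>0]} |\phi|^2 \diff\mu$. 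Since $|\phi|^2 \leq C(m,\|A\|_{L^\infty([\gamma>0])})\,|A|^2$ after one crude interpolation step (using $s \geq 2m+4$ to have enough cut-off powers to spare), these terms sit inside the final right-hand side.

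Next I would estimate the polynomial terms $P^{m+2}_3(A)$ and $P^m_5(A)$ using the interpolation machinery of \cite[Proposition 2.6]{KSSI} (cf.\ also \cite[Appendix D]{RuppVolumePreserving}), noting that here we can pull out the global $L^\infty$-bound on $A$ on $[\gamma>0]$: each term of type $\nabla^{i_1}A\ast\cdots\ast\nabla^{i_r}A\ast\phi$ with $i_1+\cdots+i_r = m+2$ or $m$ (and $r=3$ or $5$) can be bounded pointwise by $\|A\|^{r-1}_{L^\infty([\gamma>0])}|\nabla^{j}A||\phi|$ for some $j\leq m+2$, after distributing derivatives, and then iterated $\varepsilon$-Young combined with the integrated interpolation inequality produces contributions of type $\varepsilon \int|\nabla^2\phi|^2\gamma^s \diff\mu + C\|A\|_{L^\infty([\gamma>0])}^4\int|\phi|^2\gamma^s\diff\mu + C\|A\|_{L^\infty([\gamma>0])}^4\int_{[\gamma>0]}|A|^2\diff\mu$.

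The heart of the argument, and what I expect to be the main obstacle, lies in the two Lagrange-multiplier terms. For the $\frac{\lambda}{\CalA(f)}P^{m+2}_1(A)$ contribution we pair one factor of $\nabla^2\phi$ against $\phi$ via Young's inequality:
\begin{align}
\frac{|\lambda|}{\CalA(f)}\int |\phi||\nabla^2 A\ast\nabla^{m}A|\gamma^s\diff\mu \leq \varepsilon \int |\nabla^2\phi|^2 \gamma^s\diff\mu + C(\varepsilon)\frac{\lambda^2}{\CalA(f)^2}\int |\phi|^2 \gamma^s\diff\mu,
\end{align}
which is precisely the scaling-invariant quantity appearing in the conclusion, cf.\ \Cref{lem:scaling}. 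For the $\frac{\lambda}{\CalA(f)}P^m_3(A)$ part the three $A$-factors soak up two powers of $\|A\|_{L^\infty([\gamma>0])}$, leaving a standard $\frac{|\lambda|}{\CalA(f)}|\phi|^2$-type term that splits via Young into $\frac{\lambda^2}{\CalA(f)^2}|\phi|^2 + \|A\|_{L^\infty([\gamma>0])}^4|\phi|^2$. Finally, the $\frac{\lambda}{\CalV(f)} P^m_2(A)$ term is treated with Young's inequality at exponents $(4,4/3)$, exactly as in \eqref{eq:dt A^2 local 4}, to produce $\varepsilon\int|\nabla^2\phi|^2\gamma^s\diff\mu + C\frac{|\lambda|^{4/3}}{|\CalV(f)|^{4/3}}\int|\phi|^2\gamma^s\diff\mu + C\|A\|_{L^\infty([\gamma>0])}^4\int_{[\gamma>0]}|A|^2\diff\mu$. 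The $\partial_t(\gamma^s\diff\mu)$ term is handled as in the proof of \Cref{lem:curvatureIntegralsEstimate1}, bounding $|\partial_t\gamma^s|\leq C\Lambda\gamma^{s-1}(|\nabla\CalW_0(f)|+\frac{|\lambda|}{\CalA(f)}|A|+\frac{|\lambda|}{|\CalV(f)|})$; the $\nabla\CalW_0(f)$ contribution is absorbed into $\int|\nabla^2\phi|^2\gamma^s\diff\mu$ via the interpolation of \cite[Proposition 2.6]{KSSI}, and the Lagrange terms are handled by the same Young exponents as before. Summing all contributions and choosing $\varepsilon>0$ small enough so that $2-\sum\varepsilon > \frac{1}{2}$ yields the claimed bound. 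The delicate bookkeeping of the derivative count in each $P^j_r(A)$ term, and in particular ensuring that $s\geq 2m+4$ suffices to absorb all $\nabla\gamma$-factors without accumulating negative powers, is the technical obstacle, but this is essentially tracked by the same induction on $m$ used in \cite[Appendix A]{RuppVolumePreserving}.
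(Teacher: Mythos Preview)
Your overall strategy matches the paper's: differentiate, substitute the evolution equation from \Cref{lem:higherorder evol}, integrate by parts to produce the good $\int|\nabla^2\phi|^2\gamma^s\diff\mu$ term, and then absorb all remainders via Young's inequality with exponents $(2,2)$ for the $\lambda/\CalA$ contribution and $(4,4/3)$ for the $\lambda/\CalV$ contribution. The paper packages the initial integration-by-parts step by quoting \cite[Lemma~3.2]{KSGF} directly (recorded here as \Cref{lem:KSGFL3.2}), whereas you unpack it by hand; this is equivalent.

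There is, however, a genuine gap in two of your intermediate estimates. First, the claim ``$|\phi|^2 \leq C(m,\|A\|_{L^\infty([\gamma>0])})\,|A|^2$ after one crude interpolation step'' is false pointwise (for $\phi=\nabla^m A$, $m\geq 1$), and even the integrated version $\int_{[\gamma>0]}|\phi|^2\diff\mu \leq C(m,\|A\|_{L^\infty})\int_{[\gamma>0]}|A|^2\diff\mu$ fails without an absorbing term on the left. What actually works is the \emph{weighted} interpolation inequality \cite[Corollary~5.3]{KSGF}, which gives
\[
\int|\phi|^2\gamma^{s-4}\diff\mu + \int|\nabla\phi|^2\gamma^{s-2}\diff\mu \leq \varepsilon\int|\nabla^2\phi|^2\gamma^s\diff\mu + C_\varepsilon\int_{[\gamma>0]}|A|^2\diff\mu,
\]
with no $\|A\|_{L^\infty}$ dependence in the constant; this is precisely why the hypothesis $s\geq 2m+4$ is needed, and this is how the paper handles the boundary-type terms. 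Second, your treatment of $P^{m+2}_3(A)$ and $P^m_5(A)$ via the pointwise bound ``$\|A\|^{r-1}_{L^\infty}|\nabla^j A||\phi|$ after distributing derivatives'' is incorrect as stated: in a generic summand $\nabla^{i_1}A\ast\cdots\ast\nabla^{i_r}A$ none of the indices $i_k$ need vanish, so you cannot extract $r-1$ undifferentiated factors pointwise. The correct tool is again an integrated interpolation inequality, namely \cite[Corollary~5.5]{KSGF} and \cite[(4.15)]{KSGF}, which the paper invokes explicitly. Once you replace these two pointwise claims by the proper integrated interpolation inequalities from \cite{KSGF}, your argument goes through and coincides with the paper's.
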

In order to prove \Cref{prop:higher order estimates}, we first recall the following
	\begin{lem}[{\cite[Lemma 3.2]{KSGF}}]\label{lem:KSGFL3.2}
		Let $f\colon [0,T)\times \Sigma_g \to\R^3$ be a normal variation, $\partial_t f = \xi \nu$. Let $\phi$ be a $\ell \choose 0$-tensor satisfying $\partial_t \phi + \Delta^2 \phi = Y$. Then for any $\gamma\in C^2([0,T)\times \Sigma_g)$ and $s\geq 4$ we have
		\begin{align} \label{eq:X}
			&\frac{\diff}{\diff t} \int |\phi|^2\gamma^s\diff \mu + \int \abs{\nabla^2 \phi}^2\gamma^s\diff \mu  \\
			&\leq  2 \int\langle Y, \phi\rangle\gamma^{s}\diff \mu+\int A * \phi * \phi* \xi\gamma^s \diff \mu + \int |\phi|^2 s\gamma^{s-1}\partial_t \gamma \diff \mu  \\
			&\quad + C\int \abs{\phi}^2\gamma^{s-4} \left(\abs{\nabla\gamma}^4 + \gamma^2\abs{\nabla^2\gamma}^2\right)\diff\mu + C\int \abs{\phi}^2 \left(\abs{\nabla A}^2+\abs{A}^4\right)\gamma^s\diff \mu,
		\end{align}
		where $C=C(s)$.
	\end{lem}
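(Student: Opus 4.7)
The proof proceeds in two logically separable steps: differentiating the localized $L^2$-norm of $\phi$ in time to reduce to an elliptic identity, and then performing integration by parts on the $\Delta^2\phi$ term. First, along the normal variation $\partial_t f = \xi\nu$, the metric satisfies $\partial_t g_{ij} = -2A_{ij}\xi$, hence $\partial_t g^{ij} = 2A^{ij}\xi$, and the area element evolves as $\partial_t(\diff\mu) = -H\xi\diff\mu$ by \Cref{lem:geometricEvolutions}. Writing $|\phi|^2 = g^{i_1 j_1}\cdots g^{i_\ell j_\ell}\phi_{i_1\cdots i_\ell}\phi_{j_1\cdots j_\ell}$, differentiating, and collecting the contributions from $\partial_t g^{ij}$ and $\partial_t(\diff\mu)$ (both of which involve one factor of $A$ contracted with $\xi$) into a single $A*\phi*\phi*\xi\gamma^s$-type term gives
$$\frac{\diff}{\diff t}\int|\phi|^2\gamma^s\diff\mu = 2\int\langle\phi,\partial_t\phi\rangle\gamma^s\diff\mu + \int A*\phi*\phi*\xi\,\gamma^s\diff\mu + s\int|\phi|^2\gamma^{s-1}\partial_t\gamma\diff\mu.$$
Substituting $\partial_t\phi = Y - \Delta^2\phi$ isolates the claimed $Y$-term and the geometric $*$-terms, leaving the elliptic quantity $2\int\langle\Delta^2\phi,\phi\rangle\gamma^s\diff\mu$ on the left.

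Second, I would handle $\int\langle\Delta^2\phi,\phi\rangle\gamma^s\diff\mu$ by integration by parts. Using the compact support of $\gamma$ and self-adjointness,
$$\int\langle\Delta^2\phi,\phi\rangle\gamma^s\diff\mu = \int\langle\Delta\phi,\Delta(\phi\gamma^s)\rangle\diff\mu = \int|\Delta\phi|^2\gamma^s\diff\mu + 2\int\langle\Delta\phi,\nabla\phi\rangle\cdot\nabla\gamma^s\diff\mu + \int\langle\Delta\phi,\phi\rangle\Delta\gamma^s\diff\mu.$$
The standard Bochner/Weitzenböck identity on the $2$-surface converts $\int|\Delta\phi|^2\gamma^s$ into $\int|\nabla^2\phi|^2\gamma^s$, modulo commutator terms. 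The latter are controlled via the Ricci identity, whose curvature tensor on $\Sigma_g$ is determined by $|A|^2$ through the Gauss equation; these contribute $\int|A|^2|\phi|^2\gamma^s\diff\mu$, which is in the final absorbable form $\int|\phi|^2(|\nabla A|^2+|A|^4)\gamma^s$ after Young's inequality.

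The remaining cross terms are absorbed through Young's inequality: with a small parameter $\varepsilon>0$,
$$\Big|\int\langle\Delta\phi,\nabla\phi\rangle\cdot\nabla\gamma^s\diff\mu\Big| \le \varepsilon\int|\nabla^2\phi|^2\gamma^s\diff\mu + C\int|\nabla\phi|^2\gamma^{s-2}|\nabla\gamma|^2\diff\mu,$$
and similarly for the $\phi\cdot\Delta\gamma^s$ contribution. The $|\nabla\phi|^2$ term is then handled by a localized interpolation (integrating by parts against $\phi\gamma^{s-2}|\nabla\gamma|^2$), producing further $|\nabla^2\phi|^2\gamma^s$ plus the desired $|\phi|^2\gamma^{s-4}(|\nabla\gamma|^4+\gamma^2|\nabla^2\gamma|^2)$ structure. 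The assumption $s\ge 4$ is used precisely here to keep all powers of $\gamma$ non-negative after the repeated product-rule differentiations of $\gamma^s$. Choosing $\varepsilon$ sufficiently small absorbs half of $\int|\nabla^2\phi|^2\gamma^s$ into the principal term, yielding the stated inequality with constant $C=C(s)$.

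The main technical obstacle is the bookkeeping of the commutators in the second step: for a tensor of general rank $\ell$, each application of the Ricci identity produces distinct contractions with the curvature, and these must be recognized as terms of the form $|A|^2 * \phi * \phi$ (absorbable as $|\phi|^2|A|^4\gamma^s$) or $\nabla A * \phi * \nabla\phi$ (absorbable after Young into $|\nabla A|^2|\phi|^2\gamma^s + |\nabla\phi|^2\gamma^s$, the latter handled by the interpolation above). The bilinear structure of the $*$-notation makes this largely mechanical, but one must verify that no higher-order derivative of $A$ is produced, which is guaranteed because only second derivatives of $\phi$ (not $A$) appear in $\Delta^2\phi$ and the Gauss equation only involves $A$ algebraically.
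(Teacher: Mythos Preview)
The paper does not prove this lemma; it is quoted verbatim from \cite[Lemma~3.2]{KSGF} and invoked as a black box in the proof of \Cref{prop:higher order estimates}. So there is no in-paper argument to compare against.

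Your sketch is the standard one and is essentially what Kuwert--Sch\"atzle do. The two-step structure (time-differentiate, then integrate $\Delta^2\phi$ by parts and interchange derivatives via the Ricci identity) is exactly right, as is the observation that $s\ge 4$ is needed to keep the powers of $\gamma$ nonnegative after two rounds of product rule on $\gamma^s$. One small imprecision: the commutator terms from swapping $\nabla_i\nabla_j$ on $\nabla\phi$ produce contributions of the schematic form $A*A*\nabla\phi*\nabla\phi$ (not only $|A|^2\,\phi*\phi$), and these need an additional integration by parts against $\phi$ before Young's inequality puts them in the final form $\int|\phi|^2(|\nabla A|^2+|A|^4)\gamma^s + \varepsilon\int|\nabla^2\phi|^2\gamma^s + C\int|\phi|^2\gamma^{s-4}(|\nabla\gamma|^4+\gamma^2|\nabla^2\gamma|^2)$. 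This is routine and you allude to it in your last paragraph, but it is worth making explicit since it is where the $|A|^4$ and $|\nabla A|^2$ factors actually enter.
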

	%\begin{lem}\label{lem:EvolutionOfNabla^mAIntegrals}
	%	Let $f\colon [0,T)\times \Sigma\to\R^3$ be a volume-preserving Willmore flow and $\gamma$ as in \eqref{eq:gamma}. Then for $\phi=\nabla^m A, m\in \N_0$ and $s\geq 2m+4$ we have
	%	\begin{align}
	%		&\frac{\diff}{\diff t} \int |\phi|^2\gamma^s\diff \mu + \frac{1}{2} \int \abs{\nabla^2 \phi}\gamma^s\diff \mu \\
	%		&\leq  C\left( \abs{\lambda}^{\frac{4}{3}} + \abs{\lambda}\norm{A}{L^{\infty}([\gamma>0])} + \norm{A}{L^{\infty}([\gamma>0])}^4\right) \int\abs{\phi}^2\gamma^s\diff \mu \\
	%		&\quad  + C\left(1+ \abs{\lambda}\norm{A}{L^{\infty}([\gamma>0])} + \norm{A}{L^{\infty}([\gamma>0])}^4\right)\int_{[\gamma>0]}\abs{A}^2\diff\mu
	%	\end{align}
	%	where $C=C(s,m, \Lambda)>0$.
	%\end{lem}
	\begin{proof}[{Proof of \Cref{prop:higher order estimates}}]
		In the following, note that the value of $C=C(s,m,\Lambda)$ is allowed to change from line to line. We apply \Cref{lem:KSGFL3.2}  with $Y=\partial_t \phi +\Delta^2 \phi$, $\xi= P^2_1(A)+P_3^0(A)+\frac{\lambda}{\CalA(f)}P_1^0(A)-\frac{2\lambda}{\CalV(f)}$ by \eqref{eq:dt f star} and estimate the terms on the right hand side. Using $\phi= \nabla^m A$ and \Cref{lem:higherorder evol}, we thus have
		%		By \Cref{lem:KSGFL3.2} we find 
%		\begin{align}
%			&\frac{\diff}{\diff t} \int |\phi|^2\gamma^s\diff \mu + \int \abs{\nabla^2 \phi}^2\gamma^s\diff \mu \\
%			&\leq   2 \int\langle Y, \phi\rangle\gamma^{s}\diff \mu + \int  A * \phi * \phi* \xi\gamma^s \diff \mu + \int |\phi|^2 s\gamma^{s-1}\partial_t \gamma \diff \mu \\
%			&\quad + C\int \abs{\phi}^2\gamma^{s-4} \left(\abs{\nabla\gamma}^4 + \gamma^2\abs{\nabla^2\gamma}^2\right)\diff\mu + C\int \abs{\phi}^2 \left(\abs{\nabla A}^2+\abs{A}^4\right)\gamma^s\diff \mu,\label{eq:nabla^mApartial_tgamma0}
%		\end{align}
%		\todo{Nur Wiederholung- weglassen!}
%		where $\partial_t \phi +\Delta^2 \phi = Y$ and $\xi= P^2_1(A)+P_3^0(A)+\frac{\lambda}{\CalA(f)}P_1^0(A)-\frac{2\lambda}{\CalV(f)}$ by \eqref{eq:dt f star}. 
%Since $\phi=\nabla^m A$, by \Cref{lem:higherorder evol} we thus have
		\begin{align}
			&2\int \langle Y, \phi\rangle\gamma^s\diff \mu +\int A*\phi*\phi*\xi\gamma^s \diff \mu + C\int|\phi|^2(\abs{\nabla A}^2+\abs{A}^4)\gamma^s\diff\mu \\
			&\quad = \int \left(P^{m+2}_3(A)+P^m_5(A)\right)*\phi\gamma^s\diff \mu \\
			&\quad + \frac{\lambda}{\CalA(f)} \int \left(P_1^{m+2}(A)+P^m_3(A)\right)*\phi\gamma^s\diff\mu + \frac{\lambda}{\CalV(f)}\int P_2^m(A)*\phi\gamma^s\diff \mu.\label{eq:nabla^mApartial ohne dtgamma}
		\end{align}
		Moreover, by \eqref{eq:gamma} we find
		\begin{align}\label{eq:nabla^mApartial_tgamma}
			\int |\phi|^2\gamma^{s-1}\partial_t \gamma\diff \mu &= \int \abs{\phi}^2\gamma^{s-1}\langle D\tilde{\gamma}\circ f, \nu\rangle \left(-{\Delta H} - |A^0|^2 H +  \frac{3\lambda}{\CalA(f)}H-\frac{2\lambda}{\CalV(f)}\right)\diff \mu.
		\end{align}
		{We proceed by estimating all the terms involving $\lambda$ in \eqref{eq:nabla^mApartial ohne dtgamma} and \eqref{eq:nabla^mApartial_tgamma}. For the first $\lambda$-term in \eqref{eq:nabla^mApartial ohne dtgamma}, since $\abs{P_1^{m+2}(A)}\leq C \abs{\nabla^2 \phi}$ 
			we find for every $\varepsilon>0$
			\begin{align}
				\frac{\lambda}{\CalA(f)} \int P^{m+2}_1(A)*\phi\gamma^s\diff \mu \leq \varepsilon\int\abs{\nabla^2\phi}^2\gamma^s\diff \mu + C(\varepsilon) \frac{\lambda^2}{\CalA(f)^2}\int\abs{\phi}^2\gamma^s\diff \mu.
			\end{align}
			For the second term, we use \cite[Corollary 5.5]{KSGF} with $k=m$, $r=4$ to obtain
			\begin{align}
				\frac{\lambda}{\CalA(f)} \int P_3^m(A)*\phi \gamma^s \diff \mu \leq C\Abs{\frac{\lambda}{\CalA(f)}}\norm{A}{L^{\infty}([\gamma>0])}^2\left(\int\abs{\phi}^2\gamma^s\diff\mu+ \int_{[\gamma>0]}\abs{A^2}\diff \mu\right).
			\end{align} 
			The last $\lambda$-term in \eqref{eq:nabla^mApartial ohne dtgamma} can be estimated by \cite[Corollary 5.5]{KSGF} with $k=m$ and $r=3$, yielding
			\begin{align}
				\frac{\lambda}{\CalV(f)} \int P_2^m(A)*\phi\gamma^s\diff \mu \leq C \Abs{\frac{\lambda}{\CalV(f)}}\norm{A}{L^{\infty}([\gamma>0])}\left(\int\abs{\phi}^2\gamma^s\diff\mu+\int_{[\gamma>0]}\abs{A}^2\diff\mu\right).
			\end{align}
			Now for the first $\lambda$-term in \eqref{eq:nabla^mApartial_tgamma}, we use Young's inequality twice to obtain
			\begin{align}
				&\frac{\lambda}{\CalA(f)}\int \abs{\phi}^2\gamma^{s-1}\langle D\tilde{\gamma}\circ f, \nu\rangle  H\diff \mu \\
				&\quad \leq C \frac{\lambda^2}{\CalA(f)^2}\int
				\abs{\phi}^2\gamma^s\diff\mu + C \int\abs{\phi}^2\abs{A}^2\gamma^{s-2}\diff \mu \\
				&\quad \leq C \frac{\lambda^2}{\CalA(f)^2}\int
				\abs{\phi}^2\gamma^s\diff\mu + C\norm{A}{L^{\infty}([\gamma>0]))}^4\int\abs{\phi}^2\gamma^s\diff\mu+ C \int\abs{\phi}^2\gamma^{s-4}\diff \mu.
			\end{align}
			For the second $\lambda$-term in \eqref{eq:nabla^mApartial_tgamma}, we can use Young's inequality with $p=\frac{4}{3}$ and $q=4$ to estimate
			\begin{align}
				 \frac{\lambda}{\CalV(f)}\int \abs{\phi}^2\gamma^{s-1}\langle D\tilde{\gamma}\circ f, \nu\rangle  \diff \mu \leq C\frac{\abs{\lambda}^{\frac{4}{3}}}{\abs{\CalV(f)}^{\frac{4}{3}}} \int\abs{\phi}^2\gamma^s \diff \mu + C \int\abs{\phi}^2\gamma^{s-4}\diff \mu.
			\end{align}
		
		%% BIS HIER %%%
			Choosing $\varepsilon>0$ sufficiently small and absorbing, by %\eqref{eq:nabla^mApartial_tgamma0},
			\Cref{lem:KSGFL3.2},
			 \eqref{eq:nabla^mApartial ohne dtgamma} and \eqref{eq:nabla^mApartial_tgamma}
			\begin{align}
					&\frac{\diff}{\diff t} \int |\phi|^2\gamma^s\diff \mu + \frac{3}{4}\int \abs{\nabla^2 \phi}\gamma^s\diff \mu \\
					&\quad\leq  \int \left(P^{m+2}_3(A)+P^m_5(A)\right)*\phi\gamma^s\diff \mu +
					\int \abs{\phi}^2\gamma^{s-1}\langle D\tilde{\gamma}\circ f, \nu\rangle \left(-{\Delta H} - |A^0|^2 H \right)\diff \mu\\
					&\qquad + C\left(\frac{\lambda^2}{\CalA(f)^2}+ \frac{\abs{\lambda}^{\frac{4}{3}}}{\abs{\CalV(f)}^{\frac{4}{3}}}+\norm{A}{L^{\infty}([\gamma>0])}^4\right) \left( \int\abs{\phi}^2\gamma^s\diff \mu +\int_{[\gamma>0]} \abs{A}^2\diff\mu\right)
					%  +C \left(\frac{\lambda^2}{\CalA(f)^2}+\frac{\abs{\lambda}^{\frac{4}{3}}}{\abs{\CalV(f)}^{\frac{4}{3}}}+\norm{A}{L^{\infty}([\gamma>0])}^4\right)\int_{[\gamma>0]} \abs{A}^2\diff\mu
					\\
					&\qquad + C\int \abs{\phi}^2\gamma^{s-4}+ \int\abs{\phi}^2\gamma^{s-4}\left(\abs{\nabla \gamma}^4 + \gamma^2\abs{\nabla^2\gamma}^2\right)\diff \mu ,\label{eq:D6}
			\end{align}
			where \FFF we \EEE used Young's inequality to obtain the correct powers of $\lambda$ and $\norm{A}{L^{\infty}([\gamma>0])}$.
			Now, all the terms involving $\lambda$ on the right hand side of \eqref{eq:D6} are as in the statement.  For the second {and the last} term in \eqref{eq:D6}, one may proceed exactly as in the proof of \cite[Proposition 3.3]{KSGF}. This way, one creates additional terms which can be estimated by
			\begin{align}
				&\int\abs{\phi}^2\gamma^{s-4}\diff \mu + \int\abs{\nabla\phi}^2\gamma^{s-2}\diff \mu\leq \varepsilon \int|\nabla^2\phi|^2\gamma^s\diff \mu + C_\varepsilon \int_{[\gamma>0]}\abs{A}^2\gamma^{s-4-2m}\diff \mu,
			\end{align}
			for every $\varepsilon>0$, using twice the interpolation inequality \cite[Corollary 5.3]{KSGF} (which trivially also holds in the case $k=m=0$). The first term on the right hand side of \eqref{eq:D6} can then be estimated by means of \cite[(4.15)]{KSGF}. After choosing $\varepsilon>0$ small enough and absorbing, the claim follows since $s\geq 2m+4$ and $\gamma\leq 1$.
		}
	\end{proof}

\section*{Acknowledgments}
This work was supported by the German Research Foundation (DFG) under Grant 404870139 and the Austrian Science Fund (FWF) under Grant P 32788-N. 
The author would like to thank Anna Dall’Acqua for helpful discussions and comments. \FFF In addition, the author is
grateful to the referees for their careful reading and their valuable comments on the
original manuscript.\EEE

\bibliography{Lib}
\bibliographystyle{abbrv}

\end{document}